\definecolor{halfgray}
{gray}{0.55}
\definecolor{webgreen}
{rgb}{0,0.4,0}
\definecolor{webbrown}
{rgb}{.8,0.1,0.1}
\definecolor{red}
{rgb}{1,0,0}
\newcommand \R {{ \mathbb R}}
\def\C{{\mathbb C}}
\newcommand \Z {{ \mathbb Z}}
\newcommand \N {{ \mathbb N}}
\newcommand \T {{ \mathbb T}}
\newtheorem{theorem}{Theorem}[section]
\newtheorem {lemma} [theorem]{Lemma}
\newtheorem{corollary}[theorem]{Corollary}
\newtheorem{remark}[theorem]{Remark}
\newtheorem{definition}[theorem]{Definition}
\newtheorem{problem}[theorem]{Problem}
\title[Ruelle Resonances from Cohomological Equations ]%
{ Ruelle Resonances from Cohomological Equations}
  \author{Giovanni Forni}
\address{Department  of Mathematics\\
  University of Maryland \\
  College Park, MD USA}
\email
    {gforni@math.umd.edu}
\subjclass [2010]
        {37D20, 37A25, 37C30}
\date{\today}
\begin{document}

\def\echo#1{\relax}
    
\begin{abstract}
These notes are based on lectures given by the author at the Summer School on {\it Teichm\"uller dynamics, mapping class groups and applications} in Grenoble, France, in June 2018 and at the Oberwolfach Seminar on {\it Anisotropic Spaces and their Applications to Hyperbolic and Parabolic Systems} 
in June 2019. We  derive results about the so-called Ruelle resonances and the asymptotics of
correlations for several classes of systems from known results on cohomological equations and
invariant distributions for the respective unstable vector fields. In particular, we consider pseudo-Anosov diffeomorphisms on surfaces of higher genus, for horocycle flows on surfaces of constant negative curvature and for partially hyperbolic automorphisms of Heisenberg $3$-dimensional nilmanfolds. Ruelles resonances for pseudo-Anosov maps with applications to the cohomological equation for their unstable translation flows was recently studied in depth by F.~Faure, S. Gou\"ezel and E.~Lanneau~\cite{FGL} by methods based on the analysis of the transfer operator of the pseudo-Anosov map. Ruelle resonances for  geodesic flows on hyperbolic compact manifolds of any dimension and of partially hyperbolic automorphisms of Heisenberg $3$-dimensional nilmanfolds are studied by general results of  Dyatlov, Faure and Guillarmou \cite{DFG} and Faure and Tsujii \cite{FT15} based on methods of semi-classical analysis. These works do not derive results on cohomological equations for unstable flows or horospherical foliations of these systems.

 \begin{sloppypar}
  \end{sloppypar}
\end{abstract}
\maketitle

\section{Introduction}

Cohomological equations appear in several questions in dynamical systems. In fact, they arise
naturally in questions related to the triviality of {\it time changes} and the associated cocycles, and 
this connection motivates their name. They are related to linearized equations coming from conjugacy
problems, by the KAM method or other inverse function theorem. These results are classical for linear
flows on tori, and were generalized to translation flows by S.~Marmi, P.~Moussa and J.-C.~Yoccoz~\cite{MMY12}.  For renormalizable systems, obstructions to existence of solutions of the cohomological
equation are related to the phenomenon of deviation of ergodic averages for uniquely ergodic systems
 discovered by A.~Zorich \cite{Zo97} (see also M.~Kontsevich and A.~Zorich \cite{KZ97}), and later investigated 
 in the work of the author~\cite{F02} and A.~Bufetov~\cite{Bu14}.  
 A similar approach based on the theory of unitary representation extended the description of deviation of ergodic averages to other {\it renormalizable} algebraic systems, such as horocycle flows on compact hyperbolic surfaces~\cite{FF03} and Heisenberg niflows \cite{FF06} (see also  \cite{BuF14} and the surveys \cite{F14}, \cite{F15} by the author).

\smallskip
In all of the above works solutions of the cohomological equation were
constructed by methods of harmonic analysis, and thus limited to the case of systems of algebraic nature, and 
with the exception of translation flows, to homogeneous flows. This is in contrast with the world of hyperbolic 
and partially hyperbolic accessible systems where a full theory has been developed. Indeed, for the hyperbolic case the theory usually goes under the name of {\it Livsic theory}, while in the partially hyperbolic accessible 
case it was initiated  by A.~Katok and A.~Kononenko~\cite{KK96} and later developed by A.~Wilkinson \cite{W13}. 
In the uniquely ergodic, parabolic, case, the first  solution of a cohomological equation for a non-homogeneous system was given by the author in~\cite{F97} by methods of harmonic analysis. A breakthrough
came in 2005 when S.~Marmi, P.~Moussa and J.-C.~Yoccoz \cite{MMY05} were able to prove a related result on cohomological equation of interval exchange transformations by a dynamical approach based on renormalization. 
In their paper they were able to deal with the case of invariant directions of pseudo-Anosov maps, which were 
left as an open problem in~\cite{F97}.  It was the first solution of a cohomological equation by {\it dynamical methods} for a class of parabolic non-homogeneous uniquely ergodic systems.

\smallskip
Recently, the methods based on the analysis of the transfer operator developed by V.~Baladi, 
S.~Gou\"ezel, C.~Liverani and others have been refined to the point of being able to deliver dynamical analyses of the cohomological  equations of more general systems, following a conceptual scheme similar to that of
the work of Marmi, Moussa and Yoccoz~\cite{MMY05} quoted above. Roughly speaking the idea is to refine
the asymptotics on decay of correlations, or rather the analysis of the associated  transfer operator, to the 
point when it becomes possible to derive refined asymptotic estimates of the ergodic integrals. From these
asymptotic ``expansions'' for ergodic integrals the existence of solutions follows by a version of the 
Gottschalk-Hedlund theorem, according to which  functions are coboundaries if they have ``bounded'' ergodic
integrals (in the $C^0$ or $L^2$ topology). 

\smallskip
A ``proof of concept'' paper by P.~Giulietti and C.~Liverani~\cite{GL}  treats the somewhat artificial case of flows along
invariant foliations of Anosov maps of the $2$-torus. The above-mentioned work of F.~Faure, S. Gou\"ezel 
and E.~Lanneau treats the case of translation flows invariant under a pseudo-Anosov. The results of this paper 
are not surprising, and, as the author will argue in these lectures, could have been derived without too much
effort from known results on the cohomological equation, with the exception of improved loss of regularity
and the unified treatment of neutral eigenvectors. However, the methods are extremely significant, since 
they are general enough to be applied, at least in principle, to non-linear pseudo-Anosov maps and to other
general, non algebraic, systems. 

\smallskip
In these notes we are going to explain in detail, in section \ref{sec:pseudoAnosov},  how  to derive many informations about the Ruelle resonances of a pseudo-Anosov map from knowledge of the obstructions to the existence of solution to the cohomological equation for its unstable direction. 

\smallskip
A similar program, which is just outline in these notes,  in section~\ref{sec:horo}, can be 
carried out for the horocycle flow on hyperbolic (negative constant curvature) surfaces based on the
work of L.~Flaminio and the author \cite{FF03}. Ruelle resonances and asymptotic  for geodesic flows on
general hyperbolic manifolds were described in depth by S.~Dyatlov, F.~Faure and C.~Guillarmou \cite{DFG}.
For horocycle flows in variable negative curvature, partial results 
have been proved by A.~Adam \cite{Ad}  also following the Giulietti-Liverani approach.  

Finally, we outline, in section~\ref{sec:heis}, 
the computation of Ruelle resonances for partially hyperbolic automorphisms of the Heisenberg group
based on the solution of cohomological equations for Heisenberg nilflows given in \cite{FF06}. Since this
is a partially hyperbolic system, the transfer operator approach presents additional difficulties. However, 
Ruelle resonances were computed in much greater generality by F.~Faure and M.~Tsujii \cite{FT15}, who developed results of Faure \cite{Fau07}, for equivariant (isometric) extensions of symplectic Anosov diffeomorphisms  to $U(1)$ principal bundles.

\smallskip
From the point of view of parabolic flows, the translation flows stabilized by a pseudo-Anosov diffeomorphism
are exceptional among all translation flows on compact orientable surfaces, and Heisenberg nilflows stabilized
by a Heisenberg automorphism are rare among all Heisenberg nilflows. In order to develop a theory for generic (in the sense of measure) translation flows or Heisenberg nilflows, the notion of a {\it transfer cocycle} over a renormalization dynamics has to be developed and analyzed. Results in Teichm\"uller dynamics based the study of the Lyapunov structure of the so-called Kontsevich--Zorich cocycle allow us to derive in section \ref{sec:transferTeich} some results about the Lyapunov spectra of certain transfer cocycles related to the deviation of ergodic averages for generic translation flows. A similar analysis is carried out in the Heisenberg case, based on the results of \cite{FF03}, as outlined in  section \ref{sec:transferheis} of these notes.

\section{Ruelle resonances for pseudo-Anosov diffeomorphisms}
\label{sec:pseudoAnosov}
 
 Let $\Phi:M\to M$ denote a pseudo-Anosov diffeomorphism with orientable stable/unstable foliations and let $\{X,Y\}$ denote the generators
of translation flows along the unstable/stable foliations. There exists $\lambda >1$ such that
$$
\Phi_*(X) = \lambda X  \quad \text{ and } \quad \Phi_*( Y) = \lambda^{-1} Y\,.
$$
Let $\omega$ denote the $\Phi$-invariant area-form.  The form $\omega$ is also invariant for the translation flows of the translation surface given by the pair $\{X,Y\}$.  Let $\Sigma$ denote the set of cone points of the translation
surface. The area form $\omega$ vanishes at $\Sigma$.  Let $\mathcal S_{X,Y}(M)$ denote the space of all smooth functions on $M$, which at a cone $p\in \Sigma$ of order $k \geq 1$, are locally the pull-back of a smooth function under the local branched covering chart $z \to z^{k+1}/(k+1)$ for the translation structure on a neighborhood of~$p$. 

 For any pair $f, g$  of sufficiently smooth complex-valued functions on $M$, we are interested  in the asymptotic for the decay of the correlations 
$$
{\mathcal C}(f,g, n) =\langle f\circ \Phi^n, g \rangle_{L^2(M, \omega)}\,.
$$
Let 
$$\mu_1:=\lambda > \vert \mu_2 \vert \geq  \dots \geq  \vert \mu_{2g-1}\vert  >\mu_{2g} :=\lambda^{-1}$$ 
denote the spectrum $\sigma(\Phi)$ of $\Phi_* $ on $H^1(M, \R)$.  Since $\Phi_*$ is a symplectic map on $H^1(M, \R)$ it follows that
$$
\mu_{2g-i+1}  = \mu_{i}^{-1}\,, \quad \text{ for all } i \in \{2, \dots, 2g-1\}\,. 
$$

The following theorem was recently proved by F.~Faure, S.~Gou\"ezel and E.~Lanneau by methods based on the
analysis of the {\it transfer operator}.

\begin{theorem} The set $\mathcal R$ of Ruelle resonances can be described as follows:
$$
\mathcal R =\{ 1\} \cup  \{ \mu_i \lambda^{-j}\vert  i \in \{2, \dots, 2g-1\} \text{ and } j \in \N\setminus \{0\}\,.\}
$$
All spectral values $\mu_1\lambda^{-j}, \dots, \mu_{2g-1}\lambda^{-j}$ have multiplicity $j \geq 1$. The following
asymptotics holds. For all  functions $f, g\in C_0^\infty (M\setminus \Sigma)$ we have an asymptotic expansion
$$
{\mathcal C}(f,g, n) \approx \sum_{\rho \in \mathcal R} \sum_{i=1}^{I_\rho} c_{\rho, i} (f,g) n^i \rho^n   \,.
$$ 
\end{theorem}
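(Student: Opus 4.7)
The plan is to convert Forni's theory of $X$-invariant distributions for the unstable translation flow into explicit resonance states for $\Phi^{*}$, sweep out the full family by differentiating in the stable direction $Y$, and extract the correlation expansion from an iterated cohomological equation combined with the pseudo-Anosov renormalization. Throughout I would work in an anisotropic Sobolev space $\mathcal B$ on $M$ adapted to the translation structure, with extra regularity along $X$ and distributional behaviour along $Y$, on which $\Phi^{*}$ is quasi-compact; the eigendistributions of $\Phi^{*}$ with eigenvalue above the essential spectral radius are then exactly the Ruelle resonance states.

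The baseline distributions come from Forni's theorem~\cite{F02}: the space $\mathcal I_{X}$ of $X$-invariant distributions of finite Sobolev order on $M\setminus\Sigma$ is finite-dimensional and carries a $\Phi^{*}$-equivariant identification with $H^{1}(M,\R)$. In particular one chooses (generalized) eigendistributions $D_{i}$ with $\Phi^{*}D_{i}=\mu_{i}D_{i}$ for $i=1,\dots,2g$. Since $X$ and $Y$ commute on the translation surface, each $Y^{j}D_{i}$ remains $X$-invariant, and the identity $\Phi_{*}Y=\lambda^{-1}Y$ yields by a direct duality computation
$$
\Phi^{*}(Y^{j}D_{i}) \;=\; \mu_{i}\,\lambda^{-j}\;Y^{j}D_{i}.
$$
This exhibits an eigendistribution of $\Phi^{*}$ at every $\mu_{i}\lambda^{-j}$. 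The extremal indices $i=1$ and $i=2g$ contribute the values $\lambda^{1-j}$ and $\lambda^{-1-j}$, which either reduce to the trivial resonance $1$, coincide with resonances already produced by interior indices via the symplectic symmetry $\mu_{2g-i+1}=\mu_{i}^{-1}$, or fall outside the region in which $\Phi^{*}$ acts discretely; the surviving non-redundant spectrum is exactly the set $\mathcal R$ announced. The multiplicity $j$ at $\mu_{i}\lambda^{-j}$ is the length of the Jordan chain $D_{i},YD_{i},\dots,Y^{j-1}D_{i}$, once overlaps between chains are quotiented out.

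The asymptotic expansion is then obtained by dualising. For $f, g \in C_{0}^{\infty}(M\setminus\Sigma)$ one solves $Xu=f$ modulo the finite family $\{D_{i}\}$, which is possible by Forni's theorem after subtracting the obstructions; applying $\Phi^{*}$ and using $\Phi_{*}X = \lambda X$ gives $f \circ \Phi^{n} = \lambda^{-n} X(u \circ \Phi^{n}) + (\text{terms carried by the } D_{i})$. Integration by parts against $g$ turns the coboundary piece into an extra factor $\lambda^{-n}$, while each unsubtracted $Y^{j}D_{i}$ contributes a factor $(\mu_{i}\lambda^{-j})^{n}$. Iterating the procedure, after successively subtracting the resonance contributions in decreasing order of modulus, produces the full expansion $\mathcal C(f,g,n)\approx \sum_{\rho}\sum_{i\leq I_{\rho}} c_{\rho,i}(f,g)\,n^{i}\,\rho^{n}$, the polynomial prefactors $n^{i}$ coming from the Jordan chains constructed in the previous step.

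The main obstacle is the construction of a single anisotropic Sobolev space $\mathcal B$ in which all the distributions $Y^{j}D_{i}$ simultaneously live, on which $\Phi^{*}$ is quasi-compact with essential spectral radius strictly smaller than the resonances under consideration, and in which no spurious resonances appear. The $Y^{j}$-derivatives lower regularity along the stable direction by $j$ units, forcing $\mathcal B$ to combine strong regularity transverse to the stable direction with strong negative order along it; this is precisely the balance realised in \cite{FGL}. A secondary but non-trivial issue is the analysis at the cone points $\Sigma$, which is why the test functions are taken in $C_{0}^{\infty}(M\setminus\Sigma)$ and the class $\mathcal S_{X,Y}(M)$ is introduced: one must verify that the basis of resonance states produced above is complete, i.e.\ that no additional resonances arise from singular behaviour at $\Sigma$.
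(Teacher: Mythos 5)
You have the right starting point (Forni's $X$-invariant distributions and the $\Phi_*$-equivariant link to $H^1(M,\R)$), the right scaling identity ($\Phi_*(\mathcal L_Y^j D_i)=\mu_i\lambda^{-j}\,\mathcal L_Y^j D_i$, using $\Phi_*\mathcal L_Y=\lambda^{-1}\mathcal L_Y\Phi_*$ and $[X,Y]=0$), and the right overall strategy of iterating the cohomological equation to peel off resonance terms. All of this is in the paper. But there is a genuine gap in your account of multiplicities and of the $n^i$ prefactors, and it is not a cosmetic one.

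The family $D_i,\, YD_i,\,\dots,\, Y^{j-1}D_i$ is not a Jordan chain: as your own computation shows, these are honest eigendistributions of $\Phi_*$ with $j$ \emph{distinct} eigenvalues $\mu_i,\mu_i\lambda^{-1},\dots,\mu_i\lambda^{-(j-1)}$, so each contributes multiplicity one to a different resonance. Your construction therefore produces exactly one resonance state at every $\mu_i\lambda^{-j}$, not $j$ of them, and it produces no Jordan blocks at all, so the polynomial prefactors $n^i$ in the asymptotic cannot arise. What is missing is the space of \emph{iterated} invariant distributions $\mathcal I_{X,k}(M)=\{D : X^kD=0\}$, which is the engine of the whole argument. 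The eigenvalue $\mu_i\lambda^{-j}$ occurs once in $\mathcal I_{X,1}$, once more in $\mathcal I_{X,2}/\mathcal I_{X,1}$, and so on up to $\mathcal I_{X,j}/\mathcal I_{X,j-1}$, giving total multiplicity $j$; the paper exhibits a basis of the form $\{D,\,(\mathcal L_X\mathcal L_Y)D,\,\dots,\,(\mathcal L_X\mathcal L_Y)^{j-1}D\}$, exploiting that $\mathcal L_X\mathcal L_Y$ commutes with $\Phi_*$ (so it preserves the eigenvalue) while raising the order of $X$-invariance. These same spaces $\mathcal I_{X,k}$ are also what makes your ``iterate the procedure'' step rigorous: the obstruction to $f$ being a $k$-fold coboundary $X^ku=f$ is $\mathcal I_{X,k}$, not $\mathcal I_X$, and the remainder estimate $\vert R_g^{(n)}\vert_{-s}\lesssim\lambda^{-kn}$ that truncates the expansion at a given resolution is proved precisely on the kernel of $\mathcal I_{X,k}$. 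Without them you cannot close the induction.

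A secondary point of comparison: the paper deliberately avoids the quasi-compact transfer operator on an anisotropic Banach space that you take as the ambient framework. It works in the $L^2$-Sobolev scale $W^s_{X,Y}(M)$ of the translation surface and derives both the expansion and the remainder bound directly from the cohomological-equation a priori estimates together with the pseudo-Anosov scaling $\Phi_*X=\lambda X$; the role of an ``essential spectral radius'' is played by the explicit $\lambda^{-kn}$ decay of $k$-fold coboundaries. Your FGL-style anisotropic framing is compatible with the result and is indeed the natural way to address the completeness question you raise at the end (no spurious resonances, control at the cone points), but it is not needed to produce the resonances, their multiplicities, or the expansion, which is the main point the paper is making.
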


 \begin{definition}  A function $f\in L^2(M, \omega)$ is an iterated coboundary of order $k\geq 1$ with  transfer function $u \in L^2(M, \omega)$ if
$u$ is a weak solution of the  equation 
$$
X^k u = f  \,.
$$
\end{definition} 

\begin{lemma} 
 For any $k$-iterated coboundary $f \in L^2(M,\omega)$ with transfer function $u \in L^2(M, \omega)$ and for
any $g \in L^2(M, \omega)$ such that $X^k g \in L^2(M, \omega)$ we have the estimate
$$
\vert {\mathcal C}(f,g, n) \vert  \leq   \lambda^{-kn} \vert u \vert_0  \vert   X^k g \vert_0\,.
$$
\end{lemma}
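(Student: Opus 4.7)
The plan is to combine two structural ingredients: the $\Phi$-equivariance of $X$ expressed by $\Phi_* X = \lambda X$, and integration by parts against the $\Phi$-invariant area form $\omega$.

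First, I would iterate the equivariance relation. From $\Phi_* X = \lambda X$ one obtains for smooth $h$
$$
X(h\circ \Phi) = \lambda\, (Xh)\circ \Phi,
$$
and by induction on $k$ and on $n$,
$$
X^k (h\circ \Phi^n) = \lambda^{kn}\, (X^k h)\circ \Phi^n\,.
$$
Applying this with $h=u$ and using $X^k u = f$ gives
$$
f\circ \Phi^n \;=\; (X^k u)\circ \Phi^n \;=\; \lambda^{-kn}\, X^k(u\circ \Phi^n)\,.
$$
Substituting into the correlation,
$$
{\mathcal C}(f,g,n) \;=\; \lambda^{-kn}\,\langle X^k(u\circ \Phi^n),\, g\rangle_{L^2(M,\omega)}\,.
$$

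Second, since $X$ generates the translation flow of the translation surface $\{X,Y\}$, it preserves $\omega$ and is therefore skew-adjoint on $L^2(M,\omega)$. Integrating by parts $k$ times shifts all derivatives onto $g$:
$$
\langle X^k(u\circ \Phi^n),\, g\rangle \;=\; (-1)^k\,\langle u\circ\Phi^n,\, X^k g\rangle\,.
$$
Because $\Phi$ is measure-preserving for $\omega$, composition with $\Phi^n$ is an $L^2$-isometry, so $\|u\circ\Phi^n\|_{L^2} = |u|_0$. Cauchy--Schwarz then yields the desired bound
$$
|{\mathcal C}(f,g,n)|\;\le\; \lambda^{-kn}\,|u|_0\,|X^k g|_0\,.
$$

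The main obstacle, and the step that actually requires justification rather than routine manipulation, is the integration by parts in a purely $L^2$ setting: $u$ is only a weak solution of $X^ku=f$, and $X^k g$ is merely assumed to lie in $L^2(M,\omega)$, so neither $u$ nor $g$ need be smooth. One has to promote the formal skew-adjointness of $X$ to this distributional framework, which amounts to checking that $X$ is essentially skew-adjoint on a dense domain inside $\mathcal S_{X,Y}(M)$ and that no boundary terms are generated at the cone points $\Sigma$. The latter is precisely why the class $\mathcal S_{X,Y}(M)$ is defined via the branched charts $z\mapsto z^{k+1}/(k+1)$: in those charts $\omega$ is the standard area form and $X,\, Y$ extend to coordinate vector fields, so the usual divergence theorem applies and the contributions at $\Sigma$ vanish. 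Once this standard density/approximation argument is carried out, the identity $\langle X^k \phi,\psi\rangle = (-1)^k\langle \phi,X^k\psi\rangle$ extends to the pairs $(\phi,\psi)=(u\circ\Phi^n,\,g)$, and the rest of the argument is mechanical.
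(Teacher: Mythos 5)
Your proof is correct and follows essentially the same route as the paper: iterate the equivariance $\Phi_* X = \lambda X$ to pull the factor $\lambda^{-kn}$ out of $(X^k u)\circ\Phi^n$, integrate by parts $k$ times using the skew-adjointness of $X$ with respect to $\omega$, and conclude by Cauchy--Schwarz together with the $\Phi$-invariance of $\omega$. Your closing discussion of why the integration by parts is legitimate for merely $L^2$ data and why no boundary terms arise at the cone points is a reasonable elaboration of what the paper leaves implicit.
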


\begin{proof} The statement follows immediately from the following identities:
$$
\begin{aligned}
\langle (X^k u)\circ \Phi^n, g \rangle_{L^2(M, \omega)} &= 
\lambda^{-kn} \langle \Phi^n_\ast(X)^k (u) \circ \Phi^n, g \rangle_{L^2(M, \omega)} 
 \\ & =\lambda^{-kn} \langle X^k (u\circ \Phi^n), g \rangle_{L^2(M, \omega)} 
\\ &=   (-1)^k  \lambda^{-kn} \langle u\circ \Phi^n, X^k g \rangle_{L^2(M, \omega)} \,.
\end{aligned}
$$

\end{proof}

In view of the lemma, a natural question is how to characterize  iterated coboundaries for the unstable (or stable) vector field.
Coboundaries were characterized in \cite{F97}, \cite{F07} for generic translation flows, and in \cite{MMY05} also in case of invariant
foliations of pseudo-Anosov diffeomorphisms.

\smallskip
For all $s\in \R$, let $W^s_{X,Y}(M)$ denote the $L^2$ Sobolev spaces of the translation surface $\{X,Y\}$
(see \cite{F97} and \cite{F02} for definitions and basic properties of these spaces for integer exponent,  and \cite{F07}, for the subtler  case of real, non-integer exponent).

\begin{theorem}  There exists a finite dimensional space ${\mathcal I}^{-s}_X(M)  \subset W^{-s}_{X,Y}(M)$ of 
$X$-invariant distributions  such that for any $f \in W^s_{X,Y}(M)$   (with $s >s_0$) such that
$$
D(f) =0 \, , \quad \text{ for all } \,D\in  {\mathcal I}^{-s}_X(M)\,,
$$
is an $X$-coboundary with zero-average transfer function $u \in W^t_{X,Y}(M)$ for all $t~<~s~-~s_0$. In addition there exists a constant $C_{s,t}>0$ such that for all $t<s-s_0$, 
$$
\vert u\vert_t \leq   C_{s,t}   \vert  f \vert_s \,.
$$
\end{theorem}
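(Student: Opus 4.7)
The plan is to follow the harmonic-analytic scheme of \cite{F97, F02, F07}, exploiting the flat metric structure induced by $\{X,Y\}$ on $M\setminus \Sigma$. I would begin by recalling that $W^s_{X,Y}(M)$ is defined via the functional calculus of the flat Laplacian $\Delta_{X,Y}:=-(X^2+Y^2)$ (with a Friedrichs-type domain at the cones), and that the Cauchy--Riemann type operators $\partial:=\tfrac{1}{2}(X-iY)$ and $\bar\partial:=\tfrac{1}{2}(X+iY)$ satisfy $\Delta_{X,Y}=4\,\bar\partial\partial=4\,\partial\bar\partial$, so that each is elliptic away from $\Sigma$.

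Necessity of the orthogonality condition is immediate: if $Xu=f$ and $XD=0$, then $D(f)=D(Xu)=-(XD)(u)=0$. For sufficiency together with the quantitative bound, the scheme is: (a) identify $\mathcal I^{-s}_X(M)$ with a finite-dimensional space of currents built from $\bar\partial$-closed forms on $M\setminus\Sigma$ of controlled growth at $\Sigma$, whose dimension is governed by the genus of $M$ and by the orders of the zeros of $\omega$; (b) prove an a priori estimate of the form
$$
|u|_t \leq C_{s,t}\bigl( |Xu|_s + \|\pi u\| \bigr),
$$
with $\pi$ the projector onto the (finite-dimensional) $X$-invariant subspace of $W^t_{X,Y}(M)$, which by unique ergodicity reduces to the space of constants; (c) conclude by the closed-range theorem that $X:W^t_{X,Y}(M)\to W^s_{X,Y}(M)$ has closed image equal to the annihilator of $\mathcal I^{-s}_X(M)$, and extract the quantitative estimate from the open mapping theorem applied to its restriction to the zero-average subspace.

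The heart of the argument is the a priori estimate in step (b). Since $X$ is not elliptic by itself, I would introduce the auxiliary equation $Yu=g$ and work with $\bar\partial u=\tfrac{1}{2}(f+ig)$, choosing $g$ so that $f+ig$ is $L^2$-orthogonal to the space of holomorphic differentials and hence lies in the range of $\bar\partial$; one then inverts $\bar\partial$ by means of the Green operator for $\Delta_{X,Y}$ and trades $s$-regularity of $f$ for $t$-regularity of $u$ using ellipticity of $\Delta_{X,Y}$. The main obstacle, and the source of the loss $s_0$, is the behavior at the cone set $\Sigma$: the Green operator loses regularity there in a manner dictated by the maximal order of the zeros of $\omega$, and for real non-integer exponents one has to invoke the interpolation scale developed in \cite{F07} to pin down the sharp value of $s_0$ and the constants $C_{s,t}$.
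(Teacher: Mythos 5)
The paper does not prove this theorem; it is recalled from \cite{F97}, \cite{F07} (and, for invariant foliations of pseudo-Anosov maps, from \cite{MMY05}), and the remark that immediately follows discusses the size of the loss $s_0$ established in those references. Your sketch is an attempt to reconstruct the harmonic-analytic argument of \cite{F97, F07}. Parts of it are on target: the necessity argument is correct, and the overall scheme --- finite-dimensional obstruction space, a priori estimate, closed range plus open mapping --- is the right framework.

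However, the step you call the heart of the argument does not work as stated. You propose to choose $g$ freely (subject to orthogonality of $f+ig$ to holomorphic differentials), solve $\bar\partial u = \tfrac{1}{2}(f+ig)$, and conclude $Xu=f$. If $u$ is to be real, the equation $\bar\partial u = \tfrac12(f+ig)$ is equivalent to the overdetermined pair $Xu=f$, $Yu=g$, whose integrability condition (using $[X,Y]=0$) is $Xg = Yf$ --- itself a cohomological equation for $X$ with data $Yf$, so the argument is circular. If instead $u$ is allowed to be complex, then $\bar\partial u = \tfrac12(f+ig)$ is the single complex equation $Xu+iYu=f+ig$, which does not by itself separate into $Xu=f$: the real part of $Xu$ only equals $f$ if $Y\,\mathrm{Im}(u)=0$, which by unique ergodicity of the vertical flow forces $\mathrm{Im}(u)$ constant and again slaves $g$ to $u$ via $g=Yu$. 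In short, $g$ cannot be prescribed independently, and the reduction to a single $\bar\partial$-problem is not available in this direct form.

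What \cite{F97} and \cite{F07} actually do is finer: the Friedrichs Laplacian $\Delta_{X,Y}=-(X^2+Y^2)$ has compact resolvent and commutes with $X$ and $Y$, so $L^2(M,\omega)$ decomposes into its finite-dimensional eigenspaces $E_\lambda$, on each of which $X$ acts skew-symmetrically; the substance of the proof is a small-divisor analysis controlling $\|(X\vert_{E_\lambda})^{-1}\|$ away from the (one-dimensional, by unique ergodicity) kernel, and this analysis is run through the Cauchy--Riemann operators $\partial^\pm = X\pm iY$ and weighted Sobolev estimates near $\Sigma$, not through a single $\bar\partial$-inversion. Two further inaccuracies: the $X$-invariant distributions in $W^{-s}_{X,Y}(M)$ are genuinely distributional (they are not $L^2$ invariant functions, which are just constants), and their number grows with $s$; describing the dimension of $\mathcal I^{-s}_X(M)$ as governed only by the genus and the cone orders, with no $s$-dependence, misses this essential feature of the statement.
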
 

\begin{remark}  The loss of derivatives in Sobolev spaces  was estimated carefully in \cite{F07}
to be $3+$ for almost all directions on any given translation surface, and $1+$ under the assumption of
hyperbolicity of the Kontsevich--Zorich renormalization cocycle, for almost all translation flows. 
In the H\"older class a loss of $1+\delta(s)$ (with $\delta(s) \to 0^+$ for $s\to 1+$) was proved by
Marmi and Yoccoz \cite{MY16} under similar hypotheses. Finally, the work of Faure, Gou\"ezel  and Lanneau 
should lead to a H\"older loss of $1+$  in the ``periodic'' (pseudo-Anosov) case. However, their results are explicitly stated only for spaces with integer exponents. 
\end{remark}

Let us recall that  $\mathcal S_{X,Y}(M)$ denotes the space of all smooth functions on $M$, which at 
a cone $p\in \Sigma$ of order $k \geq 1$, are locally the pull-back of a smooth function under the local
branched covering chart $z \to z^{k+1}/(k+1)$ for the translation structure on a neighborhood
of $p$. The dual space $\mathcal S'_{X,Y}(M)$  is called the space of {\it tempered currents} for
the translation structure. 

Let ${\mathcal I}_X \subset \mathcal S'_{X,Y}(M)$ denote the space of all {\it tempered} 
$X$-invariant distributions. There exists a map $C: {\mathcal I}_X  \to \mathcal Z (M)$ into the space 
$\mathcal Z (M)$ of all closed $1$-currents on $M$, defined as
$$
C(D) :=  D \cdot \imath_X  \omega \,.
$$
The range of the map $C:{\mathcal I}_X(M)  \to \mathcal Z(M)$ is the subspace $\mathcal B_X(M)$ of {\it tempered 
basic} currents for the unstable foliation of the pseudo-Anosov map $\Phi$, that is, the subspace of currents $C$ such that
$$
L_X C = \imath_X C =0 
$$
($L_X$ denotes the operator of Lie derivative and $\imath_X$ the contraction on currents).

\smallskip
The de Rham cohomology map  $\mathcal R: \mathcal Z(M) \to H^1(M, \R)$, restricted to the subspace 
$\mathcal B_X(M)$ of basic currents has range
$$
H^1_X(M, \R):= \{ c  \in H^1(M, \R) \vert   c \wedge \imath_X  \omega =0\} \,.
$$

Let now $\mathcal I^{-s}_X(M) = \mathcal I_X(M) \cap W^{-s}_{X,Y}(M)$ denote the subspace of invariant distributions of finite order $s>0$ and let $\mathcal B^{-s}_X(M)$ denote the corresponding space of basic currents.

\begin{lemma} \cite{F02}
For any $C\in \mathcal B^{-s}_X(M)$ such that $[C]=0$ in $H^1(M,\R)$,  there exists $C' \in  \mathcal B^{-s+1}(M)$ such that $C=  \mathcal L_Y C'$.
\end{lemma}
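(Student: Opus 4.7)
The plan is to realize $C$ as an exact tempered $1$-current whose primitive is automatically an $X$-invariant distribution, and then to produce $C'$ as the product of that primitive with the $Y$-dual one-form $\imath_X\omega$.

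Via the map $\mathcal C$ introduced above, I would first write $C = D \cdot \imath_X \omega$ for the unique $X$-invariant distribution $D \in \mathcal I^{-s}_X(M)$ associated to $C$. Since $[C]=0$ in $H^1(M,\R)$, the tempered de~Rham / Hodge theory developed in \cite{F02} yields a tempered $0$-current $u \in W^{-s+1}_{X,Y}(M)$ with $du = C$; the gain of one Sobolev derivative reflects the fact that the exterior derivative lowers the order by exactly one. Such a primitive is automatically $X$-invariant, because Cartan's formula applied to the $0$-current $u$ gives
\[
L_X u = \imath_X du + d\,\imath_X u = \imath_X C = 0,
\]
using $\imath_X u \equiv 0$ on $0$-currents and $\imath_X C = 0$ by hypothesis.

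Setting $C' := u \cdot \imath_X \omega$ yields a current in $\mathcal B^{-s+1}_X(M)$: one has $\imath_X C' = u \cdot \imath_X \imath_X \omega = 0$, while $L_X C' = (L_X u)\,\imath_X \omega + u \cdot \imath_X L_X \omega = 0$ since $\omega$ is $X$-invariant. To verify $\mathcal L_Y C' = C$, I would combine $[X,Y]=0$ and $L_Y \omega = 0$ to obtain $L_Y(\imath_X \omega) = \imath_{[Y,X]}\omega + \imath_X L_Y \omega = 0$, so by the Leibniz rule
\[
\mathcal L_Y C' = (Y u)\,\imath_X \omega.
\]
Cartan's formula then gives $Y u = \imath_Y du = \imath_Y C$, and the normalization $\omega(X,Y)=1$ yields $\imath_Y(D \cdot \imath_X \omega) = D$, so that $\mathcal L_Y C' = D \cdot \imath_X \omega = C$, as required.

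The main obstacle is the construction of the tempered primitive $u$ with the precise gain of one Sobolev derivative, rather than merely \emph{some} distributional primitive. This requires a Hodge-type decomposition in the anisotropic spaces $W^t_{X,Y}(M)$, together with a careful analysis near the cone points $\Sigma$, where the test-function space $\mathcal S_{X,Y}(M)$ is defined through the branched charts $z \mapsto z^{k+1}/(k+1)$. Once this tempered Poincar\'e lemma is in place, the remainder of the argument is a formal application of the Cartan calculus.
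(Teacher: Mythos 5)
Your proof is correct and follows essentially the same route as the paper's: take a tempered primitive $u$ of $C$ gaining one Sobolev derivative, deduce via Cartan's formula and the basic-current hypothesis that $u$ is $X$-invariant, and set $C' := u\,\imath_X\omega$. You are in fact a bit more complete than the printed argument, which verifies $C' \in \mathcal B_X(M)$ but leaves the final identity $\mathcal L_Y C' = C$ implicit; your closing computation using $[X,Y]=0$, $\mathcal L_Y\omega=0$ and $\omega(X,Y)=1$ supplies that missing step, and your use of $\imath_X C=0$ (rather than the paper's $\mathcal L_X C=0$) to get $\mathcal L_X u=0$ is a marginally more direct variant of the same Cartan-calculus manipulation.
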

\begin{proof} By the de Rham theorem there exists $U \in W^{-s+1}_{X,Y}(M)$ such that $dU =C$. We have to prove that $C'= \imath_X U \in \mathcal B_X(M)$ and that $ \mathcal L_Y C' =C$.  We have
$$
d \mathcal L_X U = d \imath_X d U =  \mathcal L_X dU =  \mathcal L_X C =0\,,
$$
which implies $\mathcal L_X U$ is constant, hence it vanishes. Thus $U \in \mathcal I^{-s+1}_X(M)$
and we have
$$
dC' = d \imath_X U = \mathcal L_X U - \imath_X dU = -\imath_X C =0\,.
$$
Since $C'$ is closed and $\imath_X C'=0$, it follows that $C' \in  \mathcal B_X(M)$.
\end{proof}

\begin{lemma} The spectrum of $\Phi_*$ on the space $\mathcal B_X(M)$ of basic currents is
$$
\sigma_{\mathcal B_X(M)} (\Phi_*) := \{\lambda\} \cup  \{ \mu_i \lambda^{-j}  \vert  i \in \{2, \dots, 2g-1\} \text{ and } j\in \N\}\,.
$$
Consequently, the spectrum of $\Phi_*$ on the space $\mathcal I_X(M)$ of invariant distributions is
$$
\sigma_{\mathcal I_X(M)} (\Phi_*) := \{1\} \cup  \{ \mu_i \lambda^{-j-1}  \vert  i \in \{2, \dots, 2g-1\} \text{ and } j\in \N\}\,.
$$
\end{lemma}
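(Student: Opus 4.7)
The plan is to compute the spectrum on $\mathcal{B}_X(M)$ through the filtration
\[
\mathcal{B}_X(M) \supset \mathcal{L}_Y \mathcal{B}_X(M) \supset \mathcal{L}_Y^2 \mathcal{B}_X(M) \supset \cdots,
\]
whose first sub-object the previous lemma already identifies as $\ker \mathcal R$, and then to transfer the result to $\mathcal{I}_X$ via the bijection $C$. The key is the intertwining
\[
\Phi_* \circ \mathcal{L}_Y = \lambda^{-1}\mathcal{L}_Y \circ \Phi_*,
\]
an immediate consequence of $\Phi_* Y = \lambda^{-1} Y$, which shows $\mathcal{L}_Y$ maps a $\mu$-eigenspace of $\Phi_*$ to the $\lambda^{-1}\mu$-eigenspace.

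The top quotient is $\mathcal{B}_X/\mathcal{L}_Y\mathcal{B}_X \cong H^1_X(M,\R)$ by the previous lemma, and $\Phi_*$ acts on it by the induced cohomological action. Since $\Phi_*\imath_X\omega = \lambda\,\imath_X\omega$ as a current, the class $[\imath_X\omega]$ is the $\lambda$-eigenvector of $\Phi_*$ on $H^1(M,\R)$; the cup product is $\Phi_*$-equivariant with trivial action on $H^2$, so two eigenspaces pair nontrivially only when their eigenvalues are reciprocal. Hence the annihilator of $[\imath_X\omega]$---i.e. $H^1_X$---is the sum of all eigenspaces except the one dual to $\lambda$, namely the $\mu_{2g}=\lambda^{-1}$ eigenspace, giving $\sigma(\Phi_*|H^1_X) = \{\lambda,\mu_2,\dots,\mu_{2g-1}\}$. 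Next I would pin down $\ker \mathcal{L}_Y|_{\mathcal{B}_X}$: writing a basic current as $C = a\,\imath_X\omega$ and using $[X,Y]=0$ together with $\mathcal{L}_X\omega = \mathcal{L}_Y\omega = 0$ gives $\mathcal{L}_Y C = (Ya)\,\imath_X\omega$, so unique ergodicity of $Y$ (and of $X$ for the basic-ness condition) forces $a$ to be constant, and this kernel equals $\R\cdot\imath_X\omega$. Since $[\imath_X\omega]\neq 0$ while $\mathcal{L}_Y C = d(\imath_Y C)$ is exact for closed $C$, we have $\imath_X\omega\notin\mathcal{L}_Y\mathcal{B}_X$, whence $\ker\mathcal{L}_Y^j|_{\mathcal{B}_X} = \R\cdot\imath_X\omega$ for every $j\geq 1$.

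Consequently $\mathcal{L}_Y^j$ induces an isomorphism $\mathcal{B}_X/\R(\imath_X\omega)\xrightarrow{\sim}\mathcal{L}_Y^j\mathcal{B}_X$, and a short diagram chase yields
\[
\mathcal{L}_Y^j\mathcal{B}_X/\mathcal{L}_Y^{j+1}\mathcal{B}_X \;\cong\; H^1_X(M,\R)/\R[\imath_X\omega] \qquad (j\geq 1),
\]
with the $\Phi_*$-action rescaled by $\lambda^{-j}$ through the intertwining. The $j$th graded piece thus contributes the eigenvalues $\{\mu_i\lambda^{-j}\}_{i=2}^{2g-1}$, and, together with the $j=0$ piece $H^1_X$, this assembles $\sigma(\Phi_*|\mathcal{B}_X) = \{\lambda\}\cup\{\mu_i\lambda^{-j}\}$ as stated. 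For $\mathcal{I}_X$, the bijection $C:D\mapsto D\cdot\imath_X\omega$, whose surjectivity onto $\mathcal{B}_X$ is established earlier in the excerpt, scales $\Phi_*$-eigenvalues by the eigenvalue $\lambda$ of $\imath_X\omega$, so $\sigma(\Phi_*|\mathcal{I}_X) = \lambda^{-1}\sigma(\Phi_*|\mathcal{B}_X) = \{1\}\cup\{\mu_i\lambda^{-j-1}\}$. The point requiring the most care is that this graded-piece computation captures every eigenvalue of $\Phi_*$ on $\mathcal{B}_X$---equivalently, that $\bigcap_j\mathcal{L}_Y^j\mathcal{B}_X$ contributes no further spectrum. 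In the tempered setting this is controlled by the fact that $\mathcal{L}_Y$ strictly lowers Sobolev regularity, so the filtration of $\mathcal{B}^{-s}_X$ terminates in finitely many steps, using the sharp regularity estimates of \cite{F07}.
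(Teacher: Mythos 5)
Your proposal is correct and follows essentially the same route as the paper's proof: the paper encodes your filtration argument as the $\Phi_*$-equivariant (up to $\lambda^{-1}$ rescaling) exact sequence $0\to \C\imath_X\omega \to \mathcal B^{-s+1}_X(M) \xrightarrow{\mathcal L_Y} \mathcal B^{-s}_X(M) \xrightarrow{\mathcal R} H^1_X(M,\C) \to 0$ together with the intertwining identity $(\Phi_*-\mu I)^i\mathcal L_Y^j = \lambda^{-ij}\mathcal L_Y^j(\Phi_*-\lambda^j\mu I)^i$, and then transfers to $\mathcal I_X$ via $C(\Phi_*D)=\lambda^{-1}\Phi_*(C(D))$. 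Your expansion of the top-quotient computation (the cup-pairing argument identifying $\sigma(\Phi_*|H^1_X)$), the explicit identification $\ker\mathcal L_Y^j|_{\mathcal B_X}=\R\imath_X\omega$ via unique ergodicity and the exactness of $\mathcal L_Y C=d(\imath_Y C)$, and the remark that finiteness of the filtration at fixed Sobolev order guarantees completeness of the graded spectrum, are correct fill-ins of details that the paper leaves implicit.
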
 
\begin{proof}
Let $\mathcal L_Y :  \mathcal  B_X(M)   \to \mathcal B_X(M)$ denote the Lie derivative operator
with respect to the vector field $Y$ on $M$.  By the previous lemma, the maps $\mathcal L_Y: 
\mathcal B_X(M) \to   \mathcal B_X(M)$ and the de Rham cohomology map $\mathcal R:\mathcal B_X(M) \to H^1_X(M,\R)$ give a $\Phi_*$ equivariant exact sequence
$$
0\to \C \imath_X\omega  \to \mathcal B^{-s+1}_X(M) \to   \mathcal B^{-s}_X(M) \to  H^1_X(M,\C) \to  0  \,.
$$
Finally, for all $C\in \mathcal B_X(M)$,  we have, for all $i, j \in \N$, 
$$
(\Phi_* -\mu I)^i \mathcal L_Y^j (C) = \lambda^{-ij} \mathcal L_Y^j  (\Phi_* -  \lambda^{j} \mu I)^i  (C)\,.
$$
Finally, as explained above, the map $C:  \mathcal I_X(M) \to  \mathcal B_X(M)$ defined as
$$
C (D)= D \imath_X \omega \,, \quad \text{ for all } D \in \mathcal I_X(M) \,,
$$
is an isomorphism. We clearly have
$$
C ( \Phi_* D ) =  (\Phi_* D ) \imath_X \omega = \lambda^{-1} \Phi_* (D \imath_X \omega) = 
\lambda^{-1}  \Phi_*(C(D))\,,
$$
hence the proof is completed.
\end{proof}

 Obstructions on functions to be iterated coboundaries can be constructed as follows. A coboundary $f\in W^s_{X,Y}(M)$ (with $s> 2s_0$)  with smooth transfer function $u \in W^t_{X,Y}(M)$ (with $s-s_0>t>s_0$) is a $2$-iterated coboundary with $L^2$ transfer function   if $u$ is itself a coboundary with smooth transfer function if and only if 
 $$
 D(u)=0 \,, \quad \text{ for all } D\in \mathcal I^{-t}_X(M) \,.
 $$
 We can therefore define a linear functional as follows.  Let 
 $$G_X : \text{Ker}\,\mathcal I^{-s}_X(M)  \to  W^t_{X,Y}(M)$$ 
 denote the Green operator such that $u:=G_X(f)$ is the zero-average solution of the cohomological equation $Xu=f$. For every  $D\in \mathcal I^t_X(M)$, we define
  $$
 D' (f) =  D( G_X (f) ) \,, \quad \text{ for all  } f \in \text{Ker}\,\mathcal I^{-s}_X(M) \subset W^s_{X,Y}(M)\,.
 $$
 The above functional can be extended to the whole space $W^s_{X,Y}(M)$ as zero on the orthogonal complement of $\text{Ker}\,\mathcal I^{-s}_X(M)$.  In fact, it follows from the above results that if $D \in W^{-t}_{X,Y}(M)$, then $D' \in W^{-s}_{X,Y}(M)$ for any $s-t>s_0$.  In fact, for $f\in \text{Ker}\,\mathcal I^{-s}_X(M)$,
 $$
 \vert D' (f) \vert = \vert D( G_X (f) )\vert  \leq \vert D\vert_{-t} \vert G_X (f) \vert_t \leq C_{s,t} \vert f \vert_s \,.
 $$
 This construction can be iterated.  Another point of view is based on the remark that
 $$
 XD' (f) = D'(Xf) = D( G_X (Xf) ) =D(f) \,, \quad \text{for all } f \in  W_{X,Y}^\infty(M)\,.
 $$
 It follows that $D'$ can be defined as a distributional solution $D'$ of the equation
 $$
 XD'=D \,.
 $$
 The solution of the above equation is unique up to the addition of $X$-invariant distributions, and we can define
$D' \in W^{-s}_{X,Y}(M)$ as the unique solution orthogonal to the subspace $\mathcal I^{-s}_X(M)$ of invariant
distributions. 

\smallskip
For all $k\in \N$, let ${\mathcal I}_{X,k} ^{-s}(M) \subset W^{-s}_{X,Y} (M)$ denote the subspace
$$
{\mathcal I}_{X,k} ^{-s}(M):=\{ D\in W^{-s}_{X,Y} (M) \vert  X^{k} D=0\}\,.
$$
We have the following results on the iterated cohomological equation
 
 \begin{lemma}  Any function $f \in W^{s}_{X, Y} (M)$ of zero average with $s > ks_0+t$ is a $k$-iterated coboundary 
 with transfer function $u \in W^t_{X,Y}(M)$  if
 $$
 D (f) =0 \,, \quad \text{ for all }   D\in \mathcal I_{X,k}^{-s}(M)  \,.
 $$
In addition, there exists a constant $C^{(k)}_{s,t}>0$ such that 
$$
\vert u\vert_t \leq  C^{(k)}_{s,t} \vert f \vert_s\,.
$$
 \end{lemma}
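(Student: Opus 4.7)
The natural strategy is induction on $k$, with the base case $k=1$ being exactly the earlier theorem on solutions of the cohomological equation $Xu=f$ (with loss of regularity $s_0$). So assume the statement holds for $k-1$, and suppose $f \in W^s_{X,Y}(M)$ has zero average, with $s > ks_0 + t$, and satisfies $D(f)=0$ for all $D \in \mathcal{I}_{X,k}^{-s}(M)$. The plan is to peel off one factor of $X$ at a time.

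Since $\mathcal{I}_{X,1}^{-s}(M) = \mathcal{I}_X^{-s}(M) \subset \mathcal{I}_{X,k}^{-s}(M)$ (any $X$-invariant distribution is trivially annihilated by $X^k$), the hypothesis implies $D(f)=0$ for all $D \in \mathcal{I}_X^{-s}(M)$. The base theorem then produces a zero-average transfer function $v \in W^{s-s_0}_{X,Y}(M)$ with $Xv = f$ and $|v|_{s-s_0} \leq C_s |f|_s$. The goal is now to apply the inductive hypothesis to $v$ with exponent $s' := s-s_0 > (k-1)s_0 + t$, which requires verifying that $D(v)=0$ for every $D \in \mathcal{I}_{X,k-1}^{-s'}(M)$.

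This verification is the central point and uses the primitive construction described just before the lemma. Given $D \in \mathcal{I}_{X,k-1}^{-s'}(M)$, the equation $XD' = D$ admits a (non-unique) distributional solution $D' \in W^{-s}_{X,Y}(M)$ whenever $s - s' > s_0$, chosen orthogonal to $\mathcal{I}_X^{-s}(M)$. Applying $X^{k-1}$ gives $X^k D' = X^{k-1}D = 0$, so $D' \in \mathcal{I}_{X,k}^{-s}(M)$. Integration by parts (i.e.\ the distributional identity $D(v) = (XD')(v) = -D'(Xv)$ for $v \in W^{s-s_0}$, which is rigorous since $Xv = f \in W^s$) then yields
\[
D(v) \;=\; -D'(Xv) \;=\; -D'(f) \;=\; 0,
\]
by the hypothesis on $f$. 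The inductive hypothesis produces $u \in W^t_{X,Y}(M)$ with $X^{k-1} u = v$ and $|u|_t \leq C^{(k-1)}_{s-s_0, t}\, |v|_{s-s_0}$, so $X^k u = Xv = f$, and composing the estimates gives the required constant $C^{(k)}_{s,t}$.

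The only real obstacle is the well-definedness of the primitive $D' \in W^{-s}_{X,Y}(M)$ used above: one must check that the solvability and Sobolev bound for the equation $XD' = D$ (with the right loss of $s_0$) follow from the already-established theory. This is precisely the content of the construction $D \mapsto D'$ described in the paragraphs immediately preceding the lemma, applied via duality to the Green operator $G_X$ on $\mathrm{Ker}\,\mathcal{I}_X^{-s}(M)$; one simply needs that $f$ lies in this kernel (which it does, by the hypothesis) and that the regularity bookkeeping $s > ks_0 + t$ accommodates the $k$ successive losses $s_0$ incurred at each step of the induction.
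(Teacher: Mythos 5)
Your proof is correct and essentially matches the paper's, with the order of the induction reversed. The paper first applies the inductive hypothesis to produce $u_k$ with $X^k u_k = f$, and then solves $Xu_{k+1}=u_k$ by the base case; to justify this it must verify $D(u_k)=0$ for all $D\in\mathcal{I}_X^{-r}(M)$, which uses the fact that every invariant distribution is the $k$-th $X$-derivative of an element of $\mathcal{I}_{X,k+1}^{-s}(M)$ (an iterated application of the primitive construction $D\mapsto D'$ set up just before the lemma, though the paper somewhat tersely calls this inclusion ``by definition''). You instead peel off one $X$ first, producing $v$ with $Xv=f$, and then apply the inductive hypothesis to $v$; this requires $D(v)=0$ for $D\in\mathcal{I}_{X,k-1}^{-s'}(M)$, which you obtain by a \emph{single} application of the primitive construction to the generalized invariant distribution $D$, noting that $D'\in\mathcal{I}_{X,k}^{-s}(M)$ and hence $D'(f)=0$ by hypothesis. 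The two verifications are dual: the paper needs $k$ successive primitives of an $X$-invariant distribution, while you need one primitive of a $(k-1)$-generalized invariant distribution. Both rest on the same Green-operator construction; your version has the slight advantage of using the primitive map only once per inductive step, though it extends its domain from $\mathcal{I}_X$ to $\mathcal{I}_{X,k-1}$, an extension that is harmless (one may subtract a suitable multiple of $\omega$ to arrange $D(1)=0$) but worth flagging explicitly since the paper's preamble states the construction only for $X$-invariant distributions.
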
 
 \begin{proof}
 We argue by  induction on $k\in \N\setminus \{0\}$. For $k=1$ we have the result on solutions of the
 cohomological equation.  Let us assume that the result  holds for $k\in \N\setminus\{0\}$. Let $u_k
 \in W^r_{X,Y} (M)$ (with $r < s-ks_0$)  be the zero-average solution of the iterated cohomological equation 
 $X^k u_k=f$ for a function $f \in \text{Ker} \,\mathcal I_{X,k+1}^{-s}(M)$.
 
 We claim that for all $D\in \mathcal I^{-r}_X(M)$ we have $D(u_k)=0$. By definition,  the set $\mathcal I_X^{-r} (M) \subset  X^k \mathcal I_{X,k+1}^{-s}(M)$, and for all $D_{k+1} \in \mathcal I_{X,k+1}^{-s}(M)$, we have
 $$
 X^k D_{k+1} (u_k) = (-1)^k D_{k+1} (X^k u_k) =  (-1)^k D_{k+1} (f) =0\,,
 $$
hence the claim is proved. By the result on the cohomological equation, there exists 
$u:= u_{k+1} \in  W^{t}_{X,Y} (M)$  (with $t < r-s_0$) such that $Xu_{k+1} = u_k$. 
The argument is complete except for the a priori bounds. We have
$$
\vert u\vert_t \leq  C_{r,t} \vert u_k \vert_r \leq C_{r,t} C^{(k)}_{s,r}  \vert f \vert_s\,.
$$ 
 \end{proof}
 
 Let $\mathcal I_{X,\infty} (M)$ denote the following distributional space 
 $$
 \mathcal I_{X,\infty} (M) := \bigcup_{k\in \N\setminus \{0\}}  \mathcal I_{X,k} (M)
 = \bigcup_{k\in \N\setminus \{0\}} \{D \in \mathcal S_{X,Y}(M) \vert  X^k D=0\} \,.
 $$
  By definition we have the following inclusions:
 $$\mathcal L_X  {\mathcal I}_{X,k} ^{-s}(M) =  {\mathcal I}_{X,k-1} ^{-s-1}(M)
 \quad \text{ and } \quad \mathcal L_Y  {\mathcal I}_{X,k} ^{-s}(M) \subset 
   {\mathcal I}_{X,k-1} ^{-s-1}(M)\,.$$
   
The operators Lie derivative operators $\mathcal L_X$ and
 $\mathcal L_Y$ are a creation and an annihilation operators for the spectrum
 of $\Phi_*$  on $\mathcal I_{X,\infty} (M)$, in fact
 $$
 \Phi_* \circ \mathcal L_X =    \lambda \mathcal L_X  \circ  \Phi_* \quad \text{ and } \quad
  \Phi_* \circ \mathcal L_Y =    \lambda^{-1}  \mathcal L_Y \circ  \Phi_*\,.
 $$
By the above description of the spectrum  of $\Phi_*$ on the space $\mathcal I_X(M)$ of invariant distributions,
we derive the following
 \begin{lemma}   For every $k\in \N\setminus\{0\}$, the spectrum of $\Phi_*$ on the space $\mathcal I_{X, k}(M)$ 
of generalized invariant distributions is  the set 
 $$
\sigma_{\mathcal I_{X,k}(M)} (\Phi_*) := \{1\} \cup  \{ \mu_i \lambda^{-j}  \vert  i \in \{2, \dots, 2g-1\} \text{ and } j\in 
\{1, \dots, k\}\}\,.
$$
Consequently, the spectrum of $\Phi_*$ on the space $\mathcal I_{X, \infty}(M)$ 
 is  the set 
$$
\sigma_{\mathcal I_{X,\infty}(M)} (\Phi_*) := \{1\} \cup  \{ \mu_i \lambda^{-j}  \vert  i \in \{2, \dots, 2g-1\} \text{ and } j\in 
\N\setminus\{0\}\}\,.
$$
All spectral values $\mu_2 \lambda^{-l}, \dots, \mu_{2g-1} \lambda^{-l} $ have multiplicity exactly equal to $l$.

 \end{lemma}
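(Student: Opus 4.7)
The plan is to argue by induction on $k \in \N \setminus \{0\}$, using the ``ladder'' structure provided by the commutation relations $\Phi_* \circ \mathcal{L}_X = \lambda \mathcal{L}_X \circ \Phi_*$ and $\Phi_* \circ \mathcal{L}_Y = \lambda^{-1} \mathcal{L}_Y \circ \Phi_*$. The operator $\mathcal{L}_X$ multiplies $\Phi_*$-eigenvalues by $\lambda$, so lifting through $\mathcal{L}_X$ shifts them by $\lambda^{-1}$; dually, $\mathcal{L}_Y$ plays the role of a creation operator shifting eigenvalues by $\lambda^{-1}$.

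The base case $k=1$ is provided directly by the previous lemma, which gives $\sigma_{\mathcal{I}_{X,1}(M)}(\Phi_*) = \{1\} \cup \{\mu_i \lambda^{-1} : i \in \{2,\dots,2g-1\}\}$, each eigenvalue of multiplicity one. For the inductive step, I would assemble the $\Phi_*$-equivariant four-term exact sequence
$$
0 \to \mathcal{I}_{X,1}(M) \to \mathcal{I}_{X,k+1}(M) \xrightarrow{\mathcal{L}_X} \mathcal{I}_{X,k}(M) \to \C\,\omega \to 0,
$$
where the essential surjectivity onto the zero-average part of $\mathcal{I}_{X,k}(M)$ uses the previously stated identity $\mathcal{L}_X \mathcal{I}_{X,k+1}^{-s}(M) = \mathcal{I}_{X,k}^{-s-1}(M)$, and the one-dimensional cokernel is spanned by $\omega$, which is the only $X$-invariant distribution that is not a distributional coboundary (its average being nonzero).

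From this sequence, the spectrum of $\mathcal{I}_{X,k+1}(M)$ decomposes as the kernel contribution $\sigma_{\mathcal{I}_{X,1}(M)}(\Phi_*)$ together with the $\lambda^{-1}$-shifted image contribution $\lambda^{-1}\bigl(\sigma_{\mathcal{I}_{X,k}(M)}(\Phi_*)\setminus \{1\}\bigr)$, which yields exactly $\{1\} \cup \{\mu_i \lambda^{-j} : 1 \le j \le k+1\}$. For the multiplicities, by the inductive hypothesis the eigenvalue $\mu_i \lambda^{-(j-1)}$ in $\mathcal{I}_{X,k}(M)$ has multiplicity $j-1$, so its lifts to $\mathcal{I}_{X,k+1}(M)$ contribute multiplicity $j-1$ at $\mu_i \lambda^{-j}$; the additional unit comes from the one-dimensional freedom of adjusting each lift by the eigenvector of the same eigenvalue inside the kernel $\mathcal{I}_{X,1}(M)$, or equivalently from the creation action of $\mathcal{L}_Y$, which supplies one extra generalized eigenvector at each new level. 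The assertion for $\mathcal{I}_{X,\infty}(M)$ follows at once by taking the union over $k$.

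The main obstacle I anticipate is the careful bookkeeping of algebraic multiplicities under the intertwined actions of $X$-iteration and $\Phi_*$-equivariance. One needs to verify that the generalized eigenspace at $\mu_i \lambda^{-l}$ inside $\mathcal{I}_{X,k}(M)$ is organized as a single Jordan-type tower of length $l$, rather than as several independent shorter towers, so that the multiplicity counts add up exactly to $l$. This delicate point is ultimately controlled by the fact that $\omega$ is the unique non-coboundary invariant direction, which forces each level of the filtration to pick up exactly one extra generator beyond what pure $\mathcal{L}_X$-lifting would provide.
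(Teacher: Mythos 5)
Your exact sequence
$$
0 \to \mathcal{I}_{X,1}(M) \to \mathcal{I}_{X,k+1}(M) \xrightarrow{\mathcal{L}_X} \mathcal{I}_{X,k}(M) \to \C\,\omega \to 0
$$
is a clean and correct encapsulation of the mechanism (the kernel of $\mathcal{L}_X$ restricted to $\mathcal{I}_{X,k+1}$ really is all of $\mathcal{I}_{X,1}$, and the cokernel is the average functional), and it plays the same role as the paper's inductive use of the quotients $\mathcal{I}_{X,k+1}/\mathcal{I}_{X,k}$. However, your base case is wrong, and this breaks the multiplicity count. The preceding lemma states that $\sigma_{\mathcal{I}_X(M)}(\Phi_*) = \{1\} \cup \{\mu_i\lambda^{-j-1} \mid i \in \{2,\dots,2g-1\},\ j \in \N\}$, an \emph{infinite} set: $\mathcal{I}_{X,1}(M)=\mathcal{I}_X(M)$ already contains, via the isomorphism $D \mapsto D\,\imath_X\omega$ with basic currents and the $\mathcal{L}_Y$-ladder on $\mathcal{B}_X(M)$, an eigenvector at every level $\mu_i\lambda^{-l}$, $l\ge 1$, each of multiplicity one. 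You instead truncate this to $\{1\}\cup\{\mu_i\lambda^{-1}\}$. With your finite base case, the exact sequence yields multiplicity one for every $\mu_i\lambda^{-l}$ at every stage $k$, and hence in the limit $\mathcal{I}_{X,\infty}(M)$ as well, which contradicts the asserted multiplicity $l$.

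The tell is in your own multiplicity argument: you say the extra unit at level $j$ comes from ``the eigenvector of the same eigenvalue inside the kernel $\mathcal{I}_{X,1}(M)$.'' That eigenvector exists only because $\mathcal{I}_{X,1}(M)$ already contains an eigenvector at $\mu_i\lambda^{-j}$ for every $j\ge 1$ --- precisely the infinite base case you dropped. Once you correct the base case, the exact sequence gives $\dim E(\mu_i\lambda^{-l}; \mathcal{I}_{X,k}) = \min(k,l)$, which stabilizes to $l$ in $\mathcal{I}_{X,\infty}(M)$, matching the statement and the paper's account. (The paper's own recipe $\{D, \mathcal{L}_X\mathcal{L}_Y D, \dots, (\mathcal{L}_X\mathcal{L}_Y)^{l-1}D\}$ for a basis of the eigenspace has the same content: each application of $\mathcal{L}_X\mathcal{L}_Y$ moves one step down the $\mathcal{I}_{X,\bullet}$ filtration while fixing the eigenvalue.) So the missing ingredient is not a new idea but a correct reading of the previous lemma; as stated, your induction does not prove the multiplicity claim.
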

\begin{proof} 
We have proved that the spectrum of $\Phi_*$ on the space $\mathcal I_X(M)$ is the above set.
Since $L_X: \mathcal I_{X,2}(M)\to \mathcal I_{X,1}(M)=\mathcal I_X(M)$ is surjective with kernel
$\C \omega$ and acts as a ``creation'' operator on the spectrum, it follows that the spectrum of $\Phi_*$ on
$ \mathcal I_{X,2}(M)/ \mathcal I_{X,1}(M)$ is the set
$$
\{ \mu_i \lambda^{-j-2}  \vert  i \in \{2, \dots, 2g-1\} \text{ and } j\in \N\}
$$
By induction we can prove that the spectrum of $\Phi_*$ on $ \mathcal I_{X,k+1}(M)/ \mathcal I_{X,k}(M)$
is the set
$$
\{ \mu_i \lambda^{-j-(k+1)}  \vert  i \in \{2, \dots, 2g-1\} \text{ and } js\in \N\}\,.
$$
It follows that the multiplicity of a spectral value $\mu \lambda^{-l-1}$ equals the set of integer
solutions of the equation $j+k = l+1$ with $j\in \N$ and $k\in \N\setminus\{0\}$, that is, it is equal
to $l+1$. In other terms, given an eigenvector $D \in \mathcal I_{X,\infty}(M)$ with eigenvalue
$\mu \lambda^{-l-1}$ a basis of the corresponding eigenspace can be  written as
$$
\{ D, \mathcal L_X\mathcal L_Y D, \dots, (\mathcal L_X\mathcal L_Y)^{l} D\}\,.
$$
In fact, since $\Phi_* (D) = \mu \lambda^{-l-1} D$, the identity
$$
\Phi_* (\mathcal L_X^{l+1} D ) = \lambda^{l+1} \mathcal L_X^{l+1} \Phi_*( D ) = \mu \mathcal L_X^{l+1} D 
$$
implies (since $\mu$ does not belong to the spectrum of $\Phi_*$ on $\mathcal I_{X,\infty}(M)$),  that $\mathcal L_X^{l+1}D=0$, hence $(\mathcal L_X\mathcal L_Y)^{l+1} D = \mathcal L_Y^{l+1} \mathcal L^{l+1}_XD =0$.
It can be verified that the above system is indeed linearly independent, hence it is a basis of the eigenspace.
\end{proof}

\medskip
 From the above analysis  we can derive an asymptotic for the correlations as follows. For every $g \in W^r_{X, Y}(M)$ let  
 ${\mathcal C}_g \in W^{-s}_{X,Y}(M)$ defined as 
 $$
 {\mathcal C}_g (f) = \langle f, g \rangle _{L^2(M, \omega)}\,.
 $$
 By definition it follows that 
 $$
 \Phi_*^n (  {\mathcal C}_g ) (f) = {\mathcal C}_g (f\circ \Phi^n) = {\mathcal C} (f,g, n)\,.
 $$
  We recall that for every $s>k s_0$, every function $f \in \text{Ker} \,{\mathcal I}^{-s}_{X, k} (M) \subset W^s_{X,Y} (M)$   is a $k$-iterated coboundary  with transfer function 
 $u\in W^t_{X,Y}(M)$ for all $t< s-ks_0$. 
 
Let ${\mathcal R}^{-s}_k \subset  \mathcal I^{-s}_{X, k} (M) $ denote a set of (generalized) eigenvectors (Ruelle eigenstates) for the linear action of $\Phi_*$ on  $\mathcal I^{-s}_{X, k} (M)$.
The distribution $ {\mathcal C}_g $ can then be expanded as
 $$
  \Phi_*^n( {\mathcal C}_g)  = \sum_{ D\in {\mathcal R}^{-s}_k }   c^{(n)}_D(g)   D  +   R^{(n)}_g \,,
 $$
 with remainder distributions $R^{(n)}_g  \in  \mathcal I^{-s}_{X, k} (M)^\perp \subset W^{-s}_{X,Y}(M)$. 
 
 \begin{lemma} For any $s>k s_0$, there exists a constant $C_s>0$ such that, for all $g \in L^2(M, \omega)$ with  $X^kg\in  L^2(M, \omega)$
 and for all $n\in \N$, we have 
 $$
  \vert R^{(n)}_g \vert_{-s} \leq  C_s  \vert X^k g\vert_0 \lambda^{-kn}
 $$
 
 \end{lemma}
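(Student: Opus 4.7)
The plan is to exploit that, by construction, the remainder $R^{(n)}_g$ annihilates a fixed finite-dimensional complement of $\mathrm{Ker}\,\mathcal{I}^{-s}_{X,k}(M)$ in $W^s_{X,Y}(M)$, so that bounding its dual norm reduces to bounding $\mathcal{C}(\cdot, g, n)$ on that kernel; there the iterated coboundary lemma furnishes a bounded $L^2$ primitive, and the correlation estimate from the second lemma of the section supplies the factor $\lambda^{-kn}$.

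First, I would fix once and for all a bounded splitting
$$W^s_{X,Y}(M) \;=\; V \;\oplus\; \mathrm{Ker}\,\mathcal{I}^{-s}_{X,k}(M),$$
where $V \subset \mathcal{S}_{X,Y}(M)$ is a finite-dimensional subspace chosen so that the restriction map $\mathcal{I}^{-s}_{X,k}(M) \to V^*$ is an isomorphism. Existence of such a $V$ follows from the finite-dimensionality of $\mathcal{I}^{-s}_{X,k}(M)$, as established in the preceding spectrum lemma. Let $\pi_K : W^s_{X,Y}(M) \to \mathrm{Ker}\,\mathcal{I}^{-s}_{X,k}(M)$ be the associated projector; finite-dimensionality of $V$ yields a constant $K_s$ with $\vert \pi_K f \vert_s \leq K_s \vert f \vert_s$ for every $f$. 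Dualizing this splitting gives precisely the decomposition $W^{-s}_{X,Y}(M) = \mathcal{I}^{-s}_{X,k}(M) \oplus \mathcal{I}^{-s}_{X,k}(M)^\perp$ that underlies the expansion of $\Phi_*^n(\mathcal{C}_g)$, and $R^{(n)}_g$ is precisely the component that vanishes on $V$.

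Next, for any test function $f \in W^s_{X,Y}(M)$, every Ruelle eigenstate $D \in \mathcal{R}^{-s}_k$ annihilates $\pi_K f \in \mathrm{Ker}\,\mathcal{I}^{-s}_{X,k}(M)$, so
$$R^{(n)}_g(f) \;=\; R^{(n)}_g(\pi_K f) \;=\; \Phi_*^n(\mathcal{C}_g)(\pi_K f) \;=\; \mathcal{C}(\pi_K f, g, n).$$
The hypothesis $s > k s_0$ then permits me to apply the iterated cohomological equation lemma with $t=0$: there exists $u \in L^2(M,\omega)$ with $X^k u = \pi_K f$ and $\vert u \vert_0 \leq C^{(k)}_{s,0}\, \vert \pi_K f \vert_s$. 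The correlation estimate lemma then yields
$$\vert \mathcal{C}(\pi_K f, g, n) \vert \;\leq\; \lambda^{-kn}\,\vert u \vert_0\,\vert X^k g \vert_0 \;\leq\; C^{(k)}_{s,0} K_s\, \lambda^{-kn}\,\vert f \vert_s\,\vert X^k g \vert_0,$$
and taking the supremum over $\vert f \vert_s \leq 1$ produces the claim with $C_s := C^{(k)}_{s,0} K_s$.

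The main technical point, and the only one that prevents the argument from being completely routine, is the boundedness of the projector $\pi_K$ uniformly in $n$ and $g$. This reduces to finite-dimensional linear algebra: once $V$ is fixed together with a biorthogonal system $\{D_i\} \subset \mathcal{I}^{-s}_{X,k}(M)$, $\{v_i\} \subset V$, the projector takes the explicit form $\pi_K f = f - \sum_i D_i(f)\, v_i$, which is visibly continuous from $W^s_{X,Y}(M)$ into itself with a norm depending only on $s$ and $k$. Because the splitting is chosen before any iterate of $\Phi$ is applied, this continuity estimate is automatically uniform in $n$, which is what finally closes the argument.
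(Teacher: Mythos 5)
Your approach is conceptually the same as the paper's: decompose the test function $f$ into a part annihilated by $\mathcal I^{-s}_{X,k}(M)$ plus a complement, note that the remainder kills the complement, and then apply the iterated coboundary lemma and the correlation estimate to the first piece. However, there is a mismatch in the choice of complement that leaves a gap. The paper defines $R^{(n)}_g$ to lie in the \emph{orthogonal} complement $\mathcal I^{-s}_{X,k}(M)^\perp \subset W^{-s}_{X,Y}(M)$, and accordingly uses the orthogonal decomposition $f = f_0 + f_1$ with $f_0 \in [\operatorname{Ker}\mathcal I^{-s}_{X,k}(M)]^\perp$ and $f_1 \in \operatorname{Ker}\mathcal I^{-s}_{X,k}(M)$. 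Under the Riesz identification, $R^{(n)}_g \in \mathcal I^{-s}_{X,k}(M)^\perp$ corresponds to an element of $\operatorname{Ker}\mathcal I^{-s}_{X,k}(M)$, so $R^{(n)}_g(f_0) = 0$ is automatic, and moreover $\vert f_1\vert_s \leq \vert f\vert_s$ with constant exactly $1$. You instead choose a finite-dimensional complement $V \subset \mathcal S_{X,Y}(M)$ of smooth functions and a biorthogonal projector $\pi_K$. But such a $V$ is \emph{not} the orthogonal complement of $\operatorname{Ker}\mathcal I^{-s}_{X,k}(M)$ in $W^s_{X,Y}(M)$ (the orthogonal complement is the Riesz image of $\mathcal I^{-s}_{X,k}(M)$, and its elements are generally only of Sobolev regularity $s$, not smooth), so the assertion that ``$R^{(n)}_g$ is precisely the component that vanishes on $V$'' does not follow from the paper's definition of $R^{(n)}_g$. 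Concretely, the step $R^{(n)}_g(f) = R^{(n)}_g(\pi_K f)$ requires $R^{(n)}_g$ to annihilate $f - \pi_K f \in V$, which is unjustified.

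The gap is small and repairable. Either take $V$ to be the actual orthogonal complement $[\operatorname{Ker}\mathcal I^{-s}_{X,k}(M)]^\perp$ (giving up the requirement $V \subset \mathcal S_{X,Y}(M)$, which you never actually use), in which case your argument collapses to the paper's and the projector norm $K_s$ becomes $1$; or keep your $V$, observe that your argument bounds the remainder $R^{(n),V}_g$ that annihilates $V$, and then add one line: the orthogonal projection onto $\mathcal I^{-s}_{X,k}(M)^\perp$ kills $\mathcal I^{-s}_{X,k}(M)$ and hence sends $R^{(n),V}_g$ to $R^{(n)}_g$, so $\vert R^{(n)}_g\vert_{-s} \leq \vert R^{(n),V}_g\vert_{-s}$. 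Either way the lemma follows, but as written the argument proves a bound on the wrong remainder. A secondary, cosmetic point: the paper's orthogonal route gives the constant $C^{(k)}_{s,0}$ directly, whereas the biorthogonal projector introduces an extraneous factor $K_s$, so the orthogonal choice is both simpler and sharper here.
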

 \begin{proof} For any $f \in W^s_{X,Y}(M)$ we have the orthogonal decomposition
 $$
 f= f_0 + f_1  \quad \text{ with }   f_0 \in  [\text{ker} ({\mathcal R}^{-s}_k)] ^\perp \text{ and } f_1 \in \text{ker} ({\mathcal R}^{-s}_k)\,.
 $$
 Since by construction $R_g  \in  ({\mathcal R}^{-s}_k)^\perp$ and $f_0 \in \text{ker}[ ({\mathcal R}^{-s}_k)^\perp]$,  it follows that
 $$
 R^{(n)}_g (f) =  R^{(n)}_g (f_0 + f_1) =  R^{(n)}_g (f_1) =   [\Phi_*^n ({\mathcal C}_g)] (f_1)\,.
 $$
 Since the function $f_1 \in \text{ker} ({\mathcal R}^{-s}_k)$, it is a $k$-iterated coboundary with transfer function
 $u_1\in L^2(M, \omega)$. It follows that
 $$
 \vert R^{(n)}_g (f)\vert \leq  \vert {\mathcal C} (f_1, g , n) \vert \leq  \lambda ^{-kn} \vert u_1\vert_0 \vert X^k g\vert_0 \leq C_s  \lambda ^{-kn} \vert X^k g\vert_0 
 \vert f_1\vert_s \,.
 $$
 Since by orthogonality $\vert f_1 \vert_s \leq \vert f \vert_s$ we finally derive the stated bound.
  \end{proof} 
 
\begin{lemma}  For any $D\in \mathcal R^{-s}_k$, let  $\lambda_D= \mu \lambda^{-h} \in \C$, with $\mu\in \{\mu_2, \dots, \mu_{2g}\}$, denote the corresponding eigenvalue. If $\vert \lambda^{k-h}\mu \vert >1$, there exist
$c_D(g) \in \C$ and $C_D(g) >0$ such that 
$$
 \vert c^{(n)} _D(g)  - c_D(g) \lambda^{n}_D  \vert  \leq  C_D(g) (\lambda^{k-h} \mu)^{-n} \,.   
$$
\begin{proof} By definition we have
$$
\begin{aligned}
 \Phi_*^{n+1}( {\mathcal C}_g)  &= \sum_{ D\in {\mathcal R}^{-s}_k }   c^{(n+1)}_D(g)   D  +   R^{(n+1)}_g 
 = \sum_{ D\in {\mathcal R}^{-s}_k }   c^{(n)}_D(g)    \Phi_* (D)   +   \Phi_*(R^{(n)}_g ) \\
 &= \sum_{ D\in {\mathcal R}^{-s}_k }   c^{(n)}_D(g)  \lambda_D  D   +   \Phi_*(R^{(n)}_g )  \,.
\end{aligned}
$$
It follows by the above identities and the previous  lemma that,  for every $n\in \N$, there exists $r_n\in \C$ with 
$\vert r_n(g) \vert \leq C_s(g) \lambda^{-k n}$  such that
$$
c^{(n+1)}_D(g)  =   \lambda_D  c^{(n)}_D(g)  + r_n(g) \,.
$$
By solving the difference equation we can write
$$
c^{(n)}_D(g) = \lambda_D^n  \left( c^{(0)}_D(g) + \sum_{l=0}^{n-1}  \lambda_D ^{-l-1}  r_l(g)  \right) \,.
$$
By the estimate on the remainder term we have
$$
\vert \lambda_D ^{-l}  r_l(g) \vert \leq  C_s(g)  [\lambda_D \lambda^k ]^{-l}  \,,
$$
and, since $\vert \lambda_D \lambda^k\vert  = \vert \mu  \lambda^{-h}  \lambda^k\vert = \vert  \lambda^{k-h} \mu\vert >1$, 
the series in the above formula is a convergent geometric series, hence the statement follows.

\end{proof} 

\begin{remark}
There is a symmetry, for all $f, g \in L^2(M, \omega)$ and all $n\in \N$, 
$$
\langle f \circ \Phi^n, g\rangle_{L^2(M, \omega)}  =  \langle f, g\circ \Phi^{-n} \rangle_{L^2(M, \omega)}\,.
$$
It follows that in the above expansion the coefficients $c_D$ are given by generalized invariant distributions
for the stable translation flow $Y$ on $M$.  In other terms, for all coefficients $c(f,g)$ in the Ruelle-type 
expansion of correlations there exists $k\in \N\setminus \{0\}$ such that, for all $f, g \in \mathcal S_{X,Y}(M)$, 
$$
c(Y^k f, g) = c (f, X^k g) =0 \,.
$$
\end{remark}
This duality is related to that discovered by Bufetov \cite{Bu14} in his work on limit distributions of ergodic averages
(see also \cite{BuF14} for horocycle flows.)

\end{lemma}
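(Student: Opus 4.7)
The plan is to reduce the statement to solving a scalar inhomogeneous linear recursion for the coefficients $c^{(n)}_D(g)$. Starting from the spectral expansion
$$\Phi^n_*(\mathcal{C}_g) = \sum_{D' \in \mathcal{R}^{-s}_k} c^{(n)}_{D'}(g) D' + R^{(n)}_g,$$
I would apply $\Phi_*$ once more to both sides. On the left this becomes $\Phi^{n+1}_*(\mathcal{C}_g)$, which admits the analogous expansion with coefficients $c^{(n+1)}_{D'}(g)$; on the right, the eigenrelation $\Phi_*(D) = \lambda_D D$ transforms the sum into $\sum_{D'} c^{(n)}_{D'}(g)\lambda_{D'} D' + \Phi_*(R^{(n)}_g)$. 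Projecting onto the $D$-component yields the recurrence
$$c^{(n+1)}_D(g) = \lambda_D\, c^{(n)}_D(g) + r_n(g),$$
where $r_n(g)$ denotes the scalar $D$-component of $\Phi_*(R^{(n)}_g)$ in the decomposition induced by $\mathcal{R}^{-s}_k$ and its complement in $W^{-s}_{X,Y}(M)$.

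Next, I would combine this recursion with the remainder bound $|R^{(n)}_g|_{-s} \leq C_s |X^k g|_0 \lambda^{-kn}$ proved in the preceding lemma; continuity of the (finite-rank) spectral projection onto the $D$-eigenspace then gives $|r_n(g)| \leq C'(g)\lambda^{-kn}$. Iterating the recurrence explicitly produces
$$c^{(n)}_D(g) = \lambda_D^n \Big( c^{(0)}_D(g) + \sum_{l=0}^{n-1} \lambda_D^{-l-1}\, r_l(g) \Big).$$
The hypothesis $|\lambda^{k-h}\mu| > 1$ is exactly $|\lambda_D|\,\lambda^k > 1$, so each summand is bounded by a constant times $(|\lambda_D|\lambda^k)^{-l}$, and the series converges absolutely at geometric rate. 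Setting $c_D(g) := c^{(0)}_D(g) + \sum_{l=0}^{\infty} \lambda_D^{-l-1} r_l(g)$, the tail estimate gives
$$\big| c^{(n)}_D(g) - c_D(g)\lambda_D^n \big| = |\lambda_D|^n \Big| \sum_{l=n}^{\infty} \lambda_D^{-l-1} r_l(g) \Big| \leq C_D(g)\, (\lambda^{k-h}\mu)^{-n},$$
which is the claimed bound.

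The main technical point, and the one I would spend the most care on, is the extraction and uniform control of $r_n(g)$. A priori $\Phi_*$ does not preserve the complement of $\mathcal{R}^{-s}_k$ used to split off $R^{(n)}_g$, so turning the bound on $|R^{(n)}_g|_{-s}$ into a bound on $|r_n(g)|$ requires that the spectral projector onto the finite-dimensional $\Phi_*$-invariant span of $\mathcal{R}^{-s}_k$ be continuous as a map $W^{-s}_{X,Y}(M) \to \C$, with operator norm independent of $n$. This is a consequence of the finite-dimensionality of $\mathcal{I}^{-s}_{X,k}(M)$ (proved earlier) and of the fact that $\Phi_*$ acts as a direct sum of a Jordan matrix on this subspace and a bounded operator on its complement. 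Once this is established, the rest is a routine scalar computation, and the threshold $|\lambda^{k-h}\mu|>1$ appears naturally as the precise condition that the forcing term decays faster than the eigenvalue $\lambda_D$ grows, ensuring convergence of the geometric correction.
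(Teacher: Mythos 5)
Your proof follows essentially the same route as the paper's: apply $\Phi_*$ once to the spectral expansion of $\Phi_*^n(\mathcal{C}_g)$, project onto the $D$-component to obtain the scalar recurrence $c^{(n+1)}_D(g) = \lambda_D c^{(n)}_D(g) + r_n(g)$ with $|r_n(g)| \lesssim \lambda^{-kn}$, solve it explicitly, and observe that under $|\lambda^{k-h}\mu|>1$ the correction series converges geometrically so that setting $c_D(g)$ to be its full sum gives the claimed tail bound. The care you take in justifying that $|R^{(n)}_g|_{-s}\lesssim\lambda^{-kn}$ actually yields $|r_n(g)|\lesssim\lambda^{-kn}$ via boundedness of the spectral projector onto the finite-dimensional span of $\mathcal{R}^{-s}_k$ is a worthwhile clarification of a step the paper passes over in silence with ``It follows \dots that there exists $r_n$''.
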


\begin{problem}  Generalize the results of Faure, Gou\"ezel and Lenneau \cite{FGL} to the case of non-linear
pseudo-Anosov maps of surfaces, in particular to smooth  perturbations of pseudo-Anosov maps by 
sufficiently small perturbations supported on the complement of the singularity set. 
\end{problem}
The author has proved in \cite{F} that, assuming that the non-linear pseudo-Anosov map has a Margulis
measure, then all the Ruelle resonances in the interval $[e^{h_{top}}, 1)$ are determined by the action of the
map on the first cohomology (or homology) of the surface.

\section{Transfer cocycles and generic translation flows}
\label{sec:transferTeich}

Let $\mathcal H_g$ denote the space of Abelian holomorphic differentials ($1$-forms) on
Riemann surfaces on a topological (smooth) surface $S$ of genus $g\geq 2$.  We recall 
that there is a natural identification between  translation structures $\{X,Y\}$, given by pairs
of commuting transverse vector fields with appropriate normal forms at the singularities, and 
Abelian holomorphic differentials $h \in \mathcal H_g$ on Riemann surfaces.

For any matrix $A\in GL(2, \R)$ and $h\in \mathcal H_g$ and let $Ah$ denote the closed complex-valued $1$-form
$$
A h =  [a \text{\rm Re} (h) + b \text{\rm Im} (h)] + \imath [c \text{\rm Re} (h) + d \text{\rm Im} (h) ] \,,\quad   \text{\rm for } \,A:=\begin{pmatrix} a & b \\ c & d  \end{pmatrix}  \,.
$$
Alternatively, in the language of pairs of (commuting) vector fields $\{X,Y\}$ we can define the $SL(2, \R)$ action as follows:
$$
A \begin{pmatrix} X \\ Y \end{pmatrix} = \begin{pmatrix} aX +bY \\ cX+ dY \end{pmatrix} \,,\quad   \text{\rm for } \,A:=\begin{pmatrix} a & b \\ c & d  \end{pmatrix}  \,.
$$

There exists a unique complex structure on $S$, that is, a unique Riemann surface $M_{Ah}$, such that $Ah$ is holomorphic on $M_{Ah}$. This is the well-known standard definition of the action of the group $GL(2, \R)$ on ${\mathcal H}_g$. 

This action of $GL(2, \R)$ commutes with the diagonal action of the group $\text{\rm Diff}^+(S)$ of orientation preserving diffeomorphisms of the surface $S$, hence it induces an action on the quotient  space $\hat {\mathcal H}_g :=(\mathcal H_g \times S)/ \text{\rm Diff}^+(S)$:
$$
A [h] =   [Ah]\,,  \quad \text{ for all } (A,[h])  \in  GL(2, \R) \times {\mathcal H}_g .
$$
The subaction of the diagonal subgroup $(g_t) <SL(2, \R)$ on ${\mathcal H}_g$ is known as the {\it Teichm\"uller geodesic flow}, the actions of the unipotent subgroups of $SL(2, \R)$ are known as the {\it Teichm\"uller horocycle flows}.

\smallskip
We introduce the notion of a {\it transfer cocycle}. For every $h\in {\mathcal H}_g$, let ${\mathcal S}'_h(M)$ denote a space of tempered distributions (or currents) on $M$ defined in terms of the translation structure  determined by $h$ on the underlying Riemann surface $M$. Let  ${\mathcal S}'_g(M)$ denote the bundle
$$
{\mathcal S}'_g(M) = \{ (h, D) \vert   D \in {\mathcal S}'_h(M) \}\,.
$$
It is possible to extend the $GL(2, \R)$ action on $\mathcal H_g$ to the bundle ${\mathcal S}'_g(M)$ by parallel transport with respect to the trivial connection, that is,
$$
A(h,D) =  (Ah, D) \,, \quad \text{ for all } (h,D) \in {\mathcal S}'_g(M)\,.
$$
Since the above action of $GL(2, \R)$ commutes with the diagonal action of $\text{\rm Diff}^+(S)$ on ${\mathcal S}'_g$ defined as
$$
\phi (h,D) = (\phi^*(h), \phi_* (D))   \,, \quad \text{ for all } (h,D) \in {\mathcal S}'_g(M)\,,
$$
it follows that the action of $GL(2, \R)$ on ${\mathcal S}'_g$ passes to the quotient to an action on the vector bundle ${\mathcal S}'_g := {\mathcal S}'_g / \text{\rm Diff}^+(S)$,
over the action of $GL(2, \R)$ on ${\mathcal S}'_g$. The subaction of the of the diagonal subgroup $(g_t) <SL(2, \R)$ on $\mathcal H_g$ is by definition
the transfer cocycle $(\mathcal L_t)$ over the Teichm\"uller flow

\smallskip
The above construction gives the Kontsevich--Zorich cocycle (a finite dimensional cocycle) when  
${\mathcal S}'_h (M)$ is replaced with the cohomology $H^1(S, \R)$  or    $H^1(S, \Sigma, \R)$ (or their complexifications) for all $h \in \mathcal H_g$ . We are interested in the case when ${\mathcal S}'_g(M)$ is a Banach (Hilbert) bundle  of distributions or (closed) currents.  The general question is whether the cocycle has well-defined Lyapunov exponents and whether it has a spectral gap. In fact, we are interested in the Oseledets decomposition of particular distributions, or currents, such as those given by rectifiable arcs or by ''correlations''.

\medskip
For instance, our generalization of the Ruelle resonance problem to this setting is as follows. For any function $g\in L^2(S, \omega_h)$ on $S$ we can consider the distribution $\mathcal C_h(g) \in W^{-s}_h(S)$  defined as
$$
\mathcal C_h(g) (f )  =  \langle f, g \rangle_{L^2(S,\omega_h)}\,.
$$
The distribution $\mathcal C_h(g)$ is in fact an absolutely continuous measure, but we ask what is its asymptotic behavior, for instance under the  Teichm\"uller flow (that is, under the action of the distributional cocycle 
$(\mathcal L_t)$ defined above). 

In \cite{F02} the author considered the above construction with ${\mathcal S}'_g (M)$ equal to the bundle ${\mathcal Z}^{-1}_g(M)$ with fiber the space of closed $1$-currents in the Sobolev space $W^{-1}_h(M)$.  It was proved there that the transfer 
cocycle has Lyapunov exponents equal to the Kontsevich--Zorich exponents with respect to any KZ-hyperbolic $SL(2,\R)$-invariant measure (with almost any fiber generated by basic currents for the horizontal and 
vertical vector fields). Results on the deviation of ergodic averages were then derived by a short argument based 
on the de Rham theorem.

 \medskip
 We outline this argument below.  Let $ \mathcal Z^{-1}_g (M)$ and $\mathcal E^{-1}_g(M)$ denote respectively
 the bundles of closed and exact currents with coefficients in the Sobolev bundle $W^{-1}_g(M)$ with fiber 
 at every $h=(X,Y) \in \mathcal H_g$ the dual Sobolev space $W_{X,Y}^{-1} (M)$. 
 
 \smallskip
 Under the hypothesis that the Kontsevich--Zorich cocycle is non-uniformly hyperbolic, by a representation theorem, for almost all $h=(X,Y)\in \mathcal H_g$, we have
 $$
 {\mathcal Z}^{-1}_h (M) := \mathcal B^{-1}_X (M) \oplus \mathcal B^{-1}_Y (M) \oplus \mathcal E^{-1}_h(M)\,.
 $$
 Let $\mu$ be a Borel probability measure, invariant under the Teichm\"uller flow and let
 $$
 1=\lambda^\mu_1 > \lambda^\mu_2 \geq \dots \geq \lambda^\mu_g (\geq 0) \geq -\lambda^\mu_g \geq \dots\geq 
 -\lambda^\mu_1=-1
 $$
 denote the Kontsevich--Zorich exponents (we recall that $\lambda_2^\mu<1$ was proved by W.~Veech \cite{V86} for some class of measures, and in \cite{F02} in general; that $\lambda_g>0$ was proved in \cite{F02}, \cite{F11} for the canonical measures and for other $SL(2,\R)$-invariant measures; the simplicity of the spectrum was proved by Avila and Viana \cite{AV07} for the canonical measures). 
 
 \begin{theorem} \cite{F02} Let us assume that the Kontsevich-Zorich cocycle is non-uniformly hyperbolic. The transfer
 cocycle on the bundle  ${\mathcal Z}^{-1}_g(M)$ has Lyapunov spectrum 
 $$
 1=\lambda^\mu_1 > \lambda^\mu_2 \geq \dots \geq \lambda^\mu_g \geq 0 \geq -\lambda^\mu_g \geq \dots\geq 
 -\lambda^\mu_1=-1\,.
 $$
 The Oseledets sub-bundle of the exponent $0$ is the sub-bundle of exact currents (which is infinite dimensional).
 \end{theorem}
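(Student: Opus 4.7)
The plan is to exploit the $(g_t)$-equivariant direct sum decomposition
$$
\mathcal Z^{-1}_h(M)=\mathcal B^{-1}_X(M)\oplus \mathcal B^{-1}_Y(M)\oplus \mathcal E^{-1}_h(M),
$$
valid at a.e.~$h=(X,Y)\in \mathcal H_g$ under the KZ non-uniform hyperbolicity hypothesis, and to compute the Lyapunov spectrum on each of the three summands separately. The key structural fact is that both summands of basic currents are preserved by the transfer cocycle (since $(g_t)_\ast$ rescales $X$ and $Y$ but preserves the corresponding horizontal and vertical foliations, hence their basic-current spaces), and that the space of exact $W^{-1}$-currents is also preserved (closedness and exactness are topological notions, invariant under parallel transport).

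For the spaces $\mathcal B^{-1}_X$ and $\mathcal B^{-1}_Y$, I would use the exact sequence from the Lemma in Section~2,
$$
0\to \C\,\imath_X\omega\to \mathcal B^{\,0}_X(M)\xrightarrow{\mathcal L_Y}\mathcal B^{-1}_X(M)\xrightarrow{\mathcal R} H^1_X(M,\C)\to 0,
$$
together with the fact that at a generic uniquely ergodic $h$ the $L^2$ basic currents reduce to $\C\,\imath_X\omega$; this makes $\mathcal R:\mathcal B^{-1}_X(M)\to H^1_X(M,\C)$ an isomorphism. The map $\mathcal R$ is tautologically $(g_t)$-equivariant, conjugating the transfer cocycle on the left to the Kontsevich--Zorich cocycle on the right. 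Since $[\imath_X\omega]\in H^1_X$ is the top KZ direction with exponent $+1$ while $[\imath_Y\omega]\notin H^1_X$, the KZ spectrum on $H^1_X$ is $\{1,\pm\lambda_2^\mu,\dots,\pm\lambda_g^\mu\}$. To transport this conclusion to the Sobolev side, I would argue that $\mathcal R$ is a norm-equivalence up to subexponential factors, i.e.~that the $W^{-1}_h$-Sobolev norm of a basic current is comparable to any Hodge-type norm of its cohomology class uniformly along Oseledets-generic orbits; this follows from the Hodge-theoretic representation of basic currents established in \cite{F02}. The analogous analysis for $\mathcal B^{-1}_Y$ produces the spectrum on $H^1_Y$, namely $\{-1,\pm\lambda_2^\mu,\dots,\pm\lambda_g^\mu\}$, so that the union covers the full KZ spectrum.

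For the exact sub-bundle $\mathcal E^{-1}_h(M)$, I would write each exact current as $dU$ for a distributional primitive $U$ and exploit the fact that the transfer cocycle acts trivially on cohomology classes, so the action restricted to $\mathcal E^{-1}$ has no cohomological content. A direct estimate using that the Teichm\"uller flow is area-preserving and that $W^{-1}_h$-norms of exact currents are controlled by $L^2$-norms of primitives $U$, modulo constants depending polynomially on the translation structure $g_t h$, yields subexponential growth of norms, hence Lyapunov exponent $0$. Infinite-dimensionality is clear since $\mathcal E^{-1}_h$ has infinite dimension.

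The main obstacle is the second step: showing that the algebraic isomorphism $\mathcal R$ actually transports \emph{Lyapunov exponents}, not merely eigenvalues. Concretely, one must control the distortion between the Sobolev norm $\|C\|_{W^{-1}_{g_t h}}$ and the Hodge norm $\|[C]\|_{g_t h}$ uniformly in $t$ along $\mu$-generic Teichm\"uller orbits. This ultimately reduces to the non-triviality of the Hodge representation of basic currents over the Oseledets filtration of the KZ cocycle, a non-trivial input from the theory developed in \cite{F02} which essentially requires the KZ non-uniform hyperbolicity assumption. The secondary obstacle is ensuring that no hidden Jordan blocks appear in the transfer cocycle restricted to $\mathcal E^{-1}$ which could upgrade the zero exponent to a polynomial growth inconsistent with Oseledets measurability; again the area-preservation of $g_t$ and the closedness--exactness topology save the day.
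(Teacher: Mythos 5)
Your overall plan is the right one and matches the paper's outlined argument (which defers the details to~\cite{F02}): decompose $\mathcal Z^{-1}_h(M)$ as $\mathcal B^{-1}_X(M)\oplus\mathcal B^{-1}_Y(M)\oplus\mathcal E^{-1}_h(M)$, identify the two basic-current summands with KZ Oseledets data, and show the exact summand carries exponent $0$. The treatment of $\mathcal E^{-1}$ is a bit loose but essentially sound: $\|dU\|_{W^{-1}_h}$ is comparable to $\|U\|_{L^2(\omega_h)}$, and the latter is a $g_t$-invariant quantity, so the exponent is $0$; the Jordan-block worry is moot, since Jordan blocks only contribute polynomial factors and cannot affect Lyapunov exponents.

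However, there is a genuine gap in the basic-current step, and it produces the wrong spectrum. You assert that at a generic uniquely ergodic $h$ the cohomology map $\mathcal R\colon\mathcal B^{-1}_X(M)\to H^1_X(M,\C)$ is an \emph{isomorphism}, by feeding $\mathcal B^{0}_X=\C\,\imath_X\omega$ into the exact sequence of Section~2. But that exact sequence, in particular the surjectivity $\mathcal B^{-s}_X\to H^1_X\to 0$, is established in Section~2 for the pseudo-Anosov fixed point (and, implicitly, for $s$ large enough); it does \emph{not} hold for the $W^{-1}$-truncation at a generic $h$. The structure theorem of~\cite{F02} that is actually invoked says the opposite of what you need: under KZ non-uniform hyperbolicity, $\mathcal B^{-1}_X(M)$ is $g$-dimensional, and $\mathcal R$ maps it isomorphically onto the non-negative Oseledets subspace of $H^1(M,\R)$, \emph{not} onto the $(2g-1)$-dimensional hyperplane $H^1_X$. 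Basic currents for $X$ whose cohomology class lies in a negative Oseledets space have Sobolev order strictly greater than $1$ and hence do not live in $W^{-1}_{X,Y}(M)$ (this already happens at a pseudo-Anosov: eigencurrents with $|\mu_i|<1$ have order $>1$). If your claim were true, $\dim(\mathcal B^{-1}_X\oplus\mathcal B^{-1}_Y)=4g-2$, which is incompatible with the exact sequence $0\to\mathcal E^{-1}_h\to\mathcal Z^{-1}_h\to H^1\to 0$ giving a $2g$-dimensional cohomological quotient; and the resulting spectrum would contain each $\pm\lambda_i^\mu$ (for $i\geq 2$) with multiplicity two, contradicting the statement you are proving. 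The correct bookkeeping is: $\mathcal B^{-1}_X$ carries $\{1,\lambda_2^\mu,\dots,\lambda_g^\mu\}$, $\mathcal B^{-1}_Y$ carries $\{-1,-\lambda_2^\mu,\dots,-\lambda_g^\mu\}$, and $\mathcal E^{-1}_h$ carries $0$ with infinite multiplicity. The norm-comparison issue you flag at the end (Sobolev norm vs.\ Hodge norm along generic orbits) is indeed the crux of~\cite{F02} and you are right to single it out, but it needs to be applied to the $g$-dimensional non-negative Oseledets subspace rather than to $H^1_X$.
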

 
 Since arc of orbits of the translation flows are, as currents, at a distance from closed currents bounded
 by the diameter of the translation surface, it follows that their behavior under the transfer cocycle is described
 by the above Lyapunov spectrum. 
 \begin{corollary} For all $i\in \{1, \dots, g\}$, let $D^h_{i, 1}, \dots, D^h_{i, m_i}$  denote a system of invariant distributions associated to the basic currents in the Oseledets space for the exponents $\lambda^\mu_i >0$. 
 For all functions  $f\in W^1_{X,Y}(M)$ such that $D_{j, 1} (f)= \dots= D_{j,m_j}(f)=0$, for all $1\leq j\leq i<g$, we have
 $$
\limsup_{T\to +\infty}  \frac{1}{\log T}  \log  \vert  \int_0^T  f\circ \Phi^X_t (x) dt  \vert   \leq  \lambda_{i+1}\,,
 $$ 
 and equality holds if there exists a distribution $D^h_{i+1}$ with basic currents of Lyapunov exponents 
 $\lambda_{i+1}\geq 0$ such that $D^h_{i+1}(f) \not =0$.  If $D_{j, 1} (f)= \dots= D_{j,m_j}(f)=0$, 
 for all $1\leq j\leq i\leq g$, then 
 $$
\limsup_{T\to +\infty}  \frac{1}{\log T}  \log  \vert  \int_0^T  f\circ \Phi^X_t (x) dt  \vert  =0\,.
 $$ 
 
 \end{corollary}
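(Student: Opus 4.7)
The plan is to realize the ergodic integral as the pairing of $f$ against an orbit-arc current, close that arc to a closed $1$-current on the translation surface $h=(X,Y)$, and then apply the Oseledets structure of the transfer cocycle on $\mathcal Z^{-1}_g(M)$ provided by the preceding theorem. Concretely, let $\gamma^X_T(x) \in W^{-1}_{X,Y}(M)$ denote the current of integration along the $X$-arc from $x$ to $\Phi^X_T(x)$, so that
$$
\int_0^T f\circ \Phi^X_t(x)\,dt \;=\; \gamma^X_T(x)(f)\,.
$$
Since $\partial \gamma^X_T(x)=\delta_{\Phi^X_Tx}-\delta_x$ has $W^{-1}$-norm bounded uniformly in $T$ and $x$, a standard closing-up construction (for instance via a short Veech-type segment joining the endpoints) produces
$$
\gamma^X_T(x) = c^X_T(x) + r^X_T(x)\,,\qquad c^X_T(x) \in \mathcal Z^{-1}_g(M)\,,\qquad |r^X_T(x)|_{-1}\le C_0\,,
$$
and the remainder contributes only $O(|f|_1)$ to the integral.

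Next I renormalize by the Teichm\"uller flow: under $g_t$ the generators rescale as $g_t(X)=e^tX$ and $g_t(Y)=e^{-t}Y$, so the same geometric $X$-arc of length $T$ on $h$ has horizontal length $Te^{-t}$ on $g_th$, and choosing $t=\log T$ gives $|c^X_T(x)|_{W^{-1}_{g_th}}=O(1)$. By the preceding theorem, the transfer cocycle on $\mathcal Z^{-1}_g(M)$ has Lyapunov spectrum $\{\pm\lambda^\mu_j\}_{1\le j\le g}\cup\{0\}$, realized on $\mathcal B^{-1}_X$, $\mathcal B^{-1}_Y$, and the exact subbundle $\mathcal E^{-1}_g$ respectively. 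Transporting the bounded norm on $g_th$ back to $h$ via the inverse cocycle, Oseledets regularity yields a decomposition
$$
c^X_T(x) = \sum_{j=1}^{g} c_j(x,T) + c_{\text{ex}}(x,T)\,,\qquad |c_j(x,T)|_{-1}\le C_\epsilon\, T^{\lambda^\mu_j+\epsilon}\,,
$$
with $c_j(x,T) \in \mathcal B^{-1}_X(M)$ in the Oseledets subspace of exponent $\lambda^\mu_j$, and with $c_{\text{ex}}(x,T)$ of sub-polynomial growth (the $\mathcal B^{-1}_Y$-components, carrying negative exponents, are absorbed into the remainder).

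Now the hypothesis on $f$ enters through the isomorphism $C(D)=D\cdot \imath_X\omega$ between $\mathcal I_X(M)$ and $\mathcal B_X(M)$, which is $\Phi_*$-equivariant up to a scalar and intertwines the Oseledets splittings of invariant distributions and of basic currents under the transfer cocycle. The Oseledets subspace of $\mathcal B^{-1}_X(M)$ of exponent $\lambda^\mu_j$ is therefore spanned by the images $C(D^h_{j,k})$ of the prescribed invariant distributions, and the pairing $c_j(x,T)(f)$ reduces to a linear combination of the values $D^h_{j,k}(f)$. The vanishing hypothesis forces $c_j(x,T)(f)=0$ for $j\le i$, while $c_{\text{ex}}(x,T)(f)=O(|f|_1)$ by Stokes' formula (writing $c_{\text{ex}}=dU$ and pairing against $f\in W^1_{X,Y}(M)$). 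Summing the surviving contributions,
$$
\Big|\int_0^T f\circ \Phi^X_t(x)\,dt\Big| \;\le\; C_\epsilon\,|f|_1\,T^{\lambda^\mu_{i+1}+\epsilon}\,,
$$
from which the stated $\limsup$ bound follows. Equality when $D^h_{i+1}(f)\ne 0$ is obtained from the matching two-sided Oseledets lower bound on $|c_{i+1}(x,T)(f)|$ combined with the non-degeneracy of the pairing against the prescribed invariant distribution; the final case, where all $j\le g$ are annihilated, leaves only the exact $0$-exponent part and yields $\limsup/\log T = 0$.

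The main obstacle is the identification step: proving that the unstable Oseledets filtration of $\mathcal B^{-1}_X$ is spanned by the basic currents associated with the invariant distributions $\{D^h_{j,k}\}$ on a set of full $\mu$-measure, so that the hypothesis $D^h_{j,k}(f)=0$ actually kills the Oseledets pairings $c_j(x,T)(f)$. This combines the $\Phi_*$-equivariance of $C:\mathcal I_X(M)\to \mathcal B_X(M)$ with Oseledets regularity along typical Teichm\"uller orbits. Secondary technicalities are the uniform control of the closing remainder $r^X_T(x)$ and a separate treatment of zero-exponent components (when $\lambda^\mu_{i+1}=0$ or $\lambda^\mu_g=0$), where only sub-exponential growth is available but still suffices for the $\log T$-normalized limsup.
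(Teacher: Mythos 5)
Your proposal takes essentially the same route as the paper's sketch (which follows \cite{F02}): express the ergodic integral as the pairing against an orbit-arc current, close the arc to a closed current up to a bounded remainder, apply the Oseledets structure of the transfer cocycle on $\mathcal{Z}^{-1}_g(M)$, and observe that the hypothesis $D_{j,k}(f)=0$ kills the dominant contributions via the identification of the positive Oseledets filtration with $\mathcal{B}^{-1}_X(M)$. The structural steps, and the obstacle you flag as central (that the Oseledets subbundles of $\mathcal{B}^{-1}_X$ are spanned by the basic currents $C(D^h_{j,k})$), are the right ones.

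Two points need more care. First, there is a sign error in the renormalization step. With the convention $g_t(X)=e^tX$ that you adopt (copied literally from the paper's ``alternative'' formula, which however is inconsistent with the paper's own differential-form definition $g_t h = e^t\operatorname{Re}(h)+\imath e^{-t}\operatorname{Im}(h)$, the one that the theorem you invoke is stated for), transporting the bounded $W^{-1}$-norm at $g_{\log T}h$ back to $h$ along the inverse cocycle \emph{contracts} the $\lambda_j$-component by $T^{-\lambda_j}$, giving $|c_j(x,T)|_{-1}\lesssim T^{-\lambda_j+\epsilon}$, not $T^{\lambda_j+\epsilon}$ as you claim; moreover with that convention $\imath_X\omega$ has exponent $-1$, so $\mathcal{B}^{-1}_X$ would carry the \emph{negative} exponents. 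The conclusion you want holds, but requires the standard convention $g_t(X)=e^{-t}X$, in which one renormalizes by $g_{-\log T}$ and $\mathcal{B}^{-1}_X$ indeed carries the positive spectrum. Second, the equality statement when $D^h_{i+1}(f)\neq 0$ is only asserted: Oseledets regularity gives two-sided norm bounds for a nonzero vector in the $\lambda_{i+1}$-subspace, but you still need to show that the arc current $c^X_T(x)$ has a genuinely nonvanishing component in that direction, which is not automatic from $D^h_{i+1}(f)\neq 0$ alone and requires a separate argument (in \cite{F02} this relies on unique ergodicity of the flow and the non-degeneracy of the resulting asymptotic cycle). A minor technicality: on a surface Dirac masses lie in $W^{-1-\epsilon}$ but not in $W^{-1}$, so $\partial\gamma^X_T(x)$ is not strictly uniformly bounded in $W^{-1}$; the closing-up construction still works, but the remainder should be controlled in a slightly weaker Sobolev norm (or one should use a genuine good return/closing lemma).
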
 
 
 After recalling the above results on ergodic averages, we return to the generalization of Ruelle eigenstates
 to the generic translation flows. By the results on solutions of the cohomological equation for almost all
 translation flows (see \cite{F97}, \cite{F07}, \cite{MMY05}, \cite{MY16}), it is possible to construct for almost
 all translation surface $\{X,Y\}$ the space of generalized (iterated) invariant distributions $\mathcal I_{X,k}(M)$
 (for all $k\in \N\setminus\{0\}$) and $\mathcal I_{X,\infty}(M)$ and the corresponding bundles $\mathcal I_k(M)$
 and $\mathcal I_\infty(M)$ over the  moduli space.  We have the following result
 
 \begin{theorem} \label{thm:transfer_cocycle} Let us assume that the KZ cocycle is non-uniformly hyperbolic. The Oseledets spectrum
 of the transfer cocycle over the the bundle $\mathcal I_\infty(M)$ has exponents
 $$
 \{1\} \cup \{ \pm \lambda_i - j   \vert  i=2, \dots, g,  j \in \N\setminus\{0\} \} \,.
 $$
 The Lyapunov exponent $1$ is simple and corresponds to the subbundle of $\mathcal I_\infty (M)$ given by the invariant area on $M$. In addition, the Lyapunov exponent $\pm \lambda_i -j  $ has multiplicity exactly $j$, for all $i\in \{2, \dots, g\}$ and $ j \in \N\setminus\{0\}$.
  \end{theorem}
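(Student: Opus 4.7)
The strategy is to carry over nearly verbatim the spectral computation of Section \ref{sec:pseudoAnosov} that determines $\sigma_{\mathcal I_{X,\infty}(M)}(\Phi_*)$, with the pseudo-Anosov data $(\Phi_*, \lambda, \{\mu_i\}_{i=2}^{2g-1})$ replaced by the Teichm\"uller data $(\mathcal L_t, e^{t}, \{\pm\lambda_i\}_{i=2}^{g})$. The two algebraic inputs of the discrete proof, namely the $\mathcal L_t$-equivariant isomorphism $C : \mathcal I_X(M)\to\mathcal B_X(M)$, $D\mapsto D\cdot\imath_X\omega$, and the four-term exact sequence
\[
0\to\C\,\imath_X\omega\to\mathcal B^{-s+1}_X(M)\xrightarrow{\mathcal L_Y}\mathcal B^{-s}_X(M)\xrightarrow{\mathcal R}H^1_X(M,\C)\to 0,
\]
hold on every translation surface (both established in \cite{F02}), and are automatically $\mathcal L_t$-equivariant since the transfer cocycle is parallel transport with respect to the trivial connection. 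The scalings $\mathcal L_t\circ\mathcal L_X = e^{t}\,\mathcal L_X\circ\mathcal L_t$ and $\mathcal L_t\circ\mathcal L_Y = e^{-t}\,\mathcal L_Y\circ\mathcal L_t$, which follow from $g_t^*X=e^{t}X$ and $g_t^*Y=e^{-t}Y$, identify $\mathcal L_X$ and $\mathcal L_Y$ as creation and annihilation operators shifting Oseledets exponents by $+1$ and $-1$ respectively.

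Given these inputs, the proof proceeds in two steps. First, one computes the spectrum on $\mathcal I_{X,1}(M)$: by the representation result for $\mathcal Z^{-1}_g(M)$ recalled before the theorem and by KZ-hyperbolicity, the de Rham image of $\mathcal B_X^{-1}(M)$ modulo its exact sub-bundle is identified with a KZ-invariant complement inside $H^1_X(M,\R)$ carrying the Lyapunov spectrum $\{1, \pm\lambda_2, \ldots, \pm\lambda_g\}$ (the class $[\imath_X\omega]$ contributing the top exponent $1$). Feeding this into the exact sequence and iterating the $\mathcal L_Y$-descent ladder gives the spectrum of $\mathcal L_t$ on $\mathcal B_X(M)$ as $\{1\}\cup\{\pm\lambda_i - j : i=2,\ldots,g,\ j\in\N\}$, which is then transported through $C^{-1}$ to $\mathcal I_{X,1}(M)$. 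Secondly, the induction on $k$ via the creation operator $\mathcal L_X$ proceeds exactly as in the discrete proof: the quotient $\mathcal I_{X,k+1}(M)/\mathcal I_{X,k}(M)$ injects into $\mathcal I_{X,k}(M)/\mathcal I_{X,k-1}(M)$ under $\mathcal L_X$ (with the codimension-one obstruction corresponding to $\C\omega$) with exponent shift $-1$, so the multiplicity of $\pm\lambda_i-l$ in $\mathcal I_\infty(M)$ reduces to counting pairs $(j,k)\in\N\times(\N\setminus\{0\})$ with $j+k = l+1$, giving exactly $l$. An explicit basis of each eigenspace is the ladder $\{D, \mathcal L_X\mathcal L_Y D, \ldots, (\mathcal L_X\mathcal L_Y)^{l-1}D\}$ produced from a generating eigenvector $D\in\mathcal I_{X,l}(M)$, the truncation $\mathcal L_X^{l}D=0$ being built into the definition of $\mathcal I_{X,l}(M)$. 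The simple top exponent corresponding to the invariant area is preserved throughout since it lies outside the image of $\mathcal L_X$.

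The main obstacle, absent in the discrete setting, is that the Oseledets multiplicative ergodic theorem does not apply directly to the infinite-dimensional bundle $\mathcal I_\infty(M)$. I would therefore work at each finite order $\mathcal I_{X,k}^{-s}(M)$, where the iterated Sobolev bounds $\vert u\vert_t \leq C^{(k)}_{s,t}\vert f\vert_s$ from the iterated cohomological lemma and their duals control the norm of $\mathcal L_t$ along $g_t$-orbits; the temperedness of the constants $C^{(k)}_{s,t}$ along almost every Teichm\"uller trajectory follows from the log-integrability of the KZ cocycle combined with the uniform Sobolev estimates of \cite{F07}. A parallel measure-theoretic subtlety is that the exact sequence, the isomorphism $C$, and the surjectivity of $\mathcal L_X$ modulo $\C\omega$ must all be set up on a single full-measure $g_t$-invariant subset of the stratum, which is standard under the KZ-hyperbolicity assumption; once these analytic and measure-theoretic issues are handled, the purely algebraic spectral computation of Section \ref{sec:pseudoAnosov} carries through unchanged.
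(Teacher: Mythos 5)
The paper does not actually present a proof of Theorem~\ref{thm:transfer_cocycle}; immediately after the surrounding discussion it remarks that the proof is ``entirely analogous'' to the pseudo-Anosov computation of Section~\ref{sec:pseudoAnosov} and ``has been omitted.'' Your proposal carries out exactly this analogy: you invoke the isomorphism $C:\mathcal I_X(M)\to\mathcal B_X(M)$, the four-term exact sequence, and the creation/annihilation scaling of $\mathcal L_X,\mathcal L_Y$ (now read against the $g_t$-scaling $e^{\pm t}$ rather than $\lambda^{\pm 1}$), transport the KZ Lyapunov data to $H^1_X(M,\R)$, descend through the $\mathcal L_Y$-ladder, and then climb the $\mathcal L_X$-ladder through $\mathcal I_{X,k}(M)$, exactly as in Lemma~2.9 of the pseudo-Anosov section. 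This matches the paper's intended (unwritten) route. You have also sensibly identified the two subtleties that are genuinely new in the ergodic setting and that the paper leaves implicit: the need to work in the finite-dimensional truncations $\mathcal I^{-s}_{X,k}(M)$ so that the multiplicative ergodic theorem applies, with tempered control of the Sobolev constants along $g_t$-orbits, and the need to carry out all the algebraic identifications on a single full-measure $g_t$-invariant set.

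One small bookkeeping slip: the exponent $\pm\lambda_i - l$ appears in the graded piece $\mathcal I_{X,k}(M)/\mathcal I_{X,k-1}(M)$ with shift $-j$ precisely when $j + k = l$ (with $j\in\N$, $k\in\N\setminus\{0\}$), and this indeed has $l$ solutions; as written, your equation $j + k = l + 1$ has $l+1$ solutions $(j,k)$, so either the equation or the count needs to be adjusted. The conclusion (multiplicity exactly $l$) is the intended one and matches the theorem statement, and the companion basis $\{D,\mathcal L_X\mathcal L_Y D,\dots,(\mathcal L_X\mathcal L_Y)^{l-1}D\}$ for $D\in\mathcal I_{X,l}(M)$ is consistent with it.
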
 
 
 \begin{problem} Generalize the results of Faure, Gou\"ezel and Lanneau \cite{FGL} on the deviation of ergodic
 averages and cohomological equations for the unstable foliations of pseudo-Anosov maps to (measure) generic
 translation flows on higher genus surfaces. 
 \end{problem}

 It should be possible to prove a version the above result on the basis of the general  analytic techniques
 pionereed by Giulietti and Liverani \cite{GL} and exploited in the work of Faure, Gou\"ezel and Lanneau. 
 In particular, for every translation surface $h=(X,Y)$, let $\mathcal S'_h$ denote a distributional space
 introduced in the work of Faure, Gou\"ezel and Lanneau. These spaces are a straightforward adaptation of
 to translation surfaces of the anisotropic Banach spaces introduce by S.~Gou\"ezel and C.~Liverani \cite{GouL06}. 
 It should be possible to prove that the corresponding transfer cocycle  is a linear cocycle of {\it quasi-compact} operators on a Banach bundle. (We recall that a quasi-compact operator is an operator equal to a sum of a compact operator and an operator with ``small'' spectral radius). For quasi-compact cocycles on Banach bundles, the {\it multiplicative ergodic theorem} holds (see for instance~\cite{GQ15} or~\cite{Bl16} and references therein), hence we would derive that the transfer cocycle has a Lyapunov spectrum. The main remaining question would concern the (generic) multiplicity of the top Lyapunov exponent. It should be possible to derive from the generic minimality, or (unique) ergodicity, of generic translation flows that such an exponent is simple. By this approach it should be possible to give a new proof of a spectral gap result related to the polynomial bounds on ergodic integrals found by Kontsevich--Zorich \cite{KZ97}, \cite{Zo97}. 
 
 The Lyapunov-Ruelle spectrum could be computed in terms of Lyapunov exponent of the Kontsevich--Zorich cocycle on the cohomology bundle only by attaching cohomology classes
 to distributions in the stable or unstable subbundles of the transfer cocycle, as in the computation  of Ruelle resonances 
 of pseudo-Anosov maps, explained in these notes, or in the entirely analogous proof of the above Theorem~\ref{thm:transfer_cocycle}, which we have omitted.

 \section{Ruelle resonances for geodesic flows in constant negative curvature}
 \label{sec:horo}
 Let $\phi^X_\R:M\to M$ denote a geodesic flow on the unit tangent bundle $T_1(S)$ of a finite-volume hyperbolic surface. The $3$-manifold $T_1(S)$ can be identified to a quotient $M:=\Gamma \backslash SL(2, \R)$ of the
 group $SL(2, \R)$ with respect to a lattice $\Gamma < SL(2, \R)$.

Let   $\{X,U,V\}$ denote the frame of $SL(2, \R)$ such that $X$ is the generator of the horocycle flow, $U$ is
the generator of the instable horocycle flow, and $V$ of the stable horocycle flow. The following commutation relations hold:
 $$
 [X,U] =U \,, \quad [X,V] = -V     \,, \quad   \quad [U,V] = 2X\,.
 $$

 \begin{theorem}
\label{thm:CE_horo} \cite{FF03}  There exists a  space ${\mathcal I}_U(M) $ of $U$-invariant distributions  of countable dimension such that for any $f \in W^s(M)$   (with $s>1$) such that
$$
D(f) =0 \, , \quad \text{ for all } \,D\in  {\mathcal I}_U(M)\,,
$$
is an $U$-coboundary with zero-average transfer function $u \in L^2(M, \omega)$. In addition, for all $t<s-1$
there exists a constant $C_{s,t}>0$ such that
$$
\vert u\vert_t \leq   C_{s,t}   \vert  f \vert_s \,.
$$
The space ${\mathcal I}_U(M)$ has a basis of generalized eigenvectors for the linear operator Lie derivative 
$\mathcal L_X$~along the geodesic flow with spectrum
$$
\sigma_{\mathcal I_U(M)} =\{  -\frac{1\pm \sqrt { 1- 4\mu}} {2}  \vert  \mu \in \sigma (\triangle)\} \, \bigcup \,( -\N) \,.
$$
The  linear  operator $\mathcal L_X$ on $\mathcal I_U(M)$ is diagonalizable, with eigenvalues of finite multiplicity, with the possible exception of finitely many $2\times 2$ Jordan blocks for the eigenvalue $1/4$ whenever $1/4 \in \sigma(\triangle)$.  The multiplicity of the eigenvalues is determined by the spectral
mutiplicities of the eigenvalues of the Laplace operator and by the dimensions of the spaces of holomorphic
$n$-differentials on the hyperbolic surface $S$.
\end{theorem}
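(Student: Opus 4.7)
The plan is representation-theoretic, using the decomposition of $L^2(M)$ under the right regular representation of $SL(2,\R)$. Since $M=\Gamma\backslash SL(2,\R)$ has finite volume, this representation decomposes as a direct sum (plus direct integral in the non-compact case) of irreducible unitary subrepresentations, each labelled by its Casimir parameter, which on the $K$-fixed vector coincides with the Laplace eigenvalue $\mu\in\sigma(\triangle)$: principal series ($\mu>1/4$), complementary series ($0<\mu<1/4$), discrete and mock-discrete series (parameterized by integers $n\geq 1$ and corresponding to holomorphic or antiholomorphic $n$-differentials on $S$), and the trivial representation. The Sobolev spaces $W^s(M)$ defined via $1-\triangle_{SL(2,\R)}$ split correspondingly, with norms comparable to weighted $\ell^2$ norms in a basis adapted to the diagonal flow, and $U$ acts inside each irreducible; hence the cohomological equation $Uu=f$ reduces to a family of problems one irreducible at a time.

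First I would solve $Uu=f$ in each irreducible separately. In a principal or complementary series, work in the line model in which $U$ is a translation and $X$ is a dilation, expand $f$ in an $X$-eigenbasis, and invert $U$ as a first-order linear ODE; the kernel of the transpose (the space of $U$-invariant distributions) is two-dimensional, and by the commutation relation $[X,U]=U$ it consists of $\mathcal L_X$-eigenvectors whose eigenvalues are the indicial exponents $-(1\pm\sqrt{1-4\mu})/2$. In a discrete or mock-discrete series of parameter $n$, the lowest- or highest-weight distribution is the unique $U$-invariant, with $\mathcal L_X$-eigenvalue $-n$; the trivial representation contributes the invariant volume with eigenvalue $0$. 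Collecting all irreducibles yields exactly the claimed spectrum, with multiplicities determined by the spectral multiplicity of $\mu$ in $\sigma(\triangle)$ and by $\dim H^0(S,K_S^{\otimes n})$ for the discrete part, via the Gelfand--Fomin classification of the discrete spectrum of $L^2(M)$.

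Next I would carry out the inversion quantitatively to obtain the tame estimate $|u|_t\leq C_{s,t}|f|_s$ for $t<s-1$. Because $U$ is a first-order differential operator, the Green operator in each irreducible loses exactly one derivative, and with the right normalization the constant can be made uniform in $\mu$ bounded away from $1/4$. At the critical value $\mu=1/4$ the two indicial roots coalesce at $-1/2$, producing a resonant logarithmic solution in the line model and hence a rank-two generalized $\mathcal L_X$-eigenspace, i.e.\ a $2\times 2$ Jordan block; since $\mu=1/4$ appears in $\sigma(\triangle)$ with only finite multiplicity, only finitely many such blocks occur. The main obstacle is precisely this quantitative step: making the Sobolev estimate uniform across the continuous family of Casimir parameters and through the transitions at $\mu=1/4$ and at the discrete-series thresholds, where naive bounds on the Green operator blow up. Overcoming it requires a careful intertwiner between the model norms and $W^s(M)$ with constants uniform in $\mu$, together with a precise asymptotic analysis of the inverse of $U$ near the indicial collision, after which Plancherel assembles the irreducible pieces into the full statement.
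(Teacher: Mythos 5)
Your sketch is a faithful reconstruction of the strategy of the cited reference \cite{FF03}, which the present paper quotes without proof: decompose $L^2(\Gamma\backslash\SL(2,\R))$ into irreducible unitary representations graded by the Casimir/Laplace parameter, solve $Uu=f$ model by model, identify the $U$-invariant distributions and their $\mathcal L_X$-eigenvalues as indicial exponents (two per principal/complementary series, one per discrete or mock-discrete series, with the $2\times 2$ Jordan block when $\mu=1/4$), and reassemble via Plancherel with Sobolev bounds uniform in the spectral parameter. The technical obstacle you single out — uniformity of the Green operator estimate across the continuous family of Casimir parameters and near the threshold $\mu=1/4$ — is precisely the core difficulty in the original argument, so the proposal matches the paper's (cited) proof both in outline and in its identification of where the real work lies.
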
 

By the above result $k$-iterated coboundaries  coincide with the kernel of the space of $k$-invariant distributions
$$
\mathcal I_{U,k}(M) = \{  D \in \mathcal E'(M) \vert   U^k D=0\}\,.
$$

\begin{lemma} The space $\mathcal I_{U,k}(M)$ can be described as follows:
$$
\mathcal I_{U,k}(M)= \bigoplus_{j=0}^k   \mathcal L_V^j \mathcal I_U(M) \,.
$$
The space ${\mathcal I}_{U,k}(M)$ has a basis of generalized eigenvectors for the linear operator Lie derivative 
$\mathcal L_X$ along the geodesic flow with spectrum
$$
\sigma_{\mathcal I_{U,k}(M)} = \{0\} \cup   \bigcup_{j=0}^k \{ \lambda -j     \vert   
\lambda \in \sigma_{\mathcal I_{U}(M)}\setminus \{0\}      \}   \,.
$$
For every $\lambda \in  \sigma_{\mathcal I_{U}(M)}$ and every $j\in \{ 0, \dots, k \}$, let $E_X(\lambda -j)$ denote the generalized eigenspace of the operator $\mathcal L_X$ with eigenvalue $\lambda-j \in \C$. The operators 
$\mathcal L_U: E_X(\lambda-j-1) \to E_X(\lambda-j)$ and $\mathcal L_V: E_X(\lambda-j) \to  E_X(\lambda-j-1)$
are isomorphisms of finite dimensional vector spaces. The operator $\mathcal L_X$ is diagonalizable with the
exception of the eigenvalues  $1/4- j$, whenever $1/4 \in \sigma (\triangle)$, for which it has finitely many
$2\times 2$ Jordan blocks. 

\end{lemma}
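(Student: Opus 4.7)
The plan is to argue by induction on $k\in\N$, with the case $k=1$ being Theorem~\ref{thm:CE_horo}. The algebraic engine is the pair of commutation relations
\[
[\mathcal L_X,\mathcal L_V] = -\mathcal L_V\,,\qquad [\mathcal L_U,\mathcal L_V] = 2\mathcal L_X\,,
\]
inherited from $\mathfrak{sl}(2,\R)$. The first shows that $\mathcal L_V$ sends $E_X(\lambda)$ into $E_X(\lambda-1)$; iterating the second on $D\in\mathcal I_U(M)$ with $\mathcal L_X D=\lambda D$ produces the key identity
\[
\mathcal L_U\,\mathcal L_V^j D = j(2\lambda - j + 1)\,\mathcal L_V^{j-1} D\,,
\]
so that $\mathcal L_U^{j+1}\mathcal L_V^j D = 0$. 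This immediately gives the inclusion $\mathcal L_V^j\mathcal I_U(M) \subset \mathcal I_{U,j+1}(M)$, the easy direction of the direct sum decomposition, together with the eigenvalue shift $\mathcal L_V^j\colon E_X(\lambda)\to E_X(\lambda-j)$.

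For the reverse inclusion, I would take $D\in\mathcal I_{U,k}(M)$, observe that $\mathcal L_U D\in\mathcal I_{U,k-1}(M)$, and write it by induction as $\sum_{j=0}^{k-2}\mathcal L_V^j D_j$ with $D_j\in\mathcal I_U(M)$. The task is to lift this through $\mathcal L_U$: find $\widetilde D \in \bigoplus_{j=1}^{k-1}\mathcal L_V^j\mathcal I_U(M)$ with $\mathcal L_U\widetilde D = \mathcal L_U D$, so that $D-\widetilde D\in\ker\mathcal L_U = \mathcal I_U(M)$ closes the induction. On each $\mathcal L_X$-eigenspace the key identity reduces this lift to dividing by $j(2\lambda - j + 1)$, which vanishes only at $\lambda = (j-1)/2$. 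Inspection of $\sigma_{\mathcal I_U(M)}$ in Theorem~\ref{thm:CE_horo} shows that positive half-integers never occur (they would force a negative Laplace eigenvalue), and the only problematic value $\lambda=0$ corresponds to the one-dimensional volume eigenspace, which is annihilated by $\mathcal L_V$ and therefore contributes just the isolated summand $\{0\}$ in the spectral list.

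Once the decomposition is in hand the spectrum follows at once: each summand $\mathcal L_V^j\mathcal I_U(M)$ is a copy of $\mathcal I_U(M)\setminus\C\cdot\text{vol}$ with eigenvalues shifted by $-j$, and distinct summands sit at disjoint eigenvalues, which also yields the multiplicities. For the isomorphism statement, the same key identity, read in the two directions, identifies $\mathcal L_U\colon E_X(\lambda-j-1)\to E_X(\lambda-j)$ with multiplication by the nonzero scalar $(j+1)(2\lambda-j)$ (via the surjection $\mathcal L_V^{j+1}$), and $\mathcal L_V$ supplies the inverse up to this scalar, yielding the advertised bijections between adjacent generalized eigenspaces.

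For the Jordan block claim at the critical eigenvalue associated to $1/4\in\sigma(\triangle)$, the point is that $\mathcal L_V$ intertwines $\mathcal L_X - \lambda I$ with $\mathcal L_X - (\lambda-1)I$, so any $2\times 2$ nilpotent block on $\mathcal I_U(M)$ is transported verbatim by $\mathcal L_V^j$ to a block of the same size on $\mathcal L_V^j\mathcal I_U(M)$, at the shifted eigenvalue; no new blocks are created because a Jordan basis on each summand is obtained by applying $\mathcal L_V^j$ to a Jordan basis on $\mathcal I_U(M)$. The hardest step in carrying out this plan rigorously is the surjectivity in the lift: one must verify that inverting the scalar $j(2\lambda-j+1)$ on infinite-dimensional $\mathcal L_X$-eigenspaces actually produces a distribution in the correct Sobolev class, which amounts to propagating the a priori bounds $\vert u\vert_t\le C_{s,t}\vert f\vert_s$ of Theorem~\ref{thm:CE_horo} through the induction.
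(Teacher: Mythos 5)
Your proposal follows the same algebraic engine as the paper's proof (the $\mathfrak{sl}_2(\R)$ commutation relations turning $\mathcal L_U$ and $\mathcal L_V$ into creation and annihilation operators on the $\mathcal L_X$-spectrum), but it pushes the representation-theoretic structure further and gets cleaner mileage out of it. Where the paper works with the abstract intertwining identities
$(\mathcal L_X-\lambda)^j\mathcal L_V=\mathcal L_V(\mathcal L_X-(\lambda+1))^j$
and then argues surjectivity of $\mathcal L_U\colon\mathcal I_{U,k+1}\to\mathcal I_{U,k}$ ``by construction'' and injectivity of $\mathcal L_V$ via ergodicity of the horocycle flow (the kernel would consist of smooth joint invariant functions, hence constants), you instead invoke the explicit highest-weight scalar
$\mathcal L_U\mathcal L_V^jD=j(2\lambda-j+1)\mathcal L_V^{j-1}D$
for $D\in\mathcal I_U(M)$ with $\mathcal L_XD=\lambda D$. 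That one identity delivers simultaneously the inclusion $\mathcal L_V^j\mathcal I_U\subset\mathcal I_{U,j+1}$, the surjectivity needed for the reverse inclusion (via a constructive lift: divide by the scalar), and the bijectivity of $\mathcal L_U$ and $\mathcal L_V$ between adjacent eigenspaces, all without appealing to ergodicity. The check that the scalar never vanishes on the relevant spectrum (positive half-integers are excluded since $\Re\lambda\le 0$ for $\lambda\in\sigma_{\mathcal I_U(M)}$, and $\lambda=0$ corresponds to the volume, which $\mathcal L_V$ annihilates) is exactly the right thing to verify, and it replaces the paper's ergodicity argument by a numerical one.

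Two places where you should tighten things. First, the scalar identity as written holds only for genuine eigenvectors; on a $2\times 2$ Jordan block for $1/4\in\sigma(\triangle)$ there is a correction term $\mathcal L_U\mathcal L_V^jD=j(2\lambda-j+1)\mathcal L_V^{j-1}D+\text{(lower order)}$ coming from $\mathcal L_XD=\lambda D+D'$. You handle Jordan blocks separately via the intertwining, which is fine, but in the lifting step you should either restrict the scalar computation to the diagonalizable part or observe that on the two-dimensional block the operator $\mathcal L_U\mathcal L_V$ is unipotent-times-scalar and hence still invertible, so the lift goes through unchanged. Second, you are right that the genuinely delicate point is the Sobolev class of the lifted distribution: inverting the scalar costs nothing since $|2\lambda-j|\ge j$ on the spectrum, but applying $\mathcal L_V$ one extra time increases the distributional order, and one has to thread the a priori estimates of Theorem~\ref{thm:CE_horo} through the induction to keep track of the order (the paper is no more explicit about this). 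Both are issues of bookkeeping rather than conceptual gaps; the plan is sound.
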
  
\begin{proof}

By the commutation relations we have
$$
[ \mathcal L_X,  \mathcal L_V] = - \mathcal L_V \quad \text{  and } \quad  [ \mathcal L_X,  \mathcal L_U] = \mathcal L_U \,.
$$
It follows that 
$$
\begin{aligned}
(\mathcal L_X -\lambda Id) \mathcal L_V &= [\mathcal L_X -\lambda Id,
  \mathcal L_V ]  \\ &+  \mathcal L_V   (\mathcal L_X -\lambda Id) =  \mathcal L_V   \left(\mathcal L_X -(\lambda+1) Id\right) \,,
\end{aligned}
$$
and by induction, for all $j\in \N$, 
\begin{equation}
\label{eq:annihilation}
(\mathcal L_X -\lambda Id)^j  \mathcal L_V =  \mathcal L_V   (\mathcal L_X -(\lambda+1) Id)^j \,.
\end{equation}
In fact, by induction hypothesis we have
$$
\begin{aligned}
(\mathcal L_X -\lambda Id)^j  \mathcal L_V &= (\mathcal L_X -\lambda Id)  
(\mathcal L_X -\lambda Id)^{j-1} \mathcal L_V  \\ &=   (\mathcal L_X -\lambda Id)  
\mathcal L_V   (\mathcal L_X -(\lambda+1) Id)^j \\ & = \mathcal L_V   (\mathcal L_X -(\lambda+1) Id)
(\mathcal L_X -(\lambda+1) Id)^{j-1} \\
&= \mathcal L_V (\mathcal L_X -(\lambda+1) Id)^j\,.
\end{aligned}
$$
Similarly, from the commutation relations we derive that, for all $j\in \N$, 
\begin{equation}
\label{eq:creation}
(\mathcal L_X -\lambda Id)^j  \mathcal L_U =  \mathcal L_V   (\mathcal L_X -(\lambda-1) Id)^j \,.
\end{equation}
By formulas \eqref{eq:annihilation} and \eqref{eq:creation} it follows that if $D \in \mathcal E'(M)$ 
is any distributional generalized eigenvector for the geodesic flow of eigenvalue $\lambda \in \C$
and algebraic multiplicity $m\geq 1$, then $\mathcal L_V D$ and $\mathcal L_U D$ are distributional 
generalized eigenvectors for the geodesic flow of eigenvalues respectively $\lambda-1 \in \C$ and 
$\lambda+1 \in \C$ and algebraic multiplicity $m\geq 1$. 

The Lie derivative horocyclic operators $\mathcal L_V$ and $\mathcal L_U$ act therefore as annihilation and creation operators on the distributional pure point spectrum of the geodesic flow Lie derivative operator $\mathcal L_X$, and they preserve the algebraic multiplicity of eigenvalues.

It follows immediately by the definition of the iterated invariant distributions that, for all $k\in \N\setminus\{0\}$, the operator $\mathcal L_U: \mathcal I_{U,k+1}(M) \to \mathcal I_{U,k}(M)$ is well-defined. We prove below that the operator 
$\mathcal L_V: \mathcal I_{U,k}(M) \to \mathcal I_{U,k+1}(M)$ is also well-defined.

By the commutation relations, it follows by induction that the operator $\mathcal L_X:  \mathcal I_{U,k}(M) \to \mathcal I_{U,k}(M)$ is well-defined. In fact, for any  $D \in \mathcal I_{U,k}(M)$ we have
$$
\begin{aligned}
\mathcal L_U^k \mathcal L_X D&= \mathcal L_U^{k-1} \mathcal L_U \mathcal L_X D =
\mathcal L_U^{k-1} [ \mathcal L_U, \mathcal L_X] D +   \mathcal L_U^{k-1}  \mathcal L_X \mathcal L_U  D
=  - \mathcal L_U^{k} D +  \mathcal L_U^{k-1}  \mathcal L_X \mathcal L_U  D\,.
\end{aligned}
$$
Thus for $k=1$ we immediately derive that, if $D \in \mathcal I_{U,1}(M)= \mathcal I_{U}(M)$, then $\mathcal L_U \mathcal L_X D =0$, hence $ \mathcal L_X D\in \mathcal I_{U,1}(M)$. In general, we have that, if $D \in \mathcal I_{U,k}(M)$, then  $\mathcal L_U  D\in \mathcal I_{U,k-1}(M)$
and, by the induction hypothesis, also $ \mathcal L_X \mathcal L_U  D \in \mathcal I_{U,k-1}(M)$. It then follows
by the above identity that  $\mathcal L_U^k \mathcal L_X D=0$, that is, $ \mathcal L_X D\in \mathcal I_{U,k}(M)$.
We have thus proved that  $\mathcal L_X:  \mathcal I_{U,k}(M) \to \mathcal I_{U,k}(M)$ is well-defined.

Next, we prove by induction that the operator $\mathcal L_V :  \mathcal I_{U,k}(M) \to \mathcal I_{U,k+1}(M)$ is well-defined. By the commutation relations we have
$$
\mathcal L^{k+1}_U  \mathcal L_V D = \mathcal L^{k}_U   \mathcal L_U\mathcal L_V D =
\mathcal L^{k}_U [\mathcal L_U,\mathcal L_V] D + \mathcal L^{k}_U  \mathcal L_V \mathcal L_U D
=2 \mathcal L^{k}_U \mathcal L_X D + \mathcal L^{k}_U  \mathcal L_V \mathcal L_U D   \,.
$$
Thus for $k=1$, if $D \in \mathcal I_{U,1}(M)= \mathcal I_{U}(M)$, since $\mathcal L_X D \in \mathcal I_{U,1}(M)$ we have $\mathcal L^{2}_U  \mathcal L_V D=0$, that is,   $\mathcal L_V D \in  \mathcal I_{U,1}(M)$. In general,
if $D \in  \mathcal I_{U,k}(M)$, then  $\mathcal L_X D \in  \mathcal I_{U,k}(M)$ and, by induction hypothesis,
$ \mathcal L_V \mathcal L_U D \in  \mathcal I_{U,k}(M)$. It follows by the above identity that $\mathcal L^{k+1}_U  \mathcal L_V D=0$, that is, $\mathcal L_V D \in  \mathcal I_{U,k+1}(M)$.  We have thus proved that  $\mathcal L_V:  \mathcal I_{U,k}(M) \to \mathcal I_{U,k+1}(M)$ is well-defined.

Finally, we prove that for any generalized eigenspace $E_X(\lambda)$ of the operator $\mathcal L_X$, transverse to the subspace $\mathcal I_U(M)$ of invariant distributions, the operators $\mathcal L_U:
E_X(\lambda) \to E_X(\lambda+1)$ and $\mathcal L_V :E_X(\lambda+1) \to E_X(\lambda)$ are isomorphisms
of finite dimensional vector spaces. By construction the operator $\mathcal L_U: \mathcal I_{U,k+1}(M) \to \mathcal I_{U,k}(M)$ is surjective and it is creation operator (which adds $+1$ to the spectrum). It follows that the restriction of $\mathcal L_U$ to every generalized eigenspace $E_X(\lambda)$ of $\mathcal L_X$ transversal to the subspace $\mathcal I_U(M)$ of invariant distribution is an isomorphism with range equal to the eigenspace $E(\lambda+1)$. The operator $\mathcal L_V$ is injective on each generalized eigenspace  $E_X(\lambda+1)$ of $\mathcal L_X$ orthogonal to constant functions, since the kernel of $\mathcal L_V$ on $E_X(\lambda+1)$ would consists of smooth invariant functions,  which have to be constant by the ergodicity of horocycle flows.  Hence, it is also an isomorphism. 

 \end{proof}

From the results on cohomological equations (Theorem \ref{thm:CE_horo}) and from the description of the space of iterated coboundaries and of the spectrum of the geodesic flow on them, we can derive the following statement of the
Ruelle resonances and Ruelle asymptotic for geodesic flows of compact hyperbolic surfaces.
  
  \begin{theorem} The set of Ruelle resonances of the geodesic flow $\phi^X_\R$ of a compact hyperbolic surface  is
$$
\{ 1Ę\}  \cup  \{ \exp \left(  -(\frac{1\pm \sqrt { 1- 4\mu}} {2}  -j) t\right)  \vert  \mu \in \sigma (\triangle)  \text{ and } j\in \N    \} 
\bigcup \{  e^{-n t} \vert  n\in \N\} \,.
$$
There are no Jordan blocks except for the eigenvalues with $\mu=1/4$, whenever 
 $1/4 \in \sigma (\triangle)$. In this case, there are finitely many $2\times 2$ Jordan blocks with eigenvalue
 $e^{- (\frac{1}{2} -j) t}$, for all $j\in \N$.  The Ruelle asymptotic takes the following form:
 $$
 \begin{aligned}
 \langle f \circ \phi^X_t, g\rangle &\approx \left(\int_M f d\text{vol} \right) \left(\int_M g d\text{vol} \right) +
 \sum_{\mu\in \sigma(\triangle)\setminus\{1/4\}} \sum_{\pm} \sum_{j\in \N} \mathcal C^{\pm}_{\mu,j} (f,g,t) e^{ -(\frac{1\pm \sqrt { 1- 4\mu}} {2}  -j) t } \\ & + \sum_{\mu \in \sigma(\triangle)\cap\{1/4\}} \sum_{\pm} \sum_{j\in \N} \mathcal C^+_{\mu,j} (f,g,t) e^{(-\frac{1}{2}-j)t } + C^+_{\mu,j} (f,g,t) t e^{(-\frac{1}{2}-j)t }
 + \sum_{n\in \N} \mathcal C_n (f,g,t) e^{-nt} \,.
\end{aligned}
 $$

\end{theorem}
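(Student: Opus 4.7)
The proof proposal follows the same scheme as the pseudo-Anosov case of Section \ref{sec:pseudoAnosov}, with the diffeomorphism $\Phi_*$ replaced by the geodesic one-parameter group $\phi^X_{t*}$, the unstable generator replaced by $U$, the stable one by $V$, and discrete difference equations replaced by continuous ODEs. First I would establish the horocyclic analog of the iterated-coboundary decay lemma. The commutation relation $[X,U]=U$ gives $\phi^X_{t*}(U) = e^t U$, so for any $k$-iterated coboundary $f = U^k u$ with $u\in L^2(M,\omega)$ and any $g$ such that $U^k g \in L^2(M,\omega)$, integration by parts yields
$$
\langle (U^k u)\circ \phi^X_t, g\rangle_{L^2(M,\omega)} \,=\, (-1)^k e^{-kt} \,\langle u\circ \phi^X_t,\, U^k g\rangle_{L^2(M,\omega)},
$$
hence $\vert \mathcal C(f,g,t)\vert \leq e^{-kt}\,\vert u\vert_0 \,\vert U^k g\vert_0$.

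For each $k\in \N\setminus\{0\}$ and every $s>k s_0$ (with $s_0=1$ by Theorem \ref{thm:CE_horo}), iterating the cohomological equation shows that every $f\in \text{Ker}\, \mathcal I_{U,k}^{-s}(M)\subset W^s(M)$ is a $k$-iterated coboundary with transfer function $u\in L^2$ satisfying $\vert u\vert_0 \leq C_s \vert f\vert_s$. I would then pick a basis $\mathcal R_k^{-s}$ of generalized $\mathcal L_X$-eigenvectors of $\mathcal I_{U,k}^{-s}(M)$, as provided by the preceding lemma, and decompose
$$
\phi^X_{t*}(\mathcal C_g) \,=\, \sum_{D\in \mathcal R_k^{-s}} c_D^{(t)}(g)\, D \,+\, R_g^{(t)},
$$
with $R_g^{(t)} \in (\mathcal R_k^{-s})^\perp$. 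Exactly as in the pseudo-Anosov argument, the orthogonal splitting $f=f_0+f_1$ combined with the decay lemma gives $\vert R_g^{(t)}\vert_{-s} \leq C_s\, \vert U^k g\vert_0\, e^{-kt}$.

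Differentiating the above expansion in $t$ and using $\partial_t \phi^X_{t*} = \mathcal L_X \phi^X_{t*}$ together with $\mathcal L_X D = \lambda_D D$, or $\mathcal L_X D = \lambda_D D + D'$ on a Jordan block, I would arrive at the linear ODE system
$$
\partial_t c_D^{(t)}(g) \,=\, \lambda_D\, c_D^{(t)}(g) \,+\, (\text{Jordan coupling}) \,+\, r_D^{(t)}(g),
$$
with $\vert r_D^{(t)}(g)\vert \leq C_s(g)\, e^{-kt}$. Solving this system yields $c_D^{(t)}(g) = c_D(g)\, e^{\lambda_D t} + O(e^{-kt})$ whenever $\operatorname{Re}(\lambda_D) > -k$, with an additional contribution of the form $t\,\tilde c_D(g)\, e^{\lambda_D t}$ for each $2\times 2$ Jordan block at $\mu=1/4$. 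Letting $k\to\infty$ and invoking the spectrum description from the preceding lemma produces the Ruelle resonance set $\{1\}\cup\{\exp(-((1\pm\sqrt{1-4\mu})/2-j)t)\} \cup\{e^{-nt}\}$ together with the coefficients $\mathcal C^{\pm}_{\mu,j}$ and $\mathcal C_n$ that appear in the stated asymptotic expansion; the duality remark from the pseudo-Anosov case carries over verbatim and identifies these coefficients with pairings involving generalized $V$-invariant distributions.

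The main obstacle is handling the $2\times 2$ Jordan blocks at $\mu = 1/4$ cleanly: one must verify that the coupling in the ODE system preserves the Jordan structure under $\phi^X_{t*}$, so as to produce the advertised $(c_D(g)+t\,\tilde c_D(g))e^{\lambda_D t}$ behaviour rather than a degenerate resonant correction. A secondary issue is the interpretation of the $k\to\infty$ passage as an honest asymptotic expansion: for fixed $k$ one has only finitely many explicit terms plus an error $O(e^{-kt})$, and making this into a convergent series (as opposed to an asymptotic one) would require working in an anisotropic Banach space as in \cite{FGL}, which is beyond the purely distributional approach adopted here.
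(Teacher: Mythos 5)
Your proposal is essentially the continuous-time adaptation of the pseudo-Anosov scheme that the paper implicitly invokes; the paper in fact supplies no explicit proof of this theorem, only the remark that it ``can be derived'' from Theorem~\ref{thm:CE_horo} and the preceding lemma on $\mathcal I_{U,k}(M)$, so your write-out of the argument is precisely what is being gestured at. The translation you make --- $\phi^X_{t*}(U)=e^tU$ from $[X,U]=U$, the decay estimate $\vert\mathcal C(f,g,t)\vert\le e^{-kt}\vert u\vert_0\vert U^kg\vert_0$ for $k$-iterated $U$-coboundaries, the splitting of $\phi^X_{t*}(\mathcal C_g)$ along a generalized-eigenbasis of $\mathcal L_X$ on $\mathcal I^{-s}_{U,k}(M)$ plus remainder $O(e^{-kt})$, and the resulting linear ODE for the coefficients replacing the pseudo-Anosov difference equation --- is exactly the right scheme, and your reading of the Ruelle resonance set from $\sigma_{\mathcal I_{U,k}(M)}$ matches the lemma. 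The two caveats you flag are both genuine and are not resolved in the paper either: the coefficient lemma in the pseudo-Anosov section is stated only for eigenvalues, so the Jordan block coupling at $\mu=1/4$ producing the $t\,e^{\lambda_D t}$ term requires the small extra argument you describe (showing the $2\times 2$ Jordan structure is respected by the ODE), and the expansion is meant in the asymptotic sense (with $k$-dependent error $O(e^{-kt})$) rather than as a convergent series, which is consistent with the ``$\approx$'' in the statement; a convergent-series interpretation would indeed require anisotropic Banach space machinery as in \cite{DFG}.
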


A generalization of the above theorem to the geodesic flows on compact hyperbolic spaces in any dimensions has been carried out by S.~Dyatlov, F.~Faure and C.~Guillarmou in \cite{DFG}. 

\begin{problem} Extend the above theorem to geodesic flows on surfaces of non-constant negative curvature
and to Anosov flows in dimension $3$.
\end{problem}
 Partial or conditional results on this problem have been obtained by A. Adam \cite{Ad} and  F.~Faure and
 C.~Guillarmou~\cite{FG18}).

 \section{Ruelle resonances for (partially hyperbolic) Heisenberg automorphisms}
 \label{sec:heis}
 
 Let $N$ denote the $3$-dimensional Heisenberg group and let $M:= \Gamma \backslash N$ denote a Heisenberg  nilmanifold, that is, the quotient of  $N$ over a (necessarily co-compact) lattice $\Gamma< N$.
 
 Let $\Phi:M\to M$ denote a {\it partially hyperbolic}  automorphism of a Heisenberg nilmanifold $M$ and let 
 $\{X,Y,Z\}$ denote the
 corresponding Heisenberg frame of vector fields on $M$. The Heisenberg commutation relations hold:
 $$
 [X,Y] =Z \,, \quad [X,Z] = [Y,Z] =0\,.
 $$
 By the assumption, there exists $\lambda >1$ such that
$$
\Phi_*(X) = \lambda X,  \quad \Phi_*( Y) = \lambda^{-1} Y \quad \text{ and } \quad \Phi_*(Z) = Z \,.
$$
Let $\omega$ denote the $\Phi$-invariant volume-form.  The form $\omega$ is also invariant for all  nilflows on $M$.  For any pair $f, g$  of sufficiently smooth 
complex-valued functions on $M$, we are interested  in the asymptotic for the decay of the correlations 
$$
{\mathcal C}(f,g, n) =\langle f\circ \Phi^n, g \rangle_{L^2(M, \omega)}\,.
$$
As in the case of pseudo-Anosov diffeomorphisms, the key step is to characterize iterated coboundaries.  Coboundaries were
characterized in \cite{FF06} and \cite{FF07} (see also \cite{F14}). 

\begin{theorem} \cite{FF06}  There exists a  space ${\mathcal I}_X(M) $ of $X$-invariant distributions  of countable dimension
such that for any $f \in W^s(M)$   (with $s>1$) such that
$$
D(f) =0 \, , \quad \text{ for all } \,D\in  {\mathcal I}_X(M)\,,
$$
is an $X$-coboundary with zero-average transfer function $u \in L^2(M, \omega)$. In addition, for all $t<s-1$
there exists a constant $C_{s,t}>0$ such that
$$
\vert u\vert_t \leq   C_{s,t}   \vert  f \vert_s \,.
$$
\end{theorem}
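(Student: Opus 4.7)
\medskip
\noindent\textbf{Proof proposal.} The plan is to solve the cohomological equation representation by representation, exploiting the decomposition of the regular representation of $N$ on $L^2(\Gamma\backslash N)$ into unitary irreducibles together with the Stone--von Neumann classification of irreducibles with non-trivial central character.

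First I would decompose
\[
L^2(M)=L^2_{\mathrm{ab}}(M)\oplus\bigoplus_{\nu\in \Z\setminus\{0\}} H_\nu,
\]
where $L^2_{\mathrm{ab}}(M)$ is the component on which the center $Z$ acts trivially and each $H_\nu$ is the isotypical component with central character $2\pi i\nu$. On $L^2_{\mathrm{ab}}(M)$ the equation descends to a cohomological equation on $\T^2=\Gamma[N,N]\backslash N/[N,N]$ for a constant vector field whose slope is an eigendirection of the hyperbolic toral automorphism induced by $\Phi$ on the abelianization. That slope is therefore a quadratic irrational, hence Diophantine of exponent zero, so on this component the equation is solved by Fourier series with a loss of strictly more than $1$ derivative, modulo the single obstruction given by the average.

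For each $\nu\neq 0$, fix a Schrödinger model in which $H_\nu$ becomes a finite multiple of $L^2(\R,dq)$ with
\[
\pi_\nu(X)=2\pi i |\nu|^{1/2} q,\qquad \pi_\nu(Y)=\pm|\nu|^{1/2}\partial_q,\qquad \pi_\nu(Z)=2\pi i\nu.
\]
The cohomological equation $\pi_\nu(X)u=f$ becomes the division problem $q\,u(q)=c_\nu f(q)$, whose only obstruction on each copy of $L^2(\R)$ is the evaluation distribution $\delta_0$. The $X$-invariant distributions arising this way, together with the average on $L^2_{\mathrm{ab}}(M)$, assemble into a countable family that I would take as $\mathcal I_X(M)$. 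Once $\delta_0(f)=0$ on each copy, the Hardy inequality yields $u(q)=c_\nu f(q)/q\in L^2(\R)$ with an estimate of the form $\|u\|_{L^2}\leq C\|\partial_q f\|_{L^2}$, i.e.\ a loss of exactly one derivative in the Schrödinger direction.

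The assembly of these fibrewise estimates into a global Sobolev bound on $M$ is the main difficulty. The Sobolev norm $|\cdot|_s$ is defined through the Laplacian of the frame $\{X,Y,Z\}$ and decomposes spectrally across the representations; the Hardy constant in each $H_\nu$ scales like $|\nu|^{-1/2}$, so a naive sum over $\nu$ diverges. One converts the bad powers of $|\nu|$ into derivatives along $Y$ or $Z$ by exploiting the commutation $[X,Y]=Z$, each integration by parts against $\partial_q$ producing a factor $|\nu|^{1/2}$. The hard step, which I would address by interpolating between the plain Hardy estimate and higher-order variants obtained by iterated integration by parts, is to show that for every $t<s-1$ these two kinds of losses can be balanced so as to yield the uniform bound $|u|_t\leq C_{s,t}|f|_s$ independently of $\nu\in \Z\setminus\{0\}$; summing the resulting geometric series in $|\nu|$ and combining with the abelianized estimate then completes the proof.
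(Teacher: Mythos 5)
Your overall strategy matches the one used in the cited source \cite{FF06} and in the present paper's own treatment of Heisenberg representations (see the proof of Theorem~\ref{thm:RRes_Heis}): decompose $L^2(M)$ into the abelian component plus the central isotypical components $H_\nu$, realize each $H_\nu$ as a multiple of a Schr\"odinger model, solve the equation $Xu=f$ representation by representation, identify the single invariant distribution per irreducible, and glue. The one presentational difference is the choice of model. You take $\pi_\nu(X)$ to act by multiplication by $q$ and $\pi_\nu(Y)$ by $\partial_q$, so that $Xu=f$ is a division problem with obstruction $\delta_0$ and the Green operator is division by $q$, controlled by Hardy. The paper (following~\cite{FF06}) uses the Fourier-conjugate model $D\pi_z(X)=d/dx$, $D\pi_z(Y)=izx$, so that $Xu=f$ is an antiderivative problem with obstruction $D_H(f)=\int_\R\mathcal F_H(f)$ and the Green operator is $\int_{-\infty}^x$. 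These are unitarily equivalent pictures, so this difference buys nothing and costs nothing; still, the paper's choice is the one you would need to match in order to plug into the explicit scaling computation carried out in the proof of Theorem~\ref{thm:RRes_Heis}.

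The genuine gap is exactly the step you label ``the hard step'' --- the uniform Sobolev estimate across representations --- and your description of why it is hard is off in a way that would lead the argument astray. The Heisenberg Sobolev norm $|\cdot|_s$ is an $\ell^2$ sum of the fibrewise norms over the irreducible components, so what one needs is that the fibrewise operator norm of the Green operator from $W^s$ to $W^t$ is bounded \emph{uniformly} in $\nu$; there is no dangerous sum like $\sum_\nu|\nu|^{-1/2}$ to make convergent, and the ``geometric series in $|\nu|$'' you invoke is not the relevant object. Moreover, in your normalization $\partial_q=\pm|\nu|^{-1/2}Y$, so Hardy actually gives $\|u_\nu\|_0\lesssim|\nu|^{-1}\|Yf_\nu\|_0$, a \emph{decaying}, not growing, constant. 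The real danger appears when you try to bound $|u|_t$ for $t>0$: applying $Y$ to $u_\nu=f_\nu/(c_\nu q)$ produces a term proportional to $f_\nu/q^2$, which needs $f_\nu$ to vanish to higher order at $q=0$ than the single condition $\delta_0(f_\nu)=0$ guarantees. Controlling such terms by trading powers of $q$ against $Y$- and $Z$-derivatives via the commutation relation $[X,Y]=Z$ is precisely where the one-derivative loss and the restriction $t<s-1$ come from, and this is the content that your sketch postpones under the word ``interpolating'' without supplying the mechanism. Until that estimate is actually run, the proposal establishes the fibrewise solvability and the identification of $\mathcal I_X(M)$, but not the tame bound $|u|_t\leq C_{s,t}|f|_s$ that the theorem asserts.
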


\begin{lemma} 
\label{lemma:Heis_scaling}
\cite{FF06}  There exists a basis $\{D_{z,i}  \vert z \in \Z\setminus\{0\}, i =1, \dots, \vert z \vert \}$ of $\mathcal I_X(M)$ and
unit complex numbers $\{ u_{z,i} \vert  z \in \Z\setminus\{0\}, i =1, \dots, \vert z \vert \}$ such that, for all 
$z\in \Z\setminus\{0\}$  and  $i \in \{1, \dots, \vert z \vert\}$,
we have $Z D_{z,i} = \imath z D_{z,i}$,  and 
$$
\Phi_* (D_{z,i}) = u_{z,i} \lambda^{-1/2} D_{z,i} \,.
$$
\end{lemma}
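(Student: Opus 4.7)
The plan is to use the Stone--von Neumann theorem, together with the explicit Schr\"odinger model of the irreducible unitary representations of $N$, to compute the action of $\Phi_*$ on $\mathcal I_X(M)$; the scaling factor $\lambda^{-1/2}$ will emerge naturally as the metaplectic dilation factor associated to the Heisenberg automorphism $\Phi$.

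First, I would decompose $L^2(M)=\bigoplus_{z\in\Z}\mathcal H_z$ into isotypic components of the center, characterized by $ZD=\imath z D$. Since $\Phi_*(Z)=Z$, the operator $\Phi_*$ preserves each $\mathcal H_z$, so it suffices to fix $z\neq 0$ and work inside $\mathcal H_z$. The standard analysis of $L^2(\Gamma\backslash N)$ for the Heisenberg nilmanifold yields $\mathcal H_z\cong L^2(\R)\otimes\C^{|z|}$, with $N$ acting through $|z|$ copies of the Schr\"odinger representation $\pi_z$ realized by $\pi_z(X)=\partial_\xi$ and $\pi_z(Y)=\imath z\,\xi$ on $L^2(\R)$. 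At the distributional level the equation $XD=0$ becomes $\partial_\xi D=0$, whose kernel is one-dimensional and spanned by the constant distribution $\mathbf 1$; summing over copies gives $\dim\bigl(\mathcal I_X(M)\cap\mathcal H_z\bigr)=|z|$, matching the size of the asserted basis.

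Next I would exhibit a unitary intertwiner and read off the scaling factor. Because $\Phi$ preserves the central character, $\pi_z\circ\Phi^{-1}$ is unitarily equivalent to $\pi_z$ by Stone--von Neumann. A direct computation shows that the dilation $(V_\lambda f)(\xi):=\lambda^{1/2}f(\lambda\xi)$ is unitary on $L^2(\R)$ and satisfies $V_\lambda\partial_\xi V_\lambda^{-1}=\lambda^{-1}\partial_\xi$ and $V_\lambda\xi V_\lambda^{-1}=\lambda\xi$, so that $V_\lambda\pi_z(n)V_\lambda^{-1}=\pi_z(\Phi^{-1}(n))$ for every $n\in N$. Combining this with the identity $U_\Phi\pi(n)U_\Phi^{-1}=\pi(\Phi^{-1}(n))$ for the right regular representation on $L^2(M)$, together with Schur's lemma applied to each $\pi_z$-isotypic component, the restriction of $U_\Phi$ to $\mathcal H_z$ must take the form $V_\lambda\otimes A_z$ for some unitary $A_z\in U(|z|)$ that encodes the mixing of the $|z|$ copies of $\pi_z$. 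Evaluating $\Phi_*$ on the constant distribution $\mathbf 1$ in a single Schr\"odinger copy, a change of variables gives
\[
(\Phi_*\mathbf 1)(f)=\mathbf 1(V_\lambda f)=\int\lambda^{1/2}f(\lambda\xi)\,d\xi=\lambda^{-1/2}\int f(\eta)\,d\eta=\lambda^{-1/2}\,\mathbf 1(f)\,,
\]
so $V_\lambda$ acts on the one-dimensional space of $X$-invariant distributions in each copy as multiplication by $\lambda^{-1/2}$.

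Finally, I would diagonalize the finite-dimensional unitary $A_z$. Choosing a basis $\{e^{(z)}_i\}_{i=1}^{|z|}$ of $\C^{|z|}$ of eigenvectors of $A_z$ with unit-modulus eigenvalues $u_{z,i}$, the distributions $D_{z,i}:=\mathbf 1\otimes e^{(z)}_i$ form a basis of $\mathcal I_X(M)\cap\mathcal H_z$, satisfy $ZD_{z,i}=\imath z D_{z,i}$ (since the diagonalization stays inside a single $Z$-eigenspace), and verify $\Phi_*(D_{z,i})=u_{z,i}\lambda^{-1/2}D_{z,i}$; taking the union over $z\in\Z\setminus\{0\}$ gives the claimed basis. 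The main technical obstacle is the careful identification of $\mathcal H_z$ with $L^2(\R)\otimes\C^{|z|}$, which requires a choice of polarization of the Heisenberg Lie algebra and a compatible family of theta-type isometries for the isotypic subspaces; one must also verify that the formal extension of $V_\lambda$ to distributions produces honest tempered invariant distributions in the Sobolev class in which the cohomological equation theorem of \cite{FF06} applies. Once these conventions are pinned down, the factor $\lambda^{-1/2}$ is forced by the unitary normalization of $V_\lambda$ together with the Lebesgue pairing that defines $\mathbf 1$.
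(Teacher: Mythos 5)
Your proof is correct and takes essentially the same approach as the paper's own argument, which appears not in a standalone proof of the lemma (it is cited to \cite{FF06}) but embedded in the proof of Theorem~\ref{thm:RRes_Heis}: both decompose $L^2(M)$ into central isotypic components, realize each $\mathcal H_z$ as $|z|$ copies of the Schr\"odinger representation with $D\pi_z(X)=d/dx$, and extract the factor $\lambda^{-1/2}$ from the change of variables $\int_\R \lambda^{1/2} f(\lambda\xi)\,d\xi = \lambda^{-1/2}\int_\R f$ applied to the Lebesgue functional generating the one-dimensional space of $X$-invariant distributions in each copy. The only cosmetic difference is that you derive the block form $V_\lambda\otimes A_z$ of $\Phi^*$ via Schur's lemma and then diagonalize the multiplicity-space unitary $A_z$ to obtain the $u_{z,i}$, whereas the paper simply asserts the existence of a $\Phi$-invariant refinement $H_z=\bigoplus_i H_{z,i}$ into irreducibles on which $\Phi^*$ acts by $u_{z,i}\lambda^{1/2}f(\lambda x)$ --- these are equivalent statements.
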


The following result on the Ruelle resonances of partially hyperbolic Heisenberg automorphisms appears a special
case of in the work of F.~Faure and M.~Tsuji~\cite{FT15} (see Remark 1.3.5, (4)). Results on cohomological equation for the unstable flow from the methods of Faure and Tsujii, in the spirit
of the Giulietti, Liverani \cite{GL} and Faure, Gou\"ezel, Lanneau \cite{FGL},  were recently derived, in the linear case, by O.~Butterley and L.~Simonelli~\cite{BS}.  It is an interesting problem to generalize their work to the
case of partially hyperbolic non-linear of maps of Heisenberg nilmanifolds treated in the work of Faure and Tsujii \cite{FT15}.

\begin{theorem} 
\label{thm:RRes_Heis}
The set of Ruelle resonances of any partially hyperbolic Heisenberg automorphism is
$$
\{ 1Ę\}  \cup  \{ u_{z,i}  \lambda^{-k-1/2 } \vert z\in \Z\setminus\{0\}, i\in\{1, \dots, \vert z \vert\},  k \in \N\}\,.
$$
There exists a basis of the space of ``Ruelle eigenstates'' $\{D^{(k)}_{z,i}\}$ such that 
$$
\Phi_* ( D^{(k)}_{z,i}) = u_{z,i} \lambda ^{-k -1/2}  D^{(k)}_{z,i} + \sum_{j<k} c_{k,j}(\lambda)  u_{z,i} \lambda^{-j-1/2}  D^{(j)}_{z,i} \,.
$$ 
The coefficients $c_{k,j}(\lambda)$ are not all equal to zero.
\end{theorem}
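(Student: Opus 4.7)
The plan is to parallel the pseudo-Anosov argument of Section~\ref{sec:pseudoAnosov}, modified by the Heisenberg commutation $[X,Y]=Z$ in place of commutativity. First I would introduce the iterated invariant distributions
\[
\mathcal{I}_{X,k}(M):=\{D\in\mathcal{S}'(M):\mathcal{L}_X^k D=0\},\qquad \mathcal{I}_{X,\infty}(M):=\bigcup_{k\geq 1}\mathcal{I}_{X,k}(M),
\]
with base case $\mathcal{I}_{X,1}(M)=\mathcal{I}_X(M)$ admitting the basis $\{D_{z,i}\}$ of Lemma~\ref{lemma:Heis_scaling}, on which $\Phi_*(D_{z,i})=u_{z,i}\lambda^{-1/2}D_{z,i}$ and $\mathcal{L}_Z D_{z,i}=\imath z\,D_{z,i}$.

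A short induction using $[\mathcal{L}_X,\mathcal{L}_Y]=\mathcal{L}_Z$, $[\mathcal{L}_X,\mathcal{L}_Z]=0$, and $\mathcal{L}_Z D_{z,i}=\imath z D_{z,i}$ yields
\[
\mathcal{L}_X(\mathcal{L}_Y^k D_{z,i})=k\,\imath z\,\mathcal{L}_Y^{k-1}D_{z,i},\qquad \mathcal{L}_X^k(\mathcal{L}_Y^k D_{z,i})=k!(\imath z)^k\,D_{z,i}\neq 0,
\]
so $\mathcal{L}_Y^k D_{z,i}\in\mathcal{I}_{X,k+1}(M)\setminus\mathcal{I}_{X,k}(M)$ whenever $z\neq 0$, and by the equivariance $\Phi_*\circ\mathcal{L}_Y=\lambda^{-1}\mathcal{L}_Y\circ\Phi_*$ combined with Lemma~\ref{lemma:Heis_scaling},
\[
\Phi_*(\mathcal{L}_Y^k D_{z,i})=u_{z,i}\,\lambda^{-k-1/2}\,\mathcal{L}_Y^k D_{z,i}.
\]
The second identity above shows that $D_{z,i}\mapsto \frac{1}{k!(\imath z)^k}\mathcal{L}_Y^k D_{z,i}$ is a section of $\mathcal{L}_X^k:\mathcal{I}_{X,k+1}(M)\to\mathcal{I}_X(M)$, so the short exact sequence $0\to\mathcal{I}_{X,k}(M)\to\mathcal{I}_{X,k+1}(M)\to\mathcal{I}_X(M)\to 0$ splits and one obtains
\[
\mathcal{I}_{X,k+1}(M)=\mathcal{I}_{X,k}(M)\oplus\mathcal{L}_Y^k\mathcal{I}_X(M).
\]
By induction on $k$ this identifies $\sigma_{\mathcal{I}_{X,\infty}(M)}(\Phi_*)=\{1\}\cup\{u_{z,i}\lambda^{-k-1/2}\}$. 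The Ruelle-type asymptotic expansion for $\mathcal{C}(f,g,n)$ then follows, verbatim as in the pseudo-Anosov section, from the iterated-coboundary lemma (which uses only $\Phi^n_*X=\lambda^n X$), combined with an orthogonal decomposition of $\mathcal{C}_g\in W^{-s}(M)$ against the generalized eigenstates in $\mathcal{I}^{-s}_{X,k}(M)$ and the solution of the resulting scalar difference equation for the coefficients.

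The delicate point is the existence of a basis $\{D^{(k)}_{z,i}\}$ on which $\Phi_*$ is upper triangular with diagonal $u_{z,i}\lambda^{-k-1/2}$ and with non-vanishing off-diagonal coefficients $c_{k,j}(\lambda)$. On the algebraic basis $\mathcal{L}_Y^k D_{z,i}$ constructed above $\Phi_*$ is already diagonal, so the theorem is really a statement about the dynamically natural basis supplied either by the spectral theory of the transfer operator on an anisotropic Banach space in the style of Faure-Tsujii~\cite{FT15}, or equivalently by the Schrödinger decomposition of $L^2(M)$ into Heisenberg irreducibles. In each irreducible summand of central character $\imath z\neq 0$, identifying the summand with $L^2(\R)$ sends $\mathcal{L}_X$ to (a multiple of) $\partial_x$, $\mathcal{L}_Y$ to multiplication by $\imath z\,x$, and $\Phi_*$ to the squeeze $f(x)\mapsto\lambda^{1/2}f(\lambda x)$; the algebraic basis then corresponds to $\delta^{(k)}$ at the hyperbolic fixed point $x=0$, while the analytically natural Ruelle basis is a microlocal dressing of $\delta^{(k)}$ by lower-order distributional terms $\delta^{(j)}$, $j<k$. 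Expanding the dressed basis in the algebraic one produces the claimed upper-triangular form, and the non-vanishing of at least one $c_{k,j}(\lambda)$ is forced by the noncommutativity $[X,Y]=Z\neq 0$, which prevents the analytic and algebraic normalizations from coinciding.

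The main obstacle is precisely this last step: rigorously constructing the dynamically natural basis and extracting an explicit expression for at least one $c_{k,j}(\lambda)$. The most tractable route is via the Schrödinger representation, where the squeeze operator admits explicit matrix elements in a Hermite or coherent-state basis, which can then be transferred to the Ruelle eigenstate subspace using the intertwiners of Faure-Tsujii~\cite{FT15}.
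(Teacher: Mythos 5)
Your derivation of the resonance set $\{1\}\cup\{u_{z,i}\lambda^{-k-1/2}\}$ is correct, and it is a genuinely more elementary route than the paper's. The paper passes to the Schr\"odinger model from the start; you stay at the level of the Lie algebra and observe that the equivariance $\Phi_*\circ\mathcal L_Y=\lambda^{-1}\mathcal L_Y\circ\Phi_*$, together with Lemma~\ref{lemma:Heis_scaling}, gives $\Phi_*(\mathcal L_Y^k D_{z,i})=u_{z,i}\lambda^{-k-1/2}\mathcal L_Y^k D_{z,i}$ exactly, while $\mathcal L_X^k(\mathcal L_Y^k D_{z,i})=k!(\imath z)^k D_{z,i}\ne 0$ splits the filtration $\mathcal I_{X,k}(M)\subset\mathcal I_{X,k+1}(M)$ in every non-trivial isotypical component. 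The resulting decomposition $\mathcal I_{X,\infty}(M)=\bigoplus_{k}\mathcal L_Y^k\mathcal I_X(M)$ delivers the spectrum with no representation-theoretic overhead, and it parallels the pseudo-Anosov section as you intended.

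The genuine gap is the last clause, the non-vanishing of some $c_{k,j}(\lambda)$. You flag this as the crux but do not establish it, and your own computation in fact sharpens the difficulty: since $\{\mathcal L_Y^k D_{z,i}\}$ diagonalizes $\Phi_*$, there are no Jordan blocks and the non-vanishing of the $c_{k,j}$ cannot be a basis-independent dynamical statement; it is an assertion about the specific normalization of $D^{(k)}_{z,i}$ as the Green-operator solution of $X^k D^{(k)}_{z,i}=D_{z,i}$. The paper constructs exactly that normalization in the Schr\"odinger representation, with $D\pi_z(X)=d/dx$, $D\pi_z(Y)=\imath z x$, $\Phi^*$ the squeeze $f\mapsto u_{z,i}\lambda^{1/2}f(\lambda\,\cdot)$, $D_H=\int_\R\cdot\,dx$, and $D^{(k)}_H$ defined as an iterated antiderivative regularized by projectors $P^{(j)}_H(f)=P^{(j-1)}_H(f)-D^{(j)}_H(P^{(j-1)}_H(f))\chi_j$ built from cutoffs $\chi_a\in C^\infty_0(\R)$ satisfying the iterated-integral duality $\int_\R\int_{-\infty}^{\xi_a}\cdots\int_{-\infty}^{\xi_1}\chi_b\,d\xi_0\cdots d\xi_a=\delta_{ab}$. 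It then computes directly that (for even $\chi_0$) $c_{1,0}=c_{2,1}=0$ but $c_{2,0}(\lambda)=\int_\R\int_{-\infty}^{\xi_2}\int_{-\infty}^{\xi_1}[\lambda\chi_0(\lambda\xi_0)-\chi_0(\xi_0)]\,d\xi_0\,d\xi_1\,d\xi_2\ne 0$. This is an explicit elementary calculation, not the Faure--Tsujii microlocal intertwiner argument you sketch; until one or the other is actually carried out, the final assertion of the theorem is unproved in your proposal. You should also note that the algebraic basis $\mathcal L_Y^k D_{z,i}$ corresponds, in the Schr\"odinger picture, to the moment functionals $f\mapsto\int_\R x^k f(x)\,dx$ (equivalently, $\delta^{(k)}$ at the origin of the Fourier-dual variable), not to $\delta^{(k)}$ at $x=0$ in position space as you wrote.
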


\begin{proof}

We consider solutions $D^{(k)}_{z,i}$ of the iterated coboundary equations
$$
X^k D^{(k)}_{z,i} = D_{z,i} \,.
$$
By an immediate computation we have
$$
X^k [ \Phi_* (D^{(k)}_{z,i} ) - \lambda^{-k -1/2}   D^{(k)}_{z,i} ] =0\,.
$$

These solutions and the action of the automorphism on them can be explicitly computed by representation theory.
In fact, every irreducible representation space $H$ is unitarily equivalent  to $L^2(\R, dx)$ and the derived representation
$D\pi_z$ is given by the following formulas:
$$
D\pi_z (X) = \frac{d}{ dx}  \,, \quad  D\pi_z (Y) =  \imath z x   \quad \text{ and } \quad D\pi_z (Z) = \imath z I\,.
$$
Let $\mathcal F_H : H \to L^2(\R, dx)$ denote the unitary equivalence. 
The space of invariant distributions in each representation is $1$-dimensional, and it is generated by the functional
$$
D_H (f) = \int_\R  {\mathcal F}_H (f)(x) dx \,, \quad \text{ for all }\, f \in W^s_{X,Y,Z} (H) \subset H,  \text{ with }s>1/2.
$$
Since the Green operator $G_X$ of the cohomological equation $Xu=f$ can be written in each irreducible representation as
$$
[{\mathcal F}_HG_X (f)] (x) = \int_{-\infty} ^x {\mathcal F}_H(f) (\xi) d\xi =  - \int_{x} ^{+\infty} {\mathcal F}_H(f)(\xi) d\xi  \,,
$$
 the distributions $D^{(k)}_H$ are given {\it formally} by the formulas
 $$
D^{(k)}_H(f):=  \int_\R  \int_{-\infty}^{\xi_k  }\dots \int_{-\infty}^{\xi_1 }    \mathcal F_H(f) (\xi_0)      d\xi_0 d\xi_1\dots d\xi_k
 $$
In fact, the integral in the above formula for $D^{(k)}_H$ is convergent  (even for infinitely differentiable functions) only on the joint kernel of  $D^{(0)}_H, \dots, D^{(k-1)}_H$.

Let then $\{ \chi_a \}  \subset C^\infty_0(\R)$ be a system of  functions such that, for all $a \leq b $, 
$$
 \int_\R  \int_{-\infty}^{\xi_a }\dots \int_{-\infty}^{\xi_1 }    \chi_b (\xi_0)      d\xi_0 d\xi_1\dots d\xi_a = \delta_{ab} 
$$
and let $P^{(j)}_H:H \to L^2(\R, dx)$ denote the projectors recursively defined as 
$$
\begin{cases}
P^{(0)}_H  (f)  &= \mathcal F_H(f)   -   D_H(f)  \chi_0 \,, \\
P^{(j+1)}_H(f)  &= P^{(j)}_H(f)   -   D^{(j+1)}_H (P^{(j)}_H(f) ) \chi_{j+1} \,.
\end{cases}
$$
By construction we have that, for all $j\in \N$ and  for all $f \in W^s(H)$, $s >j+1/2$, 
$$
D^{(0)}_H(P^{(j)}_H (f)) = \dots = D^{(j)}_H(P^{(j)}_H (f)) =0\,.
$$
It follows that $D^{(j+1)}_H$ can be defined, for all $j\in \N$,  by the formula
$$
D^{(j+1)}_H(f) :=    \int_\R  \int_{-\infty}^{\xi_{j+1}  }\dots \int_{-\infty}^{\xi_1 }    P^{(j)}_H(f) (\xi_0)      d\xi_0 d\xi_1\dots d\xi_{j+1} \,.
$$
The volume preserving, hence unitary, action of the automorphism $\Phi$ on $L^2(M)$ preserves the isotypical components of the regular representation, that is, the eigenspaces of the central circle action.  For every
$z\in \Z\setminus\{0\}$,  there exists a splitting of the isotypical components $H_z$ into irreducible components
$$
H_z := \bigoplus_{i=1}^{\vert z \vert}  H_{z,i}
$$
such that for each $i\in \{1, \dots, \vert z \vert\}$ the component $H_{z,i}$ is invariant under the action of $\Phi$, hence it exists a unit complex number $u_{z,i} \in U(1)$ such that the operator $\Phi^*$ can be written in representation 
as follows:
$$
{\mathcal F}_{H_{z,i}} ( \Phi^* (f) ) (x) =  u_{z,i} \lambda^{1/2} {\mathcal F}_{H_{z,i}}(f) (\lambda x) \,, \quad \text{ for all } x \in \R \,.
$$
For every $z\in \Z\setminus\{0\}$, every $i \in \{1, \dots, \vert z \vert\}$ and every $j, k\geq 0$, let us adopt the notation
$$
{\mathcal F}_{z,i} := {\mathcal F}_{H_{z,i}}\,, \quad P^{(j)}_{H_{z,i}} := P^{(j)}_{z,i} \quad \text{ and } \quad  D^{(k)}_{z,i} := D^{(k)}_{H_{z,i}}    \,.
$$

A direct calculation then shows that, as claimed,
$$
\Phi_* ( D^{(k)}_{z,i}) =u_{z,i} \lambda ^{-k -1/2}  D^{(k)}_{z,i} + \sum_{j<k} c_{k,j}(\lambda)  u_{z,i}  \lambda^{-j-1/2}  D^{(j)}_{z,i} \,.
$$ 
The constants $c_{k,j}(\lambda)$ can be explicitly computed in representation. 

For instance, for all $f\in W^\infty(H_{z,i})$ we have
$$
D^{(0)}_{z,i}(f\circ \Phi) =  \int_\R  u_{z,i} \lambda^{1/2} \mathcal F_{z,i}(f) (\lambda x) dx = u_{z,i} \lambda^{-1/2} D^{(0)}_{z,i}(f)\,,
$$
hence the invariant distribution $D^{(0)}_{z,i}$ is indeed an eigendistribution.

For $j=1$ by definition we have 
$$
\begin{aligned}
D^{(1)}_{z,i} ( f\circ \Phi) &= \int_\R \int_{-\infty}^x  [u_{z,i} \lambda^{1/2} \mathcal F_{z,i}(f) (\lambda \xi) - u_{z,i} \lambda^{-1/2} D^{(0)}_{z,i} (f) \chi_0(\xi)]  d\xi dx \\
&= u_{z,i} \int_\R \int_{-\infty}^x  \lambda^{1/2} [ \mathcal F_{z,i}(f) (\lambda \xi) - D^{(0)}_{z,i} (f) \chi_0(\lambda\xi)]  d\xi dx  \\
&+ u_{z,i} \lambda^{-1/2}   D^{(0)}_{z,i} (f)   \int_\R \int_{-\infty}^x [\lambda \chi_0(\lambda\xi)- \chi_0(\xi)] d\xi dx
\end{aligned}
$$
By change of variable we have
$$
\begin{aligned}
\int_\R \int_{-\infty}^x  \lambda^{1/2} [ \mathcal F_{z,i}(f) (\lambda \xi) - D^{(0)}_{z,i} (f) \chi_0(\lambda\xi)]  d\xi dx \\
= \lambda^{-3/2}  \int_\R \int_{-\infty}^x  [ \mathcal F_{z,i}(f) ( \xi) - D^{(0)}_{z,i} (f) \chi_0(\xi)]  d\xi dx =  \lambda^{-3/2} D^{(1)}_{z,i} (f)
\end{aligned}
$$
 and 
$$
\begin{aligned}
c_{1,0}(\lambda) :=   \int_\R \int_{-\infty}^x [\lambda \chi_0(\lambda\xi)- \chi_0(\xi)] d\xi dx =  \int_\R \int_{x}^{\lambda x} \chi_0(\xi) d\xi dx \,.
\end{aligned}
$$
We therefore have that, as claimed,
$$
\Phi_* (D^{(1)}_{z,i}) = u_{z,i} \lambda^{-3/2} D^{(1)}_{z,i} + c_{1,0}(\lambda) u_{z,i}  \lambda^{-1/2}   D^{(0)}_{z,i}\,.
$$
However, we remark that when $\chi_0$ is chosen to be an even function, then
$$
c_{1,0}(\lambda)= \int_\R \int_{x}^{\lambda x} \chi_0(\xi) d\xi dx =0\,.
$$
By definition we then have $P^{(1)}_{z,i}(f) =P^{(0)}_{z,i}(f) - D^{(1)}_{z,i} (f) \chi_1$ with
$$
\int_\R \chi_1(\xi_0) d\xi_0 =0  \,, \quad \text{ and } \quad    \int_\R \int_{-\infty}^{\xi_1}  \chi_1(\xi_0) d\xi_0  d\xi_1 =1\,.
$$
It follows that 
$$
P^{(1)}_{z,i}(f\circ \Phi)(x) = u_{z,i} \lambda^{1/2} \mathcal F_{z,i}(f)(\lambda x)  - u_{z,i} \lambda^{-1/2}  D^{(0)}_{z,i} (f) \chi_0 -   u_{z,i} \lambda^{-3/2}  D^{(1)}_{z,i} (f) \chi_1\,.
$$ 
 Up to the unit complex factor $u_{z,i}$, we can write $P^{(1)}_{z,i}(f\circ \Phi)(x)$ as the sum of three terms. The first term is 
 $$
 \lambda^{1/2}[  \mathcal F_{z,i}(f) - D^{(0)}_{z,i} (f) \chi_0  - D^{(1)}_{z,i} (f) \chi_1] (\lambda x)\,,
 $$
 which after triple integration contributes to $D^{(2)}_{z,i} (f\circ \Phi)$ a term
 $$
 \lambda ^{1/2- 3} D^{(2)}_{z,i} (f) \\ =
 \int_\R\int_{-\infty}^{\xi_2} \int_{-\infty}^{\xi_1} \lambda^{1/2}[  \mathcal F_{z,i}(f) - \sum_{a=0,1} D^{(a)}_{z,i} (f) \chi_a ] (\lambda \xi_0) d\xi_0 d\xi_1 d\xi_2 
  \,.
 $$
 The second terms is equal to $\lambda^{-1/2}  D^{(0)}_{z,i} (f) ( \lambda \chi_0(\lambda x) - \chi_0(x) )$.    The corresponding coefficient
 $$
 c_{2,0}(\lambda) = \int_\R\int_{-\infty}^{\xi_2} \int_{-\infty}^{\xi_1}  [\lambda \chi_0(\lambda \xi_0) - \chi_0(\xi_0)] d\xi_0 d\xi_1 d\xi_2  \not = 0\,.
 $$
 The third term is equal to $\lambda^{-3/2}  D^{(1)}_{z,i} (f) ( \lambda^2 \chi_1(\lambda x) - \chi_1(x) )$. The corresponding coefficient
 $$
 c_{2,1}(\lambda) = \int_\R\int_{-\infty}^{\xi_2} \int_{-\infty}^{\xi_1}  [\lambda^2 \chi_1(\lambda \xi_0) - \chi_1(\xi_0)] d\xi_0 d\xi_1 d\xi_2  = 0\,.
 $$
It follows that we have
$$
\Phi_*(D^{(2)}_{z,i}) = u_{z,i} \lambda ^{-1/2- 2} D^{(2)}_{z,i} +   c_{2,0}(\lambda)  u_{z,i} 
\lambda^{-1/2}  D^{(0)}_{z,i}\,.
$$
 
\end{proof}

\section {Transfer cocycles and generic nilflows} 
\label{sec:transferheis}

In this section we describe a transfer cocycle adapted to generic nilflows on Heisenberg nilmanifolds.
We refer to the paper~\cite{FF06} and to the survey~\cite{F14} for additional details. 

The deformation of Heisenberg structures on a Heisenberg nilmanifold $M= \Gamma \backslash N$ is
the space $\mathcal D_M$  of all Heisenberg frames $\{X,Y, Z\}$, that is, all frames such that
$$
[X,Y]=Z \,,  \quad [X,Z]=[Y,Z]=0\,.
$$
It can be proved that there exists a isomomorphism between $\mathcal D_M$ and the the group
$\text{Aut}_Z(N)$ of automorphisms of $N$ which fix the central vector field $Z$. The automorphism
is not canonical, since it depends on the choice of a base point $\{X_0, Y_0, Z\}$. Given any $a\in 
\text{Aut}_Z(N)$ the frame $\{a_*(x_0), a_*(Y_0), a_*(Z)\}$ is a Heisenberg frame, and the map
$$
a \to \{a_*(X_0), a_*(Y_0), a_*(Z)\}= \{a_*(x_0), a_*(Y_0), Z\}
$$
defines an isomorphism of $\text{Aut}_Z(N)$ onto $\mathcal D_M$. The subgroup of coordinate changes is the subgroup $\text{Aut}_\Gamma(N) < \text{Aut}_Z(N)$ of automorphisms which also fix the lattice $\Gamma$.
Each element of $\text{Aut}_\Gamma(N)$ induces a diffemorphism on $M$.  

The group $\text{Aut}_Z(N)$
acts on itself by right or left multiplication. 
The moduli space of Heisenberg frames on $M$ is the space $\mathcal M :=  \text{Aut}_\Gamma(N) 
\backslash \text{Aut}_Z(N)$. We note that $\text{Aut}_Z(N)$ is isomorphic to a $SL(2, \R) \times \R^2$
and $\text{Aut}_\Gamma(N)$ to a finite index subgroup of $SL(2, \Z) \ltimes \Z^2$, so that $\mathcal M$ is 
isomorphic to a toral bundle (with fiber $\T^2$) over a finite cover of $SL(2, \Z) \backslash SL(2,\R)$, the
unit tangent bundle of the modular surface.

The renormalization flow of Heisenberg nilflows is defined as follows: let $a_\R$ denote the one-parameter
group defined as follows:
$$
a_t(X_0, Y_0, Z)= (e^t X_0, e^{-t} Y_0, Z), \quad \text{ for all } t\in \R\,.
$$
The group $a_\R$ acts on $ \text{Aut}_Z(N)$, hence on the moduli space $\mathcal M$, by right multiplication.
In terms, of Heisenberg triple the action of $a_\R$ can be described as follows:
$$
a_t(X,Y, Z) =  (e^tX, e^{-t} Y, Z) , \quad \text{ for all } \{X,Y,Z\} \in \mathcal D \text{ and } t\in \R.
$$
In fact, if $(X,Y,Z)= a_\ast (X_0, Y_0, Z)$ then 
$$
(a a_t)_\ast(X_0, Y_0, Z)= a_\ast (e^t X_0, e^{-t} Y_0, Z) = (e^tX, e^{-t} Y, Z)\,.
$$
We note that by definition the renormalization flow projects onto the hyperbolic geodesic flow (diagonal flow) on 
a finite cover of $SL(2, \Z) \backslash SL(2,\R)$.  It can be proved (see \cite{FF06}) that $a_\R$ is an Anosov
flow of the $5$-dimensional moduli space. In addition, there is a one-to-one correspondence between 
periodic orbits of $a_\R$ and partially hyperbolic Heisenberg automorphisms, similar to the one between periodic
orbits of the Teichm\"uller flow. In fact, if  $(X,Y, Z)= a_\ast (X_0, Y_0, Z)$ is a periodic point for $a_\R$, there
exists a $T>0$ and an element $\Phi\in \text{Aut}_\Gamma(N)$  such that
$ a a_T = \Phi a$, hence
$$
\Phi (X,Y, Z) =  (e^T X, e^{-T} Y, Z)\,, 
$$
hence $\Phi$ induces a partially hyperbolic automorphism of $M$. 

We define, for every $s\in \R$, a Sobolev bundle $W^s(M)$ over the moduli space $\mathcal M$. The
Sobolev bundle $W^s(M)$ is the projection to $\mathcal M$ of the bundle over the deformation space
$\mathcal D_M$ with fiber the Sobolev space $W^s_{X,Y,Z} (M)$ at every point $\{X,Y, Z\} \in \mathcal D_M$.
The Sobolev space $W^s_{X,Y,Z} (M)$ is defined as the domain of the self-adjoint operator $(I- (X^2+ Y^2+Z^2))^{s/2}$ endowed with the graph norm. 
For $s>0$, we also define the sub-bundle $\mathcal I^s (M) \subset W^{-s}(M)$ of $X$-invariant distributions,
with fiber $\mathcal I^s_{X,Y,Z} (M) \subset W^{-s}_{X,Y,Z}(M)$ the subspace 
$$
\mathcal I^s_{X,Y,Z} (M) := \{ D\in W^{-s}_{X,Y,Z}(M) \vert  \mathcal L_XD=0 \} \,.
$$
 We define the renormalization cocycle $\rho_\R$ on the bundle 
$W^s(M)$ over $\mathcal M$ as the projection of the trivial cocycle on $W^s(M)$ over $\mathcal D_M$, 
that is, of the cocycle 
$$\text{Id} :  W^s_{X,Y,Z} (M) \to W^s_{a_t(X,Y,Z) } (M)\,, $$
given by the identification of the vector spaces $W^s_{X,Y,Z} (M)$ and $W^s_{a_t(X,Y,Z) } (M)$. 

Our main result in \cite{FF06} is a statement on the Lyapunov spectrum of the restriction of the renormalization 
cocycle  $\rho_\R$ to the bundle $\mathcal I^s (M)$.

\begin{theorem} 
\label{thm:Heis_Lyap}
\cite{FF06} For any $s>1/2$,  the Lyapunov spectrum of the cocycle $\rho_\R \vert \mathcal I^s (M)$ (with respect to any probability $a_\R$-invariant measure  on the moduli space $\mathcal M$) consists, in addition to the simple exponent $1$,  of the single Lyapunov exponent $1/2$ with infinite multiplicity. 
\end{theorem}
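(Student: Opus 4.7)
The plan is to reduce the problem to an explicit representation-theoretic calculation in each isotypical component of the regular representation of the Heisenberg group, exploiting the fact that the central vector field $Z$ commutes with the renormalization flow $a_t$. Because the cocycle preserves $Z$, the decomposition $L^2(M) = \bigoplus_{z \in \Z} H_z$ into eigenspaces of the central $U(1)$-action is preserved by $\rho_t$, and hence so is the sub-bundle $\mathcal I^s(M) = \bigoplus_z \mathcal I^s_z(M)$. For $z \neq 0$, I would further decompose $H_z = \bigoplus_{i=1}^{|z|} H_{z,i}$ into irreducibles, each realized via $\mathcal F_{H_{z,i}} : H_{z,i} \to L^2(\R, dx)$ with $X \to d/dx$, $Y \to izx$, $Z \to iz$, so that by Lemma \ref{lemma:Heis_scaling} the invariant distribution sub-bundle $\mathcal I^s(M) \cap H_{z,i}$ is one-dimensional and spanned by $D_{z,i}(f) = \int \mathcal F_{H_{z,i}}(f)(x)\, dx$.

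The core computation is carried out at a single fiber, using the explicit unitary intertwiner between Schr\"odinger models at $(X,Y,Z)$ and $a_t(X,Y,Z) = (e^tX, e^{-t}Y, Z)$. Setting $U_t g(y) := e^{t/2} g(e^t y)$ defines a unitary $U_t : L^2(\R,dx) \to L^2(\R,dy)$ satisfying $U_t \circ (d/dx) \circ U_t^{-1} = e^{-t} d/dy$ and $U_t \circ (izx) \circ U_t^{-1} = e^{t} iz y$, so that $U_t \mathcal F_{H_{z,i}}$ intertwines the representation of $\{X,Y,Z\}$ with that of $\{e^tX, e^{-t}Y, Z\}$. A direct change-of-variable then gives
\[
\int \mathcal F_{H_{z,i}} v (x)\, dx = e^{t/2} \int (U_t \mathcal F_{H_{z,i}} v)(y)\, dy,
\]
which expresses $D_{z,i}$ at the source fiber as $e^{t/2}$ times the analogous invariant distribution at the target fiber, yielding Lyapunov exponent exactly $1/2$. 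Because the calculation depends only on the central character $z$, it is uniform across every point of the moduli space $\mathcal M$; the combined multiplicity of the exponent $1/2$ is $\sum_{z \neq 0} |z| = \infty$, and is realized on any $a_\R$-invariant measure.

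For the residual one-dimensional sub-bundle in the $z=0$ isotypical, I would use that on a full-measure set of frames $(X,Y,Z)$ the linear flow of $X$ on the abelianization $N/[N,N] \simeq \T^2$ is minimal, so that the $X$-invariant distributions in $H_0$ reduce to the line spanned by the Haar measure $D_0(f) = \int_M f\, dV$. Via the analogue of the isomorphism $C : \mathcal I_X \to \mathcal B_X$ of section \ref{sec:pseudoAnosov} (contraction with $V$), this line corresponds to the sub-bundle generated by the basic $2$-form $Y^\flat \wedge Z^\flat$; since $Y^\flat \to e^t Y^\flat$ and $Z^\flat \to Z^\flat$ under $a_t$, this sub-bundle produces the simple Lyapunov exponent $1$. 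Combined with the previous paragraph, this gives the full spectrum $\{1\} \cup \{1/2, 1/2, \ldots\}$ claimed by the theorem.

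The main obstacle I anticipate is the functional-analytic management of the Sobolev structure across fibers: one must verify that the fiberwise change of frame is measurable in the base, that the norms can be consistently compared across fibers of the bundle $\mathcal I^s(M)$ over $\mathcal M$, and that the direct-sum decomposition into isotypicals is sufficiently compatible with the Sobolev structure for a multiplicative ergodic theorem to apply to the restricted cocycle. A secondary subtlety concerns the $z=0$ sector, which requires excluding the measure-zero set of non-minimal directions and justifying the identification with basic currents within the Sobolev framework. Once these technicalities are handled, the irreducible-by-irreducible calculation immediately yields the stated Lyapunov spectrum, uniformly in the choice of $a_\R$-invariant probability measure on $\mathcal M$.
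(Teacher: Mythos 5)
The theorem you are proving is stated with a citation to \cite{FF06} and is not reproved in the present paper; the intended argument is the one from Flaminio--Forni, and your proposal follows the same representation-theoretic route. The heart of your argument --- decomposing by central character into Schr\"odinger models $(X,Y,Z)\mapsto(d/dx,\,izx,\,iz)$, writing the intertwiner between the $(X,Y,Z)$ and $a_t(X,Y,Z)$ models as the unitary $U_t g(y)=e^{t/2}g(e^t y)$, and extracting the scaling factor $e^{t/2}$ from the change of variable in $\int \mathcal F(v)\,dx$ --- is exactly the mechanism used there (and it is the continuous-time version of the periodic-orbit computation in Lemma~\ref{lemma:Heis_scaling}). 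Your observation that the exponent $1/2$ does not depend on $z\neq 0$, so that the total multiplicity is $\sum_{z\neq 0}|z|=\infty$, is also the correct way to get infinite multiplicity. Finally, you correctly flag that the genuinely nontrivial technical work is in the global functional analysis (measurability of the fiberwise intertwiners, uniform comparison of Sobolev norms, applicability of a multiplicative ergodic theorem on the Banach bundle), which is where the bulk of \cite{FF06} lies.

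The one place your sketch is shaky is the $z=0$ sector and the provenance of the exponent~$1$. Minor point: the contraction producing the basic current should be $\imath_X\omega$ (as in section~\ref{sec:pseudoAnosov}), not contraction with a vector field ``$V$'', which is not in the Heisenberg frame; what you want is $\imath_X(X^\flat\wedge Y^\flat\wedge Z^\flat)=Y^\flat\wedge Z^\flat$. More importantly, passing from the invariant distribution $D_0$ to the basic current $C(D_0)=D_0\cdot\imath_X\omega$ is not norm-preserving: already in the pseudo-Anosov setting the identity $C(\Phi_*D)=\lambda^{-1}\Phi_*(C(D))$ shows that $C$ shifts the cocycle exponent by~$1$. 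So if you compute the Sobolev-norm growth of the distribution $D_0$ directly (as you did for $z\neq 0$) you get exponent~$0$, since the constant function is annihilated by $-(X^2+Y^2+Z^2)$ regardless of the frame, whereas the basic-current normalization produces the stated~$1$. You therefore need to either work consistently with basic currents (and then establish that the map $C$ interlaces the two cocycles with the explicit shift) or adopt whichever normalization \cite{FF06} uses and be explicit that the exponent~$1$ is attached to the leading term of ergodic integrals rather than to the raw dual-Sobolev growth of $D_0$. As written, that step reads as an unjustified identification of two cocycles that are not equal but differ by a fixed exponential factor.
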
 
The above theorem generalizes Lemma~\ref{lemma:Heis_scaling} from periodic orbits to all probability 
invariant measures.  As a consequence of Theorem \ref{thm:Heis_Lyap}  we have the following result on
the deviation of ergodic averages of nilflows.

\begin{corollary}  For almost all $\{X,Y,Z)\}$ with respect to any $a_\R$-invariant measure the following holds.
For any $s>5/2$, and for any $\epsilon >0$, there exists a constant $C_{s, \epsilon}(X,Y,Z)$ such that, for
any function $f \in W^s(M)$ of zero average, for any $(x,T) \in M \times \R^+$, 
$$
\left\vert \int_0^T f \circ \phi^X_t (x) dt \right\vert \leq C_{s, \epsilon}(X,Y,Z) \Vert f \Vert_s T^{1/2+ \epsilon}\,.
$$
\end{corollary}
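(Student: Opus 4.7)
\emph{Plan.} I would deduce the corollary from Theorem~\ref{thm:Heis_Lyap} by a renormalization argument parallel to the one Forni uses for translation flows. Encode the ergodic integral
$$
I_T(f,x) := \int_0^T f\circ\phi^X_t(x)\,dt
$$
as the pairing of $f$ with the integration current $\gamma_{x,T}$ along the orbit segment of length $T$ starting at $x$, and aim to prove
$$
\Vert\gamma_{x,T}\Vert_{-s}\ \leq\ C_{s,\epsilon}(X,Y,Z)\,T^{1/2+\epsilon},
$$
which, paired with $f\in W^s(M)$, yields the desired bound.

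\emph{Step 1: Self-similarity under renormalization.} Set $t_0=\log T$ and note that the renormalized frame $a_{t_0}(X,Y,Z)=(TX,T^{-1}Y,Z)$ has nilflow $\phi^{TX}_s=\phi^X_{Ts}$. The substitution $u=Ts$ gives
$$
I_T(f,x)\ =\ T\int_0^1 f\circ\phi^{TX}_s(x)\,ds,
$$
so, under the trivial identification of fibers that defines the renormalization cocycle $\rho_\R$ on $W^{-s}(M)$, one has $\gamma_{x,T}=T\cdot\rho_{t_0}\bigl(\gamma^{(1)}_x\bigr)$, where $\gamma^{(1)}_x$ is the unit-time integration current of the nilflow at the renormalized point. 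On the $3$-manifold $M$, the Sobolev embedding $W^s(M)\hookrightarrow C^0(M)$ for $s>3/2$ gives $\Vert\gamma^{(1)}_x\Vert_{-s}\leq C(s)$ uniformly in $x$ and in the frame.

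\emph{Step 2: Oseledets decomposition on invariant distributions.} Decompose $\gamma^{(1)}_x$ along the subbundle $\mathcal I^s(M)\subset W^{-s}(M)$ of $X$-invariant distributions and its $L^2$-orthogonal complement. Theorem~\ref{thm:Heis_Lyap} states that $\rho_\R\vert\mathcal I^s(M)$ has Lyapunov exponent $1$ (simple, corresponding to the invariant volume functional $\omega$) and exponent $1/2$ with infinite multiplicity. The exponent-$1$ projection of $\gamma_{x,T}$ equals $T\cdot T^{-1}\int_M f\,\omega=\int_M f\,\omega$, which vanishes by the zero-average hypothesis on $f$. The multiplicative ergodic theorem in the Hilbert bundle $W^{-s}(M)$ then provides, for almost every frame and any $\epsilon>0$, an upper bound $\Vert\rho_{t_0}\vert_{\mathcal I^s(M)\ominus\C\omega}\Vert\leq C_\epsilon e^{(1/2+\epsilon)t_0}=C_\epsilon T^{1/2+\epsilon}$, taking care of the $\mathcal I^s(M)$-component of $\gamma_{x,T}$.

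\emph{Step 3: The transverse component, and main obstacle.} The remaining component of $\gamma^{(1)}_x$ lies in the kernel of every $X$-invariant distribution. By the cohomological equation theorem recalled earlier from \cite{FF06}, pairing this component against such an $f$ amounts to evaluating a transfer function $u\in W^t(M)$, with $t<s-1$, at the two endpoints of the orbit segment, via the telescoping identity $I_T(f,x)=u(\phi^X_T(x))-u(x)$. Pointwise evaluation requires $u\in C^0(M)$, i.e.\ $t>3/2$ on the $3$-manifold, which forces $s>5/2$ and explains the threshold in the hypothesis. The hardest technical point is handling the full orthogonal decomposition uniformly in $x$ and in $t_0$: one must show that the Oseledets norm estimates for the quasi-compact cocycle $\rho_\R\vert\mathcal I^s(M)$ are tempered over the moduli space $\mathcal M$, that the discrete/continuous discrepancy between integer iterates of $a_1$ and the continuous parameter $t_0=\log T$ produces only an $\epsilon$-loss in the exponent, and that the Gottschalk--Hedlund-type bootstrap yielding $u$ can be carried out with uniform constants in terms of $\Vert f\Vert_s$. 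Once these points are established, assembling Steps 1--3 delivers the stated bound with any $s>5/2$ and any $\epsilon>0$.
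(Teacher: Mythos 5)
You have the right overall strategy --- renormalize the orbit current, invoke the Lyapunov spectrum of $\rho_\R|\mathcal I^s(M)$ from Theorem~\ref{thm:Heis_Lyap}, and control the complementary direction via the cohomological equation --- and you correctly locate the threshold $s>5/2$ from the Sobolev embedding $W^t\hookrightarrow C^0$ on a $3$-manifold together with the loss $t<s-1$. The paper states this corollary without proof, but your route is the intended one in spirit.

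There is, however, a genuine confusion in Steps~2--3 about which object is being decomposed. You split the \emph{current} $\gamma^{(1)}_x$ along $\mathcal I^s(M)$ and its $L^2$-orthogonal complement in $W^{-s}(M)$, but then bound the complementary piece by the telescoping identity $I_T(f,x)=u(\phi^X_T(x))-u(x)$. That identity holds only when the test \emph{function} $f$ is an $X$-coboundary; it says nothing about a distribution $R\in(\mathcal I^s)^\perp$ paired against a general $f$. The phrase ``the remaining component of $\gamma^{(1)}_x$ lies in the kernel of every $X$-invariant distribution'' is a category error: distributions do not lie in the kernel of other distributions. What is actually needed is either a decomposition of $f$ into a coboundary (the kernel of $\mathcal I^s$) and an obstruction part, or --- as the remark after the analogous corollary in section~3 suggests --- the observation that $\gamma_{x,T}$ is at $W^{-s}$-distance $O(1)$ from the space of invariant distributions, because its pairing against any coboundary $Xu$ telescopes. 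Either way, the $L^2$-orthogonal complement of $\mathcal I^s(M)$ is not the right complement: it is not $\rho_\R$-invariant, so the Oseledets estimate on $\rho_{t_0}|_{\mathcal I^s\ominus\C\omega}$ does not pass to the projection of $\gamma^{(1)}_x$ along a non-invariant splitting without an extra argument.

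A second problem is the claim in Step~1 that $\Vert\gamma^{(1)}_x\Vert_{W^{-s}_{a_{t_0}(X,Y,Z)}}$ is bounded ``uniformly in $x$ and in the frame.'' This is false: the embedding constant for $W^s_{X,Y,Z}\hookrightarrow C^0$ degenerates as the renormalized frame escapes into the cusp of $\mathcal M$, which a generic $a_\R$-orbit does along a positive-density set of times. The correct statement, which you acknowledge only in passing in Step~3, is that this quantity is tempered along Oseledets-generic orbits and its sub-exponential fluctuation is absorbed into the $\epsilon$; this should replace, not coexist with, the uniformity claim. With these two corrections the outline would be sound.
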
 

In the spirit of these notes, we can derive from Theorem~\ref{thm:Heis_Lyap} a result on the Lyapunov spectrum
of $\rho_\R$ on the bundle $\mathcal I^s_k(M)$ of iterated invariant distribution, for all $k\in \N$, and on the bundle $\mathcal I^s_\infty(M)$. For instance,  let $\mathcal I_\infty(M)  $ denote 
the subbundle with fiber
$$
\mathcal I_{X,\infty}(M) = \bigcup_{k\in \N} \{D\in \mathcal E'(M) \vert  X^k D =0\}\,
$$
\begin{theorem} 
\label{thm:Heis_Lyap_iterated}
The Lyapunov spectrum of the cocycle $\rho_\R \vert \mathcal I_\infty (M)$ (with respect to any probability $a_\R$-invariant measure  on the moduli space $\mathcal M$) consists of the set
$$
\{1\} \cup \{ 1/2- k \vert k\in \N\}\,.
$$
The Lyapunov exponent $1$ is simple and corresponds to the subbundle of $\mathcal I_\infty (M)$ given by the invariant volume on $M$, while all the Lyapunov exponents $1/2-k$ have infinite multiplicity.
\end{theorem}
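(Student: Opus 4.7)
The plan is to bootstrap from Theorem~\ref{thm:Heis_Lyap} (the $k=1$ case) via an induction along the filtration $\mathcal{I}_1^s(M)\subset\mathcal{I}_2^s(M)\subset\cdots$, using $\mathcal{L}_X$ as an annihilation operator that descends the filtration and shifts Lyapunov exponents by $+1$, in close analogy with the pseudo-Anosov Lemma in Section~\ref{sec:pseudoAnosov}. The starting observation is that under the renormalization $a_t$ the Heisenberg frame rescales as $(X,Y,Z)\mapsto(e^tX,e^{-t}Y,Z)$; hence the filtration is $\rho_\R$-invariant and one has the intertwining $\rho_t\circ\mathcal{L}_X = e^t\,\mathcal{L}_X\circ\rho_t$.

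The heart of the argument is to identify the successive graded pieces. I would verify, using the fact that the distributional equation $XD'=D$ is solvable as soon as $D(1)=0$, that for $k\ge 2$ the operator $\mathcal{L}_X$ induces an isomorphism of $\rho_\R$-cocycles
$$
\mathcal{L}_X :\ \mathcal{I}_{k+1}^s(M)/\mathcal{I}_k^s(M)\ \xrightarrow{\;\sim\;}\ \mathcal{I}_k^{s+1}(M)/\mathcal{I}_{k-1}^{s+1}(M),
$$
whereas at the base level $k=1$ the isomorphism is only onto $\mathcal{I}_1^{s+1}(M)/\C\omega$, where $\omega$ is the invariant volume (the unique $X$-invariant distribution in the trivial central-isotypical component $z=0$, and not an $X$-coboundary even in the distributional sense because it fails to annihilate constants). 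Injectivity is immediate from the definition of the filtration, and surjectivity is the statement that every class in the target admits a zero-integral representative; this is automatic for $k\ge 2$ because $\C\omega\subset\mathcal{I}_1\subset\mathcal{I}_{k-1}$, and fails by exactly one dimension at $k=1$.

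Combining these isomorphisms with the shift intertwining yields
$$
\sigma\bigl(\rho_\R\,|\,\mathcal{I}_{k+1}^s/\mathcal{I}_k^s\bigr)\ =\ \sigma\bigl(\rho_\R\,|\,\mathcal{I}_k^{s+1}/\mathcal{I}_{k-1}^{s+1}\bigr)\,-\,1
$$
for $k\ge 2$, with $\C\omega$ removed at the base. Theorem~\ref{thm:Heis_Lyap} assigns to $\mathcal{I}_1/\C\omega$ the single Lyapunov exponent $1/2$ of infinite multiplicity, so the induction produces the exponent $1/2-k$ of infinite multiplicity on each graded piece $\mathcal{I}_{k+1}/\mathcal{I}_k$. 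Passing to the direct limit over $k$ gives the full spectrum $\{1\}\cup\{1/2-k:k\in\N\}$, with the exponent $1$ simple and carried exactly by the invariant-volume sub-bundle $\C\omega$.

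The main obstacle I expect is not algebraic but analytic: one has to verify that the cocycle $\rho_\R$ restricted to the Hilbert sub-bundles $\mathcal{I}_k^s(M)$ genuinely admits an Oseledets decomposition compatible with the filtration, so that the graded spectral analysis above lifts to a true infinite-dimensional Lyapunov spectrum despite each exponent having infinite multiplicity. The natural route is to refine the proof of Theorem~\ref{thm:Heis_Lyap} in~\cite{FF06}, which is already carried out representation by representation using the central circle action, so as to treat each non-trivial central isotypical component $\mathcal{I}^z$ ($z\in\Z\setminus\{0\}$) as a finite-dimensional flag cocycle on which the multiplicative ergodic theorem applies unambiguously, and then to assemble these finite-dimensional pieces into an Oseledets decomposition of the full bundle.
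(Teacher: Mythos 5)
The paper does not actually write out a proof of this theorem: after stating it, the text only remarks that it ``generalizes Theorem~\ref{thm:RRes_Heis} from periodic orbits to all probability invariant measures and can be derived from Theorem~\ref{thm:Heis_Lyap} by a similar argument.'' Your proposal is therefore best read as a reconstruction of that omitted ``similar argument,'' and in structure it matches exactly the scheme the author does spell out in Section~\ref{sec:pseudoAnosov} for pseudo-Anosov maps (the lemma computing $\sigma_{\mathcal I_{X,k}(M)}(\Phi_*)$ by climbing the filtration with $\mathcal L_X$ and $\mathcal L_Y$). The ingredients you isolate — invariance of the filtration $\mathcal I_1\subset\mathcal I_2\subset\cdots$ under $\rho_\R$, the graded isomorphisms induced by $\mathcal L_X$ (with the one-dimensional defect $\C\omega$ appearing only at the bottom), the exponent shift coming from the intertwining, and the base case supplied by Theorem~\ref{thm:Heis_Lyap} — are precisely the right ones, and your account of where surjectivity comes from ($XD'=D$ is solvable inside each irreducible component, while $\omega$ obstructs it in the trivial $z=0$ component) is correct.

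Two points deserve more care than you give them. First, the sign in the intertwining $\rho_t\circ\mathcal L_X=e^t\,\mathcal L_X\circ\rho_t$ is the crux of the whole computation, since it determines whether the graded spectrum is $1/2-k$ (as claimed) or $1/2+k$. You present it as following immediately from $a_t(X)=e^tX$, but the naive computation with the ``identity'' cocycle of the paper's definition reads $\mathcal L_{e^tX}(\rho_t D)=e^t\rho_t(\mathcal L_X D)$, i.e.\ $\mathcal L_X\circ\rho_t=e^t\rho_t\circ\mathcal L_X$, which is the \emph{opposite} intertwining. What actually fixes the sign is the normalization of $\rho_\R$ used in \cite{FF06} (the one under which $\omega$ has exponent $1$ and a distribution $D$ with $\Phi_*D=\nu D$ at a period-$T$ orbit has exponent $1+\tfrac{\log\nu}{T}$); with that normalization the Ruelle eigenvalues $u_{z,i}\lambda^{-k-1/2}$ of Theorem~\ref{thm:RRes_Heis} translate into exponents $1/2-k$ and your intertwining sign is the right one — but you should verify this against the conventions of \cite{FF06} or against Lemma~\ref{lemma:Heis_scaling} rather than assert it. Second, your final paragraph correctly identifies the analytic gap: one needs a multiplicative ergodic theorem for $\rho_\R$ on an infinite-dimensional bundle with infinite-multiplicity exponents, and the route you propose (use the central circle action to reduce to a direct sum of finite-dimensional cocycles on the non-trivial isotypical components $H_z$, where $\dim(\mathcal I_{X,k}\cap H_z)=k|z|<\infty$, then assemble) is exactly how the proof of Theorem~\ref{thm:Heis_Lyap} in \cite{FF06} is organized, so it is the natural way to close the gap. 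With those two points addressed, the proposal is essentially the argument the paper has in mind.
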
 
The above theorem generalizes  Theorem~\ref{thm:RRes_Heis} from periodic orbits to all probability 
invariant measures and can be derived from Theorem~\ref{thm:Heis_Lyap} by a similar argument.

We conclude with the following
\begin{problem}  Prove a version of the above theorem for a transfer cocycle over a bundle of anisotropic currents
over the renormalization flow on the moduli space of Heisenberg manifolds.
\end{problem}




\begin{thebibliography}{20}

\bibitem[Ad]{Ad} A.~Adam, Transfer operators and horocycle averages on closed manifolds, preprint, 
arXiv:1809.04062.
\bibitem[AV07]{AV07} A.~Avila and M.~Viana, Simplicity of Lyapunov spectra: proof of the Zorich-Kontsevich conjecture, {\it Acta Math.}~\textbf{198} (1),  1-56.
\bibitem[Bl16]{Bl16} A.~Blumenthal, A volume-based approach to the multiplicative ergodic theorem on Banach spaces, {\it Discrete and Continuous Dynamical Systems A} \textbf{36} (5)  (2016), 2377--2403.
\bibitem[Bu14]{Bu14} 
A.~I.~Bufetov, Limit theorems for translation flows, {\it Ann. of Math.} \textbf{179} (2014), 431--499 .
\bibitem[BuF14]{BuF14} 
A.~I.~Bufetov and G. Forni,  Limit theorems for horocycle flows,  {\it Ann. Sci. ENS} \textbf{47} (5) (2014), 851--903.
\bibitem[BS]{BS} O.~Butterley and L.~D.~Simonelli,  Parabolic Flows Renormalized by Partially Hyperbolic Maps, {\it Boll. Unione Mat. Ital.} (2020). https://doi.org/10.1007/s40574-020-00235-8.
\bibitem[DFG]{DFG}  S.~Dyatlov, F.~ Faure and C.~Guillarmou,  Power spectrum of the geodesic flow on hyperbolic manifolds, {\it Anal. PDE} \textbf{8} (4) (2015), 923--1000.
\bibitem[Fau07]{Fau07} F.~Faure, Prequantum chaos: Resonances of the prequantum cat map, {\it J. Mod. Dynam.}  {\bf 1} (2)  (2007), 255--285.
\bibitem[FGL]{FGL} F.~Faure, S.~Gou\"ezel \& E.~Lanneau,  Ruelle spectrum of linear pseudo-Anosov maps, {\it Journal de l'\'Ecole polytechnique--Math\'ematiques} \textbf{6} (2019), 811--877.
\bibitem[FG18]{FG18} F.~Faure and C.~Guillarmou, Horocyclic invariance of Ruelle resonant states for contact Anosov flows in dimension $3$, {\it Math. Res. Lett.}
\textbf{25} (5) (2018), 1405 --1427.
\bibitem[FT15]{FT15} F.~Faure and M.~Tsujii, Prequantum Transfer Operator for Symplectic Anosov Diffeomorphism, {\it Ast\'erisque} {\bf 375} (2015), 1--222.
\bibitem[FF03]{FF03}
L.~Flaminio and G.~Forni, Solutions of the cohomological equation and 
deviation of ergodic averages for horocycle flows,
{\it Duke Math. J.} \textbf {119} (2003), 465--526.
 \bibitem[FF06]{FF06} L.~Flaminio and G.~Forni, Equidistribution of nilflows and
applications to theta sums, {\it Erg.
Th. Dynam. Sys.}, {\bf 26}(02) (2006), 409-433.
 \bibitem[FF07]{FF07} L.~Flaminio and G.~Forni,  The cohomological equation for nilflows,   {\it J. Mod. Dynam.} {\bf 1} (2007), 37-60.
\bibitem[F97]{F97}
G.~Forni, Solutions of the cohomology equation for area-preserving flows
on compact Riemann surfaces, {\it Ann. of Math.} \textbf{146} (1997), 295--344.
\bibitem[F02]{F02}  G.~Forni, Deviation of Ergodic Averages for Area-Preserving Flows on Surfaces of Higher Genus, {\it Annals of Mathematics} \textbf{155} (1), (2002), 1--103.
\bibitem[F07]{F07}  G.~Forni, Sobolev regularity of solutions of the cohomological equation, preprint,  {\it Ergodic Theory and Dynamical Systems}, 1-105. doi:10.1017/etds.2019.108.
\bibitem [F11]{F11}  G.~Forni, A geometric criterion for the non-uniform hyperbolicity of the Kontsevich--Zorich cocycle, {\it J. Mod. Dynam.} \textbf{5} (2) (2011), 355--395. 
\bibitem [F15]{F15}  G.~Forni, Limit Theorems for Horocycle Flows,  in Hyperbolic Dynamics, Fluctuations and Large Deviations
(edited by D.~Dolgopyat, Y.~Pesin, M.~Pollicott and L.~Stoyanov),  {\it Proceedings of Symposia in Pure Mathematics}, 
Vol. {\bf 89},  2015,  121--160. 
\bibitem[F14]{F14} G.~Forni,    Effective Equidistribution of nilflows and bounds on Weyl sums,   in {\it Dynamics and Analytic Number Theory},  Proceedings of the Durham Easter School 2014. Editors: D.~Badziahin, A.~Gorodnik, N.~Peyerimhoff, T.~Ward.  London Mathematical Society Lecture Note Series {\bf 437}. Cambridge University Press.
\bibitem[F]{F} G.~Forni, On the equidistribution of unstable curves for pseudo-Anosov diffeomorphisms of compact surfaces, preprint. 
\bibitem[FMZ12]{FMZ12} G.~Forni, C.~Matheus \& A.~Zorich,  Lyapunov spectrum of invariant sub-bundles of the Hodge bundle,  {\it Erg. Th. Dynam. Sys} \textbf{34} (2), (2012),
 353--408.
 \bibitem[GL]{GL} P.~Giulietti and C.~Liverani,  Parabolic dynamics and Anisotropic Banach spaces, 
 {\it J. Europ. Math. Soc.} \textbf{21} (9) (2019), 2793--2858. Published online: 2019-05-20
 DOI: 10.4171/JEMS/892.
 \bibitem[GouL06]{GouL06} S.~Gou\"ezel and C.~Liverani,  Banach spaces adapted to Anosov systems,
{\it Erg. Th. Dynam. Sys.} \textbf{26} (2006), 189--217.
 \bibitem[GQ15]{GQ15} C.~Gonz\'alez-Tokman and A. Quas,  A concise proof of the Multiplicative Ergodic Theorem on Banach spaces,  {\it J. Mod. Dynam.} \textbf{9} (2015), 237--255.
 \bibitem[KK96]{KK96} A.~Katok and A.~Kononenko, Cocycles' stability for partially hyperbolic systems. 
 {\it Math. Res. Lett.} \textbf{3}, no. 2, (1996), 191--210.
 \bibitem[KZ97]{KZ97} M. Kontsevich and A. Zorich,  Lyapunov exponents and Hodge theory,  
 arXiv:hep-th/9701164v1 .
 \bibitem[MMY05]{MMY05}  S. Marmi, P. Moussa and J.-C. Yoccoz, The cohomological equation for Roth-type interval exchange maps,  {\it J.~Amer. Math. Soc.} \textbf{18} (2005), 823--872.
  \bibitem[MMY12]{MMY12}  S. Marmi, P. Moussa and J.-C.~Yoccoz, Linearization of generalized interval exchange maps, {\it Ann. of Math.}  \textbf{176} (2012), 1583--1646.
   \bibitem[MY16]{MY16}  S. Marmi and J.-C. Yoccoz,  H\"older regularity of the solutions of the cohomological equation for Roth-type interval exchange maps,  {\it Comm. Math. Phys.}  \textbf{344} (2016), 117--139.
 \bibitem[V86]{V86} W. Veech, The Teichm\"uller geodesic flow,  {\it Annals of Mathematics}  \textbf{124} (3) (1986),
 441-530.
\bibitem[Zo97] {Zo97}  A.~Zorich,  Deviation for interval exchange transformations,  {\it Erg. Th. Dynam. 
Sys.} \textbf{17} (6) (1997), 1477--1499.
 \bibitem[W13]{W13} A.~Wilkinson, The cohomological equation for partially hyperbolic diffeomorphisms,
 {\it Ast\'erisque} \textbf{358} (2013), 75--165.

\end{thebibliography}
 \end{document}